\newcommand{\bR}{\mathbb{R}}
\newcommand{\bZ}{\mathbb{Z}}
\newtheorem{theorem}{Theorem}[section]
\newtheorem{lemma}[theorem]{Lemma}
\newtheorem{proposition}[theorem]{Proposition}
\newtheorem{corollary}[theorem]{Corollary}
\newtheorem*{corollary*}{Corollary}
\newtheorem{atheorem}{Theorem}
\theoremstyle{definition}
\theoremstyle{remark}
\newtheorem{example}[theorem]{Example}
\newtheorem{remark}[theorem]{Remark}
\newcommand{\mr}[1]{{\rm #1}}
\newcommand{\lra}{\longrightarrow}
\DeclareMathOperator*{\colim}{colim}
\newcommand{\fr}{\mr{fr}}
\newcommand{\hcoker}{/\!\!/}
\title{Framings of $W_{g,1}$}
 \author{Alexander Kupers}
\email{a.kupers@utoronto.ca}
\address{Department of Computer and Mathematical Sciences \\
	University of Toronto Scarborough \\
	1265 Military Trail \\
	Toronto, ON M1C 1A4 \\ Canada}
 \author{Oscar Randal-Williams}
 \email{o.randal-williams@dpmms.cam.ac.uk}
 \address{Centre for Mathematical Sciences\\
 Wilberforce Road\\
 Cambridge CB3 0WB\\
 UK}
\date{\today}
\begin{document}

\begin{abstract}We compute the set of framings of $W_{g,1} = D^{2n} \# (S^n \times S^n)^{\# g}$, up to homotopy and diffeomorphism relative to the boundary.\end{abstract}
		\subjclass[2010]{57R15, 11F75, 55R40, 57S05}

\maketitle

\tableofcontents

\vspace{-1cm}

\section{Introduction} 

Closed connected orientable surfaces do not admit a framing unless they have genus 1, but compact connected orientable surfaces of any genus with \emph{non-empty} boundary do admit framings and there has been recent interest in understanding the set of such framings up to homotopy and diffeomorphism and, relatedly, the stabilisers of framings with respect to the action of the mapping class group \cite{RWFramed, KawazumiFramings, CalderonSalter, CalderonSalter2, CuadradoSalter}.

The analogues  in higher dimensions of genus $g$ surfaces with one boundary component are the $2n$-manifolds 
\[W_{g,1} \coloneqq D^{2n} \# (S^n \times S^n)^{\# g},\]
which play a distinguished role in the study of diffeomorphism groups of $2n$-manifolds via homological stability \cite{grwstab1, grwstab2}. These manifolds also admit framings, and all framings of $W_{g,1}$ induce the same homotopy class of framing of $TW_{g,1}\vert_{\partial W_{g,1}}$, see Lemma \ref{lem:SpaceThetaStr}. Fixing once and for all a framing $\ell_\partial$ of $TW_{g,1}\vert_{\partial W_{g,1}}$ in this homotopy class, in our work on Torelli groups and diffeomorphism groups of discs \cite{KR-WAlg, KR-WDisks} we have needed to study the moduli spaces of framed manifolds diffeomorphic to $W_{g,1}$ and with boundary condition $\ell_\partial$. The set of path components of this space is the set of orbits for the action of the mapping class group $\pi_0(\mr{Diff}_\partial(W_{g,1}))$ on the set $\mathrm{Str}_\partial^{\fr}(W_{g,1})$ of homotopy classes of framings of $W_{g,1}$ extending $\ell_\partial$. In that work we could get away with qualitative information about this set of path components: that it is finite. In this note we precisely determine it.

\begin{atheorem}\label{thm:main} Let $g \geq 2$ and $n \geq 1$. The action of the mapping class group $\pi_0(\mr{Diff}_\partial(W_{g,1}))$ on the set $\mathrm{Str}_\partial^{\fr}(W_{g,1})$ of homotopy classes of framings extending $\ell_\partial$ has
	\begin{enumerate}[(i)]
		\item two orbits if $n=1, 3,7$ or $n \equiv 0 \pmod 4$; 
		\item one orbit if $n \neq 1,3,7$ and $n \not\equiv 0 \pmod 4$.
	\end{enumerate}
	If $n > 1$ then in fact these hold for $g \geq 1$. 
\end{atheorem}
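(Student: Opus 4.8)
The plan is to first realise $\mathrm{Str}^{\fr}_\partial(W_{g,1})$ as a torsor and then reduce the orbit count to arithmetic. Since $W_{g,1}$ is parallelisable and two framings rel $\partial$ differ by a gauge transformation that is trivial on the boundary, fixing one framing identifies $\mathrm{Str}^{\fr}_\partial(W_{g,1})$ with the based homotopy classes $[W_{g,1}/\partial W_{g,1}, \mathrm{SO}(2n)]$. As $W_{g,1}/\partial W_{g,1} \simeq W_g := \#_g(S^n \times S^n)$, I would compute this group from the cofibre sequence $\bigvee_{2g} S^n \to W_g \to S^{2n}$, whose attaching map is a sum of Whitehead products $\sum_i [a_i,b_i]$. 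Because $\mathrm{SO}(2n)$ is a homotopy-associative H-space, all Whitehead products into it vanish, so this map and its suspension induce zero and the Puppe sequence collapses to a short exact sequence of groups
\[0 \lra \pi_{2n}(\mathrm{SO}(2n)) \lra [W_g, \mathrm{SO}(2n)] \lra \mathrm{Hom}(H_n(W_g), \pi_n(\mathrm{SO}(2n))) \lra 0.\]
For $n \geq 2$ the right-hand group is stable, $\pi_n(\mathrm{SO}(2n)) = \pi_n(\mathrm{SO})$, whereas the kernel $\pi_{2n}(\mathrm{SO}(2n))$ is genuinely unstable. The dimension $n=1$ is already exceptional here, since $\pi_1(\mathrm{SO}(2)) = \bZ$ is unstable; I would treat it separately, recovering the classical count for framings of surfaces.

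Next I would analyse the action of $\Gamma := \pi_0(\mr{Diff}_\partial(W_{g,1}))$, which is \emph{affine}: writing $\phi \cdot x = \rho(\phi)\,x + c(\phi)$, the linear part $\rho$ acts on the quotient through the standard action on $H_n$, and the cocycle $c(\phi)$ records how $\phi$ twists the reference framing. By the results of Wall and Kreck, $\Gamma$ surjects onto the isometry group of the relevant quadratic refinement of the intersection form on $H_n$ — a symplectic group for $n$ odd and an orthogonal group for $n$ even — so $\rho$ is as large as the arithmetic allows. I would then repackage a framing as a quadratic refinement $q_\ell : H_n \to \pi_n(\mathrm{SO})$ of the intersection form together with a top-cell class in $\pi_{2n}(\mathrm{SO}(2n))$, turning the orbit problem into one about arithmetic groups acting on these data.

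The count then splits into two independent mechanisms, and the total number of orbits is the product of their contributions. The \emph{top-dimensional, stable} mechanism is the image of the top-cell class in $\pi_{2n}(\mathrm{SO})$, which is a $\Gamma$-invariant; by Bott periodicity $\pi_{2n}(\mathrm{SO}) = \bZ/2$ exactly when $n \equiv 0 \pmod 4$ and vanishes otherwise, and I would show this invariant is realised, giving a second orbit precisely when $4 \mid n$. The second is an \emph{Arf-type} obstruction carried by the quadratic refinement, which I claim is nontrivial precisely when the Whitehead square $[\iota_n,\iota_n] \in \pi_{2n-1}(S^n)$ vanishes, i.e. when $S^n$ is parallelisable ($n = 1,3,7$, by Adams' solution of the Hopf invariant one problem); the surface case $n=1$ serves as the anchoring example. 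In those dimensions the reduction of $q_\ell$ carries an Arf invariant that survives the affine action and doubles the number of orbits, while for all other $n$ the affine action is transitive on the refinements and contributes a single orbit. Since $\{1,3,7\}$ and $4\bZ$ are disjoint, the two special conditions are mutually exclusive, so the product is always $1$ or $2$, as stated.

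The main obstacle, and the bulk of the work, is the precise determination of the affine structure together with the unstable homotopy input. Concretely I expect the hard steps to be: computing the cocycle $c$, i.e. understanding how the geometric generators of $\Gamma$ (and diffeomorphisms supported in a disc) act on framings — this is what collapses the naive, infinitely-many divisibility orbits when $\pi_n(\mathrm{SO}) = \bZ$ and forces transitivity modulo the stated invariants; and pinning down $\pi_{2n}(\mathrm{SO}(2n))$ together with the image of the self-intersection correction, so as to verify that the top cell creates no orbits beyond those already counted. Establishing transitivity on each invariant-level uses the full force of the Wall--Kreck description of $\Gamma$, while the clean answer $\{1,3,7\} \cup 4\bZ$ ultimately rests on Adams' theorem and Bott periodicity; the low-genus caveat (that $g \geq 2$ is needed only when $n=1$) reflects the requirement that the relevant arithmetic group be large enough for these orbit computations to stabilise.
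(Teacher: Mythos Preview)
Your overall architecture matches the paper's: the same short exact sequence
\[0 \to \pi_{2n}(\mathrm{SO}(2n)) \to \mathrm{Str}^{\fr}_\partial(W_{g,1}) \to \mathrm{Hom}(H_n,\pi_n(\mathrm{SO}(2n))) \to 0,\]
the same affine action, the Arf invariant accounting for the extra orbit when $n=1,3,7$, and even the identification of the image in $\pi_{2n}(\mathrm{SO})$ as the residual invariant when $n\equiv 0\pmod 4$ (this is the paper's Remark~5.2). So the skeleton is right.

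The genuine gap is the transitivity step on the top cell, which you flag as ``the bulk of the work'' but for which you propose no viable method. You need to show that the top-cell derivative map
\[\Gamma_g^{\fr,[[\ell]]} \lra \pi_{2n}(\mathrm{SO}(2n))\]
has image of order at least~$4$ (hence surjective for $n\not\equiv 0\pmod 4$, and of index~$2$ for $n\equiv 0\pmod 4$). Your suggestion to compute the cocycle on Kreck's generators directly does not work: diffeomorphisms supported in a disc have \emph{trivial} derivative (Lemma~2.2 of the paper, from Burghelea--Lashof), so they contribute nothing; and the handle-twist generators $\mathrm{Hom}(H_n,S\pi_n(\mathrm{SO}(n)))$ act by translation on the $\mathrm{Hom}$-quotient but their effect on the $\pi_{2n}(\mathrm{SO}(2n))$ part is precisely the unstable quantity you cannot read off directly. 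The paper gets the required lower bound by an entirely indirect route: it shows (Lemma~3.5) that $H_1(G_\infty^{\fr,[\ell]})=0$ using the Galatius--Randal-Williams computation of $H_1(B\mathrm{Diff}^{\fr}_\partial(W_{g,1}))\cong\pi_{2n+1}^s$, smoothing theory to identify $\check{\Gamma}_0^{\fr,\ell}\cong\pi_{2n+1}(\mathrm{Top}(2n))$, the vanishing of odd $L$-groups, Kirby--Siebenmann, and Paechter's calculation of $\pi_{2n+1}(\mathrm{O},\mathrm{O}(2n))$. This forces $G_g^{\fr,[[\ell]]}/G_g^{\fr,[\ell]}$ to surject onto $H_1(G_\infty^{\fr,[[\ell]]})$, which has order~$4$. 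Nothing in your proposal substitutes for this input.

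Two smaller points. First, your assertion that the stable class in $\pi_{2n}(\mathrm{SO})$ is $\Gamma$-invariant is correct but not free: it is equivalent to saying the stable derivative of every $\phi\in\mathrm{Diff}_\partial(W_{g,1})$ has trivial top-cell class, which the paper proves (Section~5.1) using results from \cite{grwabelian} on stable framings. Second, your ``quadratic refinement $q_\ell:H_n\to\pi_n(\mathrm{SO})$'' conflates two different objects: the element of $\mathrm{Hom}(H_n,\pi_n(\mathrm{SO}(2n)))$ recording the framing on the $n$-skeleton, and the $\bZ/2$-valued self-intersection form $\mu^{\fr}$ of Section~2.5. Only the latter carries an Arf invariant, and it is an invariant of the framing only for $n=1,3,7$; for other odd $n$ it is canonical and explains why $G'_g=\mathrm{Sp}^q_{2g}(\bZ)$ rather than $\mathrm{Sp}_{2g}(\bZ)$.
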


While Theorem \ref{thm:main} is quite simple to formulate, our main interest is not so much in this statement but rather in related results concerning the stabilisers of framings with respect to the action of the mapping class group, especially in dimensions $2n \geq 6$. These results require substantial background to formulate, and we leave them to the body of the text: the main results in this direction are Propositions \ref{prop:framings-rel-point} and \ref{prop:IdentifyingDoubleBracketEll}, and Corollary \ref{cor:IdentifyingSingleBracketEll}. In Section \ref{sec:ThetaStr} we explain how highly-connected tangential structures can be analysed similarly.

\begin{remark}
The exceptions in Theorem \ref{thm:main} are covered by the following:
\begin{enumerate}[(i)]
\item The cases $g=0$ are as in Table \ref{tab.pi2no}. (This uses that framings of $D^d$ up to homotopy and diffeomorphism, relative to a fixed boundary condition, are in bijection with $\pi_d(\mr{O}(d))$, and that the diffeomorphisms of the disc act trivially on them by Lemma \ref{lem:ThetaActsTriv}.)

\item The case $n=1$ and $g=1$ is given in \cite[Theorem 3.12]{KawazumiFramings}; it is rather complicated.
\end{enumerate}
The case $n=1$ and $g \geq 2$ is contained in \cite[Theorem 2.9]{RWFramed}. We shall nonetheless continue discussing this case, to draw parallels between it and the high-dimensional case which is our main focus.
\end{remark}

\subsection*{Acknowledgements}
This work was supported by the European Research Council under the European Union's Horizon 2020 research and innovation programme [grant number No.\ 756444 to ORW]; by a Philip Leverhulme Prize from the Leverhulme Trust [to ORW]; and by the National Science Foundation [grant number DMS-1803766 to AK].

\section{Recollections and generalities}\label{sec:General}
In this section we recall some notation and results from \cite{KR-WAlg}, in particular Section 8 of that paper, specialized to framings. 

	\subsection{The mapping class group}
	
	The \emph{mapping class group} of $W_{g,1}$ is defined as
	\[\Gamma_g \coloneqq \pi_0(\mr{Diff}_\partial(W_{g,1})) = \pi_1(B\mr{Diff}_\partial(W_{g,1})),\]
where $\mr{Diff}_\partial(W_{g,1})$ is the topological group of diffeomorphisms fixing a neighborhood of the boundary pointwise, in the $C^\infty$-topology. Let us write
\[H_n \coloneqq H_n(W_{g,1};\bZ),\]
which has an action of the mapping class group. In high dimensions we will use an analysis of $\Gamma_g$ due to Kreck \cite{kreckisotopy} (see that paper for details about the definitions of the homomorphisms between the terms; to apply it in this case see \cite[Lemma 1.5]{grwabelian}):

\begin{theorem}[Kreck] \label{thm:kreck} For $2n \geq 6$, the mapping class group $\Gamma_g \coloneqq \pi_0(\mr{Diff}_\partial(W_{g,1}))$ is described by the pair of extensions
	\[1 \lra I_{g} \lra \Gamma_g \lra G'_g \lra 1,\]
	\[1 \lra \Theta_{2n+1} \lra I_{g} \lra \mr{Hom}(H_n,S\pi_n(\mr{SO}(n))) \lra 1,\]
	with $S\pi_n(\mr{SO}(n)) \coloneqq \mr{im}(\pi_n(\mr{SO}(n)) \to \pi_n(\mr{SO}(n+1)))$, described in Table \ref{tab.spino}, and
	\[G'_g \coloneqq \begin{cases} \mr{Sp}_{2g}(\bZ) & \text{if $n$ is 3 or 7,} \\
	\mr{Sp}_{2g}^q(\bZ) & \text{if $n$ is odd but not 3 or 7,} \\
	\mr{O}_{g,g}(\bZ) & \text{if $n$ is even,}\end{cases}\]
	where $\mr{Sp}_{2g}^q(\bZ) \leq \mr{Sp}_{2g}(\bZ)$ denotes the proper subgroup of symplectic matrices which preserve the standard quadratic refinement $\mu_0$.
\end{theorem}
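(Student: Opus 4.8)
The final statement is Kreck's theorem \cite{kreckisotopy}, so I would follow his original analysis, which builds the two extensions by repeatedly pairing a secondary invariant with an identification of its kernel. The basic geometric input is that $W_{g,1}$ is built from $D^{2n}$ by attaching $2g$ handles of index $n$, so it is $(n-1)$-connected and parallelisable, with boundary the standard sphere $S^{2n-1}$ and with $H_n \cong \bZ^{2g}$ carrying the $(-1)^n$-symmetric unimodular intersection form $\lambda$, whose standard symplectic (resp.\ hyperbolic) basis is realised by the cores and cocores of the handles. Since $W_{g,1}$ is $(n-1)$-connected of dimension $2n$, Hurewicz gives $\pi_n(W_{g,1}) \cong H_n$, which I will use to pass between homology classes and homotopy classes of embedded spheres.

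The plan for the outer extension $1 \to I_g \to \Gamma_g \to G'_g \to 1$ is as follows. Any $f \in \Gamma_g$ preserves $\lambda$, giving $\Gamma_g \to \aut(H_n, \lambda)$, and the point is to refine the target using the tangential data. Representing $x \in H_n$ by an embedded sphere, the clutching function of its normal bundle lies in $\pi_{n-1}(\mr{SO}(n))$ and, after reduction, defines a quadratic refinement of $\lambda$ that $f$ must preserve. The three cases then come from analysing this refinement: for $n$ even one obtains the full integral orthogonal group $\mr{O}_{g,g}(\bZ)$ of the hyperbolic symmetric form; for $n$ odd the refinement is $\bZ/2$-valued and equals the standard $\mu_0$, cutting the image down to $\mr{Sp}_{2g}^q(\bZ)$, unless it vanishes identically, which happens exactly for $n = 3, 7$ (the odd $n \geq 3$ with $S^n$ parallelisable), where one recovers all of $\mr{Sp}_{2g}(\bZ)$. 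Surjectivity onto $G'_g$ is a realisation problem, which I would settle with Wall's theorem that every automorphism of the intersection form compatible with the normal structure is induced by a diffeomorphism of an $(n-1)$-connected $2n$-manifold.

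For the inner extension $1 \to \Theta_{2n+1} \to I_g \to \mr{Hom}(H_n, S\pi_n(\mr{SO}(n))) \to 1$ I would study $f$ acting trivially on $H_n$ via a variation construction. Fixing normally framed embedded spheres $e_1, \dots, e_{2g}$ representing a basis, $f \circ e_i$ is homotopic to $e_i$ because $f$ acts trivially on $\pi_n \cong H_n$, hence smoothly isotopic to $e_i$ by Haefliger's embedding theorem in this metastable range ($2n \geq 6$); the discrepancy of the two normal framings is then a class in $\pi_n(\mr{SO}(n))$ whose image in $S\pi_n(\mr{SO}(n)) = \mr{im}(\pi_n(\mr{SO}(n)) \to \pi_n(\mr{SO}(n+1)))$ is well defined and independent of choices. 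This furnishes the homomorphism $I_g \to \mr{Hom}(H_n, S\pi_n(\mr{SO}(n)))$, and I would prove surjectivity by exhibiting explicit twist diffeomorphisms supported in tubular neighbourhoods of the $e_i$. The kernel consists of diffeomorphisms isotopic to one supported in a disc, and extension by the identity identifies it with $\pi_0\mr{Diff}_\partial(D^{2n})$, which for $2n \geq 6$ is $\Theta_{2n+1}$ by Cerf's pseudoisotopy theorem together with the clutching identification $\pi_0\mr{Diff}_\partial(D^{2n}) \cong \Theta_{2n+1}$.

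The main obstacle is the pair of realisation statements---Wall's realisation of $G'_g$ and the surjectivity of the variation homomorphism---together with the precise bookkeeping of the quadratic refinement that separates the three regimes for $G'_g$. By contrast, checking that both maps are well-defined group homomorphisms, and identifying the innermost kernel with $\Theta_{2n+1}$, should be comparatively formal once Haefliger's and Cerf's theorems are in hand.
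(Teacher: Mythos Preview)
The paper does not give its own proof of this statement: Theorem~\ref{thm:kreck} is quoted as a result of Kreck, with references to \cite{kreckisotopy} for the argument and to \cite[Lemma~1.5]{grwabelian} for the specialisation to $W_{g,1}$. Your outline is an account of Kreck's original approach, which is exactly what the paper is invoking, so there is no alternative proof in the paper to compare against and your sketch is appropriate in scope.

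One point of phrasing deserves correction. You write that for odd $n$ the $\bZ/2$-valued refinement ``equals the standard $\mu_0$ \ldots\ unless it vanishes identically, which happens exactly for $n=3,7$.'' A quadratic refinement of a nondegenerate alternating form cannot vanish identically: even with $\mu(e_1)=\mu(f_1)=0$ one has $\mu(e_1+f_1)=\mu(e_1)+\mu(f_1)+\lambda(e_1,f_1)=1$. The correct dichotomy is that for $n=3,7$ the normal-bundle/self-intersection construction simply fails to produce a well-defined refinement on $H_n$. In the paper's formulation (Section~\ref{sec:AQuadForm} and Lemma~\ref{lem:Fig8}), the composite $\mu^{\fr}\circ i\colon \pi_n(\mr{SO}(2n))\to\bZ/2$ is surjective precisely when $n=3,7$, so $\mu^{\fr}$ does not descend from framed immersions to $H_n$. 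It is the \emph{absence} of a canonical refinement in those dimensions, not its vanishing, that permits the image in $\mr{Aut}(H_n,\lambda)$ to be the full symplectic group.
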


\begin{table}[h]
	\centering
	\caption{The abelian groups $S\pi_n(\mr{SO}(n))$ for $n \geq 1$, with the exceptions that $S\pi_1(\mr{SO}(1)) = 0$, $S\pi_2(\mr{SO}(2)) = 0$ and $S\pi_6(\mr{SO}(6)) = 0$.}
	\label{tab.spino}
	\begin{tabular}{lcccccccc}
		\toprule
		$n \pmod 8$ & 0          & 1          & 2 & 3     & 4 & 5 & 6 & 7     \\ \midrule
		$S\pi_n(\mr{SO}(n))$  & $(\bZ/2)^2$ & $\bZ/2$ & $\bZ/2$ & $\bZ$ & $\bZ/2$ & 0 & $\bZ/2$ & $\bZ$ \\ \bottomrule
	\end{tabular}
\end{table}

In particular, the homomorphism $\Gamma_g \to G'_g \subset \mr{GL}_{2g}(\bZ)$ is given by sending a diffeomorphism to the induced automorphism of $H_n$. Part of Kreck's theorem is the analysis of precisely which automorphisms of $H_n(W_{g,1};\bZ)$ arise from diffeomorphisms: they must preserve the intersection form, and for $n \neq 3,7$ odd they must also preserve a further quadratic refinement. We will discuss this in more detail in Section \ref{sec:AQuadForm}.

For $n=1,2$ acting on middle homology, and preserving the intersection form, gives homomorphisms
\[\Gamma_g \lra G_g' \coloneqq \begin{cases} \mr{Sp}_{2g}(\bZ) & \text{if $n$ is 1,} \\
	\mr{O}_{g,g}(\bZ) & \text{if $n$ is 2,}\end{cases}\]
	and these are still surjective: the first is folklore, the second is \cite[Theorem 2]{Wall4}.

The subgroup $\Theta_{2n+1} \leq \Gamma_g$ corresponds to $\pi_0(\mr{Diff}_\partial(D^{2n}))$. We will make use of the following well-known fact about diffeomorphisms of the disc.

\begin{lemma}\label{lem:ThetaActsTriv}
For $2n \geq 6$ the derivative map 
\[\Theta_{2n+1} = \pi_0(\mr{Diff}_\partial(D^{2n})) \lra \pi_0(\mr{Bun}_\partial(TD^{2n})) = \pi_{2n}(\mr{SO}(2n)).\]
is trivial.
\end{lemma}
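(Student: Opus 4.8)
The plan is to identify $\pi_0(\mr{Diff}_\partial(D^{2n}))$ with the group $\Theta_{2n+1}$ of homotopy spheres and to compute the derivative homomorphism $d\colon\Theta_{2n+1}\to\pi_{2n}(\mr{SO}(2n))$ by relating $[D\phi]$ to the tangent bundle of the homotopy sphere attached to $\phi$. Since $2n\ge 6$, Cerf's theorem gives an isomorphism $\pi_0(\mr{Diff}_\partial(D^{2n}))\cong\Theta_{2n+1}$, under which $\phi$ corresponds to the twisted double $\Sigma_\phi = D^{2n+1}\cup_{h_\phi}D^{2n+1}$, where $h_\phi\in\mr{Diff}^+(S^{2n})$ equals $\phi$ on the upper hemisphere $D^{2n}_+$ and the identity on the lower hemisphere. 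Using the standard framing of $TD^{2n}$, the class $[D\phi]\in\pi_{2n}(\mr{SO}(2n))$ is represented by the based map $x\mapsto D_x\phi$, which is $I$ near $\partial D^{2n}$; the goal is to show it is null-homotopic.

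First I would carry out the clutching computation. Covering $\Sigma_\phi$ by its two discs and trivialising $T\Sigma_\phi$ over each, the clutching function over the equatorial $S^{2n}$ is the derivative of the gluing diffeomorphism $h_\phi$ acting on $TS^{2n}\oplus\nu\cong\epsilon^{2n+1}$. Since $h_\phi$ differs from the identity only on $D^{2n}_+$, where it equals $\phi$ and fixes the normal direction, this yields
\[[T\Sigma_\phi] = [TS^{2n+1}] + j_*[D\phi] \in \pi_{2n}(\mr{SO}(2n+1)),\]
with $j\colon\mr{SO}(2n)\hookrightarrow\mr{SO}(2n+1)$ the stabilisation (the conjugation by the varying frame-comparison on the contractible hemisphere is inner, hence invisible on homotopy groups). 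Thus $j_*[D\phi] = [T\Sigma_\phi]-[TS^{2n+1}]$.

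Next I would invoke the theorem of Bredon--Kosi\'nski that a homotopy sphere has the same span as the standard sphere; in particular $\Sigma_\phi$ is parallelisable if and only if $S^{2n+1}$ is. As $T\Sigma_\phi\oplus\epsilon^1$ and $TS^{2n+1}\oplus\epsilon^1$ are both trivial (stable parallelisability, already in the stable range after one stabilisation), the classes $[T\Sigma_\phi]$ and $[TS^{2n+1}]$ both lie in $\ker(\pi_{2n}(\mr{SO}(2n+1))\to\pi_{2n}(\mr{SO}(2n+2)))$, a cyclic group of order at most two generated by $[TS^{2n+1}]$; having the same parallelisability, they are equal. Hence $j_*[D\phi]=0$, so $[D\phi]$ lies in $\ker(j_*) = \mr{im}(\partial\colon\pi_{2n+1}(S^{2n})\to\pi_{2n}(\mr{SO}(2n)))$, and since $2n\ge 6$ we have $\pi_{2n+1}(S^{2n})\cong\bZ/2$.

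The main obstacle is the final step: ruling out this residual order-two ambiguity, i.e.\ showing $[D\phi]\neq\partial(\eta)$. The invariant $j_*$ is too coarse to detect it, so I would either exhibit a finer bundle-theoretic invariant refining the clutching computation, or verify the vanishing on a generating set of $\Theta_{2n+1}$ --- using that $d$ vanishes on $bP_{2n+2}$ by trivialising $T\Sigma_\phi$ over a bounding parallelisable manifold, and analysing the quotient $\Theta_{2n+1}/bP_{2n+2}\hookrightarrow\mr{coker}\,J$ separately. Making this last vanishing rigorous is where the real work lies; in the cases where $\pi_{2n}(\mr{SO}(2n))\to\pi_{2n}(\mr{SO}(2n+1))$ is already injective (so $\ker(j_*)=0$), the statement follows immediately from the steps above.
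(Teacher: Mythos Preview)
Your approach is quite different from the paper's: the paper gives a one-line proof by invoking Lemma~8.15 of the authors' earlier work (applied with $g=0$), which in turn deduces the result from Burghelea--Lashof. So the paper treats this as a known fact from the literature, whereas you attempt a direct geometric argument via the clutching function for $T\Sigma_\phi$.

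Your reduction to $[D\phi] \in \ker\big(j_*\colon \pi_{2n}(\mr{SO}(2n)) \to \pi_{2n}(\mr{SO}(2n+1))\big)$ is reasonable, but the gap you yourself flag in the final paragraph is genuine and is essentially the entire content of the lemma in the cases where $j_*$ has nontrivial kernel. Neither of your proposed fixes works as stated. If $\Sigma_\phi \in bP_{2n+2}$ bounds a parallelisable $W$, the induced trivialisation is only of $T\Sigma_\phi \oplus \epsilon^1$, which again controls nothing beyond the image in $\pi_{2n}(\mr{SO}(2n+1))$---precisely the information you already extracted from stable parallelisability and Bredon--Kosi\'nski. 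Likewise the identification of $\Theta_{2n+1}/bP_{2n+2}$ with a subgroup of $\mr{coker}\,J$ is a statement about \emph{stable} homotopy and does not detect the unstable class in $\pi_{2n}(\mr{SO}(2n))$. So the proposal as written does not close the gap, and the residual $\bZ/2$ indeterminacy is exactly what one must kill; the Burghelea--Lashof input used by the paper supplies a genuinely different argument (at the level of spaces rather than clutching data) that does this.
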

\begin{proof}
Apply \cite[Lemma 8.15]{KR-WAlg} with $g=0$, which deduces this from \cite{burglashof}.
\end{proof}

\subsection{Framings}\label{sec:framings} Tangential structures on $2n$-dimensional manifolds can be equivalently described as $\mr{GL}_{2n}(\bR)$-spaces, or as fibrations over $B\mr{O}(2n)$ \cite[Section 4.5]{grwsurvey}. In particular, framings can be described by the $\mr{GL}_{2n}(\bR)$-space given by $\Theta_\mr{fr} = \mr{GL}_{2n}(\bR)$ with action given by right multiplication, or by the fibration $\theta^\mr{fr} \colon E\mr{O}(2n) \to B\mr{O}(2n)$. Though we used the description in terms of the fibration $\theta^\mr{fr}$ in \cite{KR-WAlg,KR-WDisks}, here we find the former description more convenient; in \eqref{eq:FrTRivialised} we will get a homeomorphism rather than a homotopy equivalence. 

A \emph{framing} on $W_{g,1}$ is a map of $\mr{GL}_{2n}(\bR)$-spaces $\ell \colon \mr{Fr}(TW_{g,1}) \to \Theta_\mr{fr}$, with $\mr{Fr}(TW_{g,1})$ the frame bundle of the tangent bundle of $W_{g,1}$, which is a principal $\mr{GL}_{2n}(\bR)$-bundle. We shall fix a boundary condition $\ell_\partial \colon \mr{Fr}(TW_{g,1}|_{\partial W_{g,1}}) \to \Theta_\mr{fr}$ and only consider those $\theta$-structures which extend this boundary condition: the \emph{space of framings of $W_{g,1}$ extending $\ell_\partial$} is defined to be the space 
\[\mr{Bun}_\partial(\mr{Fr}(TW_{g,1}),\Theta_\mr{fr};\ell_\partial)\]
of $\mr{GL}_{2n}(\bR)$-equivariant maps $\mr{Fr}(TW_{g,1}) \to \Theta_\mr{fr}$ extending $\ell_\partial$. We write
\[\mr{Str}^\mr{fr}_\partial(W_{g,1}) \coloneqq \pi_0(\mr{Bun}_\partial(\mr{Fr}(TW_{g,1}),\Theta_\mr{fr}; \ell_\partial))\]
for its set of path components.

The manifold $W_{g,1}$ indeed admits a framing: viewing it as the boundary connect-sum of $g$ copies of the plumbing of $S^n \times D^n$ with itself, it is enough to note that $S^n \times D^n$ may be framed. The following is a special case of \cite[Lemma 8.5 (i)]{KR-WAlg}:

\begin{lemma}\label{lem:SpaceThetaStr}
Up to homotopy there is a unique orientation preserving boundary condition $\ell_\partial$ which extends to a framing $\ell$ on all of $W_{g,1}$.
\end{lemma}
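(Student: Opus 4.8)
The plan is to treat existence and uniqueness separately. Existence is already in hand: the framing of $W_{g,1}$ constructed just above restricts to an orientation-preserving boundary condition, so at least one extendable $\ell_\partial$ exists. For uniqueness I would show that \emph{any} two framings of $W_{g,1}$ restrict to homotopic framings of $TW_{g,1}|_{\partial W_{g,1}}$. Since the extendable orientation-preserving boundary conditions are, by definition, exactly the restrictions of global framings, this forces them all to lie in a single homotopy class, which is what the lemma asserts.

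\textbf{Reducing to a null-homotopy.} To compare two framings $\ell_0, \ell_1$ inducing the same orientation, I would form their difference. Using equivariance of the maps $\mr{Fr}(TW_{g,1}) \to \Theta_\mr{fr} = \mr{GL}_{2n}(\bR)$, the assignment $x \mapsto \ell_1(f)\ell_0(f)^{-1}$, for any frame $f$ over $x$, is independent of $f$ and defines a map $d \colon W_{g,1} \to \mr{GL}_{2n}(\bR)$; as both framings induce the same orientation, $d$ lands in the identity component $\mr{GL}_{2n}^+(\bR) \simeq \mr{SO}(2n)$. Then $\ell_1|_{\partial W_{g,1}}$ is homotopic to $\ell_0|_{\partial W_{g,1}}$ through framings precisely when $d|_{\partial W_{g,1}} \colon S^{2n-1} \to \mr{SO}(2n)$ is null-homotopic, so it suffices to prove the latter for every such $d$.

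\textbf{The homotopy type of the pair.} Here I would invoke the homotopy type of the pair $(W_{g,1}, \partial W_{g,1})$. The manifold $W_{g,1}$ is homotopy equivalent to $\bigvee_{2g} S^n$, with classes $a_1, b_1, \dots, a_g, b_g \in \pi_n(W_{g,1})$ coming from the core spheres. Since $W_{g,1}$ is the boundary connect-sum of $g$ copies of $(S^n \times S^n) \setminus \mr{int}(D^{2n})$, and the latter is homotopy equivalent to $S^n \vee S^n$ with its boundary sphere attached by the Whitehead product $[\iota_1,\iota_2]$ of the two factors (the top cell of $S^n \times S^n$), the inclusion $\partial W_{g,1} = S^{2n-1} \hookrightarrow W_{g,1}$ represents $\sum_{i=1}^g [a_i, b_i] \in \pi_{2n-1}(W_{g,1})$. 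Consequently $d|_{\partial W_{g,1}}$ represents $d_*\!\left(\sum_i [a_i,b_i]\right) = \sum_i [d_* a_i, d_* b_i] \in \pi_{2n-1}(\mr{SO}(2n))$. As $\mr{SO}(2n)$ is a Lie group, hence an $H$-space, all Whitehead products in its homotopy groups vanish, so this class is zero and $d|_{\partial W_{g,1}}$ is null-homotopic. This completes the uniqueness step, and with it the lemma.

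\textbf{Main obstacle.} The crux is the middle ingredient: identifying the boundary inclusion with the sum of Whitehead products $\sum_i [a_i,b_i]$. This is the one place where the geometry of $W_{g,1}$ — its handle/plumbing structure and the standard attaching map of the top cell of $S^n \times S^n$ — genuinely enters; everything else is formal once this is known. Equivalently, one may phrase the uniqueness argument as showing that the restriction map $[W_{g,1}, \mr{SO}(2n)] \to [S^{2n-1}, \mr{SO}(2n)]$ has trivial image, but the Whitehead-product description is what makes this vanishing transparent.
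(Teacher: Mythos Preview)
Your argument is correct. The paper does not give its own proof here but cites \cite[Lemma 8.5 (i)]{KR-WAlg}; the argument you have written---that the boundary inclusion represents $\sum_i [a_i,b_i]$ and that Whitehead products vanish in the $H$-space $\mr{SO}(2n)$---is precisely the one used there, and indeed the paper invokes this same Whitehead-bracket vanishing explicitly in Section~\ref{sec:Relaxing}.
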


For such a boundary condition $\ell_\partial$, a choice of reference framing $\tau$ provides an isomorphism $\mr{Fr}(TW_{g,1}) \cong W_{g,1} \times \mr{GL}_{2n}(\bR)$ of $\mr{GL}_{2n}(\bR)$-spaces and hence induces a homeomorphism
		\begin{equation}\label{eq:FrTRivialised}
		\mr{Bun}_\partial(\mr{Fr}(TW_{g,1}),\Theta_\mr{fr}; \ell_\partial) \overset{\cong}\lra \mr{map}_\partial(W_{g,1}, \mr{GL}_{2n}(\bR)),
		\end{equation}
		and therefore a bijection 
\[\mr{Str}^\mr{fr}_\partial(W_{g,1}) \overset{\sim}\lra \pi_0(\mr{map}_\partial(W_{g,1}, \mr{SO}(2n))),\]
though one must use it carefully as it depends on the choice of framing $\tau$.

The group $\mr{Diff}_\partial(W_{g,1})$ acts on the space of framings $\mr{Bun}_\partial(\mr{Fr}(TW_{g,1}),\Theta_\mr{fr}; \ell_\partial)$ by taking derivatives. In particular it acts through the topological monoid $\mr{Bun}_{\partial}(\mr{Fr}(TW_{g,1}))$ of $\mr{GL}_{2n}(\bR)$-maps $\mr{Fr}(TW_{g,1}) \to \mr{Fr}(TW_{g,1})$ which are the identity on the boundary. Using the reference framing $\tau$, the argument in \cite[Section 4]{KR-WAlg} provides a homeomorphism of topological monoids
\begin{equation}\label{eq:BunTrivialised}
\mr{Bun}_{\partial}(TW_{g,1}) \overset{\cong}\lra \mr{map}_{\partial}(W_{g,1},W_{g,1} \times \mr{GL}_{2n}(\bR))
\end{equation}
under which composition of bundle maps corresponds to the operation 
\[(f,\lambda) \circledast (g,\rho) \coloneqq (f \circ g,(\lambda \circ g) \cdot \rho),\]
with $\circ$ denoting composition of maps and $\cdot$ denoting pointwise multiplication. The right action of $\mr{Bun}_{\partial}(\mr{Fr}(TW_{g,1}))$ on $\mr{Bun}_\partial(\mr{Fr}(TW_{g,1}),\Theta_\mr{fr}; \ell_\partial)$ by precomposition then corresponds to
\begin{align*}
\mr{map}_\partial(W_{g,1}, \mr{GL}_{2n}(\bR)) \times \mr{map}_{\partial}(W_{g,1},W_{g,1} \times \mr{GL}_{2n}(\bR))  &\lra \mr{map}_\partial(W_{g,1}, \mr{GL}_{2n}(\bR))\\
(h, (f,\lambda)) &\longmapsto  (h \circ f) \cdot \lambda,
\end{align*}
under the homeomorphisms \eqref{eq:FrTRivialised} and \eqref{eq:BunTrivialised}, where $\cdot$ denotes the multiplication in $\mr{GL}_{2n}(\bR)$. We write $h \circledast (f,\lambda)$ for this operation.

Since $\partial W_{g,1} \to W_{g,1}$ is $0$-connected, all maps in the right hand side of \eqref{eq:BunTrivialised} have image in $W_{g,1} \times \mr{GL}^+_{2n}(\bR)$, with $\mr{GL}^+_{2n}(\bR) \leq \mr{GL}_{2n}(\bR)$ the path component of orientation-preserving invertible matrices. As the inclusion $\mr{SO}(2n) \hookrightarrow \mr{GL}^+_{2n}(\bR)$ is a homotopy equivalence, we phrase later computations in terms of the homotopy groups of $\mr{SO}(2n)$ rather than $\mr{GL}^+_{2n}(\bR)$.

\subsection{The moduli space of framed manifolds} \label{sec:moduli-spaces}

 The \emph{moduli space of framed manifolds diffeomorphic to $W_{g,1}$ relative to the boundary}, mentioned in the introduction, is defined to be the homotopy quotient
	\begin{equation}\label{eq:DefFrModuli}
	B\mr{Diff}^\mr{fr}_\partial(W_{g,1};\ell_\partial) \coloneqq \mr{Bun}_\partial(\mr{Fr}(TW_{g,1}),\Theta_\mr{fr}; \ell_\partial) \sslash \mr{Diff}_\partial(W_{g,1}).
	\end{equation}
There is a bijection
	\[\pi_0(B\mr{Diff}^\mr{fr}_\partial(W_{g,1};\ell_\partial)) \overset{\sim}\longleftarrow \mr{Str}^\mr{fr}_\partial(W_{g,1}) / \Gamma_g,\]
	and this is the set which Theorem \ref{thm:main} proposes to describe.
		
	For $[\ell] \in \mr{Str}^\mr{fr}_\partial(W_{g,1})$ we shall write
		\[\Gamma_g^{\fr,[\ell]} \coloneqq \mr{Stab}_{\Gamma_g}([\ell])\]
		for its stabiliser. 
		
		The \emph{framed mapping class group} of a framing $\ell \in \mr{Bun}_\partial(\mr{Fr}(TW_{g,1}),\Theta_\mr{fr}; \ell_\partial)$ is defined as
	\begin{equation*}
	\check{\Gamma}^{\fr, \ell}_g \coloneqq  \pi_1(B\mr{Diff}^\mr{fr}_\partial(W_{g,1};\ell_\partial), \ell),
	\end{equation*}
	and the long exact sequence for the homotopy orbits \eqref{eq:DefFrModuli} gives a surjection $\check{\Gamma}^{\fr, \ell}_g \to {\Gamma}^{\fr, [\ell]}_g$. We write
	\[G_g^{\fr,[\ell]} \coloneqq \mr{im}({\Gamma}^{\fr, [\ell]}_g \to {\Gamma}_g \to G'_g).\]

\subsection{Relaxing the boundary condition}\label{sec:Relaxing} It will be helpful to relax the condition that framings agree with $\ell_\partial$ on all of the boundary and only ask that they agree at a point.

Fix a point $\ast \in \partial W_{g,1}$. Then we let $\mr{Bun}_*(\mr{Fr}(TW_{g,1}),\Theta_\mr{fr}; \ell_*)$ denote the space of $\mr{GL}_{2n}(\bR)$-equivariant maps which agree with $\ell_\partial$ at $\ast \in \partial W_{g,1}$, and just as before we write $\mr{Str}^\mr{fr}_\ast(W_{g,1}) \coloneqq \pi_0(\mr{Bun}_*(\mr{Fr}(TW_{g,1}),\Theta_\mr{fr}; \ell_*))$ for its set of path components. Using the vanishing of Whitehead brackets in $\mr{SO}(2n)$, as in \cite[Section 8.2.2]{KR-WAlg}  we obtain a short exact sequence
\[\begin{tikzcd} 
0 \rar &[-12pt] \mr{Str}^\mr{fr}_\partial(D^{2n}) \dar{\cong} \rar{\circlearrowright} &[-9pt] \mr{Str}^\mr{fr}_\partial(W_{g,1}) \rar \dar{\cong} &[-9pt] \mr{Str}^\mr{fr}_\ast(W_{g,1}) \rar \dar{\cong} &[-12pt] 0 \\
0 \rar & \pi_{2n}(\mr{SO}(2n)) \rar{\circlearrowright} &  \pi_0(\mr{map}_\partial(W_{g,1}, \mr{SO}(2n))) \rar & \mr{Hom}(H_n,\pi_n(\mr{SO}(2n))) \rar & 0.
\end{tikzcd}\]
Here the vertical isomorphisms are induced by $\tau$, and this is in fact a short exact sequence of groups using the group structure coming from pointwise multiplication in $\mr{SO}(2n)$. The groups $\pi_n(\mr{SO}(2n))$ are well-known by Bott periodicity, and the groups $\pi_{2n}(\mr{SO}(2n))$ were computed by Kervaire \cite[p.~161]{KervaireNonstable} (see Table \ref{tab.pi2no}).

\begin{table}[h]
	\centering
	\caption{The groups $\pi_{2n}(\mr{SO}(2n))$ for $n \geq 1$, with the exceptions that $\pi_2(\mr{SO}(2)) = 0$ and $\pi_6(\mr{SO}(6)) = 0$.}
	\label{tab.pi2no}
	\begin{tabular}{lcccccccc}
		\toprule
		$n \pmod 4$ & 0          & 1          & 2 & 3    \\ \midrule
		$\pi_{2n}(\mr{SO}(2n))$  & $(\bZ/2)^3$ & $\bZ/4$ & $ (\bZ/2)^2 $ & $\bZ/4$ \\ \bottomrule
	\end{tabular}
\end{table}

The top short exact sequence is evidently equivariant for the right action of the mapping class group $\Gamma_g$. The induced right actions on the bottom short exact sequence are as follows. The group $\Gamma_g$ acts trivially on the left-hand term. It acts on the middle term via the derivative map $\Gamma_g \to \pi_0(\mr{Bun}_{\partial}(\mr{Fr}(TW_{g,1})))$, the identification
 $\mr{Bun}_{\partial}(\mr{Fr}(TW_{g,1})) \cong \mr{map}_{\partial}(W_{g,1},W_{g,1} \times \mr{GL}_{2n}(\bR))$ from \eqref{eq:BunTrivialised} given by $\tau$, and the action $\circledast$ described above. It acts on the right-hand term via the derivative map composed with the map $\mr{Bun}_{\partial}(\mr{Fr}(TW_{g,1})) \to \mr{Bun}_{\ast}(\mr{Fr}(TW_{g,1}))$ which relaxes the boundary condition to only require the $\mr{GL}_{2n}(\bR)$-maps to be the identity over $* \in \partial W_{g,1}$, followed by the analogue
\[\mr{Bun}_{\ast}(\mr{Fr}(TW_{g,1})) \lra \mr{map}_{\ast}(W_{g,1},W_{g,1} \times \mr{GL}_{2n}(\bR))\]
of \eqref{eq:BunTrivialised} induced by $\tau$, followed by the formula $\alpha \circledast (B, \beta) = \alpha \circ B + \beta$ written in terms of
\[\pi_0(\mr{map}_{\ast}(W_{g,1},W_{g,1} \times \mr{GL}_{2n}(\bR))) \cong \mr{End}(H_n) \ltimes \mr{Hom}(H_n,\pi_n(\mr{SO}(2n))),\]
identifying the right side as a semi-direct product (of monoids). 

The $\Gamma_g$- and $\mr{Str}^\mr{fr}_\partial(D^{2n})$-actions on $\mr{Str}^\mr{fr}_\partial(W_{g,1})$ commute because the $\smash{\mr{Str}^\mr{fr}_\partial(D^{2n})}$-action is by changing the framings in a small disc near the boundary, and diffeomorphisms in $\Gamma_g$ can be changed by an isotopy so that they fix such a disc.

We write $[[\ell]] \in \mr{Str}^\mr{fr}_\ast(W_{g,1})$ for the class of a framing $\ell$, and let
\[{\Gamma}_g^{\fr, [[\ell]]} \coloneqq \mr{Stab}_{\Gamma_g}([[\ell]]).\]
 We define
	\[G_g^{\fr,[[\ell]]} \coloneqq \mr{im}({\Gamma}^{\fr, [[\ell]]}_g \to {\Gamma}_g \to G'_g).\]
	
	\begin{remark}\label{rem:ThetaActsTriv} 
	In \cite{KR-WAlg} we instead defined $G_g^{\fr,[[\ell]]}$ as the image of the stabiliser ${\Lambda}_g^{\fr, [[\ell]]} \coloneqq \mr{Stab}_{\Lambda_g}([[\ell]])$ in $G_g'$. The group $\Lambda_g$ of \cite[Section 3.2]{KR-WAlg} was defined in terms of self-embeddings of $W_{g,1}$, but by the Weiss fiber sequence \cite[Equation (3)]{KR-WAlg} it satisfies $\Lambda_g = \Gamma_g/\Theta_{2n+1}$ so can be discussed without referring to embeddings. For an interpretation of ${\Lambda}_g^{\fr, [[\ell]]}$ in terms of self-embeddings see \cite[Sections 8.1 and 8.5.2]{KR-WAlg}.
	
	The group $\Lambda_g = \Gamma_g/\Theta_{2n+1}$ acts on $\mr{Str}^\mr{fr}_\ast(W_{g,1})$ because $\Theta_{2n+1}$ consists of diffeomorphisms supported in a small disc near the boundary, and when the boundary condition has been relaxed near this ball the derivatives of such diffeomorphisms are homotopic to the identity. (In fact, by Lemma \ref{lem:ThetaActsTriv} the subgroup $\Theta_{2n+1}$ already acts trivially on  $\mr{Str}^\mr{fr}_\partial(W_{g,1})$.) Then ${\Lambda}_g^{\fr, [[\ell]]} = {\Gamma}_g^{\fr, [[\ell]]}/\Theta_{2n+1}$, so the images of ${\Lambda}_g^{\fr, [[\ell]]}$ and ${\Gamma}_g^{\fr, [[\ell]]}$ in $G'_g$ are equal and hence the two definitions of $G_g^{\fr,[[\ell]]}$ agree.
	\end{remark}

\subsection{Quadratic refinements}

The group $H_n = H_n(W_{g,1};\bZ)$ is equipped with the intersection form $\lambda \colon H_n \otimes H_n \to \bZ$, which is $(-1)^n$-symmetric. A function 
\[\mu \colon H_n \lra \begin{cases}
\bZ & \text{ if $n$ is even}\\
 \bZ/2 & \text{ if $n$ is odd}
\end{cases}\]
is called a \emph{quadratic refinement} of $(H_n, \lambda)$ if it satisfies
\begin{equation}\label{eq:QuadProp}
\begin{aligned}
\mu(a \cdot x) &= a^2 \mu(x) \\
\mu(x+y) &= \mu(x) + \mu(y) + \lambda(x,y),
\end{aligned}
\end{equation}
for $a \in \bZ$ and $x, y \in H_n$, where in the latter equation $\lambda(x,y)$ is reduced modulo 2 if $n$ is odd. If $n$ is even then these properties imply that $\mu(x) = \tfrac{1}{2} \lambda(x,x)$, which is well-defined since $(H_n,\lambda)$ is a hyperbolic form and hence even, so $(H_n, \lambda)$ has a unique quadratic refinement and it carries no further information. We shall therefore now suppose that $n$ is odd, and write $\mr{Quad}(H_n, \lambda)$ for the set of quadratic refinements.

If $\mu$ and $\mu'$ are quadratic refinements of $(H_n, \lambda)$ then $\mu'-\mu\colon H_n \to \bZ/2$ is linear, so $\mr{Quad}(H_n, \lambda)$ forms a $\mr{Hom}(H_n, \bZ/2)$-torsor. Equivalently, choosing a symplectic basis $e_1, f_1, e_2, f_2, \ldots, e_g, f_g$ for $H_n$, a quadratic refinement $\mu$ is uniquely and freely determined by the quadratic property \eqref{eq:QuadProp} and the values $\mu(e_1), \ldots, \mu(f_g) \in \bZ/2$.

\subsubsection{Classification of quadratic refinements}\label{sec:ClassQuad}

The action of $\mr{Sp}_{2g}(\bZ) $ on $\mr{Quad}(H_n, \lambda)$ is well-known to have two orbits, distinguished by the \emph{Arf invariant}
\[\mr{Arf}(\mu) = \sum_{i=1}^g \mu(e_i) \mu(f_i) \in \bZ/2,\]
where $e_1, f_1, e_2, f_2, \ldots, e_g, f_g \in H_n$ is a symplectic basis (\cite{Arf}, using Theorem 1 of \cite{NewmanSmart}). 

Let us introduce some further notation for specific quadratic forms. Let $H(0)$ denote the module $\bZ \{e,f\}$ with anti-symmetric form determined by $\lambda(e,f) = 1$ and quadratic refinement determined by $\mu(e) = 0 = \mu(f)$. Let $H(1)$ denote the module $\bZ \{e,f\}$ with the same anti-symmetric form but quadratic refinement determined by $\mu(e) = 1 =\mu(f)$. These quadratic forms have Arf invariant $0$ and $1$ respectively. Since the Arf invariant is additive under orthogonal sum, is valued in $\bZ/2$, and together with rank is a complete invariant of quadratic forms, there is an isomorphism $H(1)^{\oplus 2} \cong H(0)^{\oplus 2}$ of quadratic forms. (In the proof of Lemma \ref{lem:SpaCalc} we will make an explicit choice of such an isomorphism.)

\begin{example} There are $2^{2g-1}+2^{g-1}$ quadratic refinements of Arf invariant 0. The \emph{standard quadratic refinement} $\mu_0$ is that of $H(0)^{\oplus g}$, determined by
\[\mu_0(e_1) = \mu_0(f_1) = \cdots = \mu_0(e_g) = \mu_0(f_g)=0.\]
The group $\mr{Sp}_{2g}^q(\bZ)$ in the statement of Theorem \ref{thm:kreck} is the stabiliser of $\mu_0$ for the action of $Sp_{2g}(\bZ)$ on the set of quadratic refinements.
\end{example}

\begin{example}\label{exam:arf1} Similarly, there are $2^{2g-1}-2^{g-1}$ quadratic refinements of Arf invariant $1$. For concreteness, we take $H(1) \oplus H(0)^{\oplus g-1}$ as the standard choice, and write $\mr{Sp}^a_{2g}(\bZ) \leq \mr{Sp}_{2g}(\bZ)$ for its stabiliser.
\end{example}

\subsubsection{A quadratic form}\label{sec:AQuadForm}

Suppose that $n > 1$. The group $\pi_n(\mr{Fr}(TW_{g,1}))$ may be interpreted via Hirsch--Smale theory as the set ${I}_n^{\fr}(W_{g,1})$ of regular homotopy classes of immersions $j \colon S^n \times D^n \looparrowright W_{g,1}$. Using the map $\pi_n(\mr{Fr}(TW_{g,1})) \to \pi_n(W_{g,1}) = H_n$ and the intersection form $\lambda$, this has a (degenerate) $(-1)^n$-symmetric bilinear form $\lambda^{\fr}$. This has a quadratic refinement $\mu^{\fr}$ given by
\[\mu^{\fr}([j]) = \#\{\text{self-intersections of $j\vert_{S^n \times \{0\}}$}\} \pmod 2.\]
This construction is due to Wall \cite[Theorem 5.2]{wallscm}; see \cite[Definition 5.2]{grwstab1} for a discussion specific to the manifolds $W_{g,1}$.

As the manifolds $W_{g,1}$ admit a framing, there is a splittable short exact sequence
\[0 \lra \pi_n(\mr{SO}(2n)) \overset{i}\lra \pi_n(\mr{Fr}(TW_{g,1})) \lra \pi_n(W_{g,1}) \lra 0.\]
The group $i(\pi_n(\mr{SO}(2n)))$ lies in the radical of the bilinear form $\lambda^{\fr}$, as by definition $\lambda^{\fr}$ factors over $\pi_n(\mr{Fr}(TW_{g,1})) \to \pi_n(W_{g,1}) = H_n$. Thus although the function $\mu^{\fr}$ is quadratic, the composition $\mu^{\fr} \circ i\colon \pi_n(\mr{SO}(2n)) \to \bZ/2$ is linear.

\begin{lemma}\label{lem:Fig8}
The map $\mu^{\fr} \circ i\colon \pi_n(\mr{SO}(2n)) \to \bZ/2$ is onto if and only if $n=3,7$.
\end{lemma}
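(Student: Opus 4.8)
The plan is to reinterpret the homomorphism $\mu^{\fr}\circ i$ geometrically and reduce its surjectivity to the parallelisability of $S^n$. First I would make the map $i$ explicit. Trivialising $TW_{g,1}$ by the reference framing $\tau$ and using that the core $S^n\times\{0\}$ is null-homotopic, an element $\phi \in \pi_n(\mr{SO}(2n))$ corresponds under $i$ to a bundle monomorphism $T(S^n\times D^n)\to TW_{g,1}$ which, restricted to the core and written via the canonical trivialisation $TS^n\oplus\epsilon^n\cong\epsilon^{2n}$, is the automorphism $\phi$ of $\epsilon^{2n}$. The number of double points of $j\vert_{S^n\times\{0\}}$ depends only on the underlying immersion of the core, that is, on the bundle monomorphism $TS^n\hookrightarrow\epsilon^{2n}\xrightarrow{\phi}\epsilon^{2n}$. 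Writing $V_{2n,n}=\mr{SO}(2n)/\mr{SO}(n)$ for the Stiefel manifold of $n$-frames in $\bR^{2n}$, with $\mr{SO}(n)$ acting on the $\epsilon^n$-summand, this monomorphism is classified by the image of $\phi$ under $p_*\colon\pi_n(\mr{SO}(2n))\to\pi_n(V_{2n,n})$. Since the mod $2$ double point count is a regular homotopy invariant and is additive, Hirsch--Smale theory gives a homomorphism $D\colon\pi_n(V_{2n,n})\to\bZ/2$, and the upshot of this first step is the factorisation
\[\mu^{\fr}\circ i \;=\; D\circ p_*\colon \pi_n(\mr{SO}(2n))\lra\bZ/2.\]

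Next I would analyse the two factors separately. The group $\pi_n(V_{2n,n})$ is $\bZ$ for $n$ even and (as $n>1$) is $\bZ/2$ for $n$ odd, and in either case its generator is represented by an immersion $S^n\looparrowright\bR^{2n}$ with a single double point; hence $D$ is nonzero, and is an isomorphism when $n$ is odd. For $p_*$, the long exact sequence of $\mr{SO}(n)\to\mr{SO}(2n)\to V_{2n,n}$ gives $\mr{im}(p_*)=\ker(\partial_V)$ for the connecting map $\partial_V\colon\pi_n(V_{2n,n})\to\pi_{n-1}(\mr{SO}(n))$. Comparing this fibration with $\mr{SO}(n)\to\mr{SO}(n+1)\to S^n$ via the inclusion $\mr{SO}(n+1)\hookrightarrow\mr{SO}(2n)$ (which is the identity on the fibre $\mr{SO}(n)$ and induces a surjection $\pi_n(S^n)\twoheadrightarrow\pi_n(V_{2n,n})$) identifies $\partial_V$ on a generator with the clutching class $\tau_n=[TS^n]\in\pi_{n-1}(\mr{SO}(n))$ of the tangent bundle of $S^n$.

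Finally I would assemble these. Since $D$ is nonzero, $D\circ p_*$ is onto exactly when $\mr{im}(p_*)=\ker(\partial_V)$ is not contained in $\ker D$. For $n$ even, $\tau_n$ has infinite order (its Euler number is $\chi(S^n)=2$), so $\partial_V$ is injective, $p_*=0$, and the composite vanishes. For $n$ odd, $D$ is an isomorphism, so the composite is onto iff $p_*$ is onto iff $\partial_V=0$ iff $\tau_n=0$, that is, iff $S^n$ is parallelisable. By the theorem of Bott--Milnor and Kervaire that $S^n$ is parallelisable precisely for $n\in\{1,3,7\}$ (among $n\geq1$), and since we assume $n>1$, this leaves exactly $n\in\{3,7\}$, proving the lemma.

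The main obstacle is the first paragraph: rigorously pinning down the identification $\mu^{\fr}\circ i=D\circ p_*$, and within the second step correctly identifying the connecting map $\partial_V$ with the tangent-bundle class $\tau_n$. Once these geometric translations are secured, the only remaining input is the (deep) parallelisability theorem, which is precisely what singles out $\{3,7\}$; the rest is a routine chase through the long exact sequence of a Stiefel fibration.
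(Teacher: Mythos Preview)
Your argument is correct and takes a genuinely different route from the paper's. The paper argues directly with the Whitney figure-eight immersion: for $n=3,7$ the triviality of $TS^n$ lets one \emph{extend} the figure-eight to a framed immersion with $\mu^{\fr}=1$; for $n\neq 2,3,7$ one assumes a framed immersion with $\mu^{\fr}=1$ exists, connect-sums with a figure-eight, cancels the double points via the Whitney trick, and obtains an embedded $S^n\subset D^{2n}$ with nontrivial normal bundle $TS^n$, contradicting Kervaire's theorem that such normal bundles are trivial. The case $n=2$ is dispatched separately using $\pi_2(\mr{SO}(4))=0$.

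Your approach instead factors $\mu^{\fr}\circ i$ through the Stiefel projection $p_*\colon\pi_n(\mr{SO}(2n))\to\pi_n(V_{2n,n})$ and reduces everything to the long exact sequence of $\mr{SO}(n)\to\mr{SO}(2n)\to V_{2n,n}$, identifying the connecting map with the clutching class of $TS^n$. The paper's proof is more hands-on and avoids the bookkeeping you flag as the ``main obstacle'' (pinning down the torsor identification of $\pi_0\,\mr{Imm}(S^n,\bR^{2n})$ with $\pi_n(V_{2n,n})$, and matching $\partial_V$ with $\tau_n$); in exchange it invokes both the Whitney trick and Kervaire's normal-bundle theorem. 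Your argument trades those geometric inputs for the single deep fact of sphere parallelisability, treats $n$ even uniformly without singling out $n=2$, and makes transparent \emph{why} the answer is governed by $TS^n$. Both approaches are ultimately of comparable depth; yours is more homotopy-theoretic, the paper's more surgery-flavoured.
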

\begin{proof}
Recall the Whitney ``figure eight'' immersion $S^n \looparrowright D^{2n}$, which has one double point and has normal bundle isomorphic to $TS^n$.

If $n=3,7$ then $TS^n$ is trivial, so the Whitney immersion may be extended to an immersion $j \colon S^n \times D^n \looparrowright D^{2n} \subset W_{g,1}$ which satisfies $\mu([j]) = 1$, and so the composition
	\[\pi_n(\mr{SO}(2n)) = \pi_n(\mr{Fr}(D^{2n})) \overset{i}\lra \pi_n(\mr{Fr}(TW_{g,1})) \overset{\mu^{\fr}}\lra \bZ/2\]
	is surjective.
	
	For $n \neq 3,7$, there are two cases. If $n=2$, then $\pi_n(\mr{SO}(2n)) = 0$ (e.g.~by Bott periodicity) and the result follows. So let us suppose $n \neq 2$, and for a contradiction assume that $j \colon S^n \times D^n \looparrowright D^{2n} \subset W_{g,1}$ has $\mu^{\fr}([j])=1$. We may form the ambient connect-sum inside $D^{2n}$ of the immersion $j\vert_{S^n \times \{0\}}$ with a disjoint copy of the Whitney immersion, giving an immersion $j'\colon S^n \looparrowright D^{2n}$ having an even number of double points and having normal bundle $TS^n$. Using the Whitney trick we can eliminate the double points to obtain an embedding $j''$, still having normal bundle $TS^n$: as $TS^n$ is a non-trivial bundle for $n \neq 3,7$ this is a contradiction, by \cite{KervaireNormal}.
\end{proof}

It follows that for $n \neq 3,7$ the function $\mu^{\fr}$ descends to a function $\mu\colon H_n \to \bZ/2$, a quadratic refinement of $(H_n, \lambda)$. As the standard symplectic basis of $H_n$ may be represented by embedded normally-framed spheres, in the notation of the previous section this gives the quadratic form $H(0)^{\oplus g}$.  Diffeomorphisms of $W_{g,1}$ must also preserve this quadratic refinement: this accounts for the fact that $G'_g = \mr{Sp}_{2g}^q(\bZ)$ in these dimensions in Theorem \ref{thm:kreck}.

\subsubsection{Quadratic refinements from framings}\label{sec:QuadFram}

For $n$ odd a framing $\ell \colon W_{g,1} \to \mr{Fr}(TW_{g,1})$ may be used to define
\[\mu_\ell \colon H_n = \pi_n(W_{g,1}) \overset{\ell_*}\lra \pi_n(\mr{Fr}(TW_{g,1})) \overset{\mu^{\fr}}\lra \bZ/2,\]
which is a quadratic refinement of $(H_n, \lambda)$. This construction provides a $\Gamma_g$-equivariant function
\[\Phi \colon \mr{Str}^\mr{fr}_\ast(W_{g,1}) \lra \mr{Quad}(H_n, \lambda),\]
where the action of $\Gamma_g$ on $\mr{Quad}(H_n, \lambda)$ is via $G'_g$. 

\begin{lemma}\label{lem:QuadStr1}
	For $n=3,7$ the function $\Phi$ is surjective.
\end{lemma}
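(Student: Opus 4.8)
The plan is to make both the source and the target of $\Phi$ into torsors in a compatible way and then deduce surjectivity from Lemma \ref{lem:Fig8}. Fix the reference framing $\tau$; by Section \ref{sec:Relaxing} it identifies $\mr{Str}^\mr{fr}_\ast(W_{g,1})$ with $\mr{Hom}(H_n,\pi_n(\mr{SO}(2n)))$, and recall that $\mr{Quad}(H_n,\lambda)$ is a torsor under $\mr{Hom}(H_n,\bZ/2)$, with $\mu_\tau = \Phi(\tau)$ one of its elements. The goal is therefore to show that under these identifications $\Phi$ becomes an affine map whose linear part is surjective.

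The key step is the transformation formula for $\mu_\ell$. Let $\ell$ correspond to $\phi \in \mr{Hom}(H_n,\pi_n(\mr{SO}(2n)))$ under $\tau$. I would first check that the splitting $\ell_*\colon \pi_n(W_{g,1}) \to \pi_n(\mr{Fr}(TW_{g,1}))$ of the short exact sequence of Section \ref{sec:AQuadForm} satisfies $\ell_* = \tau_* + i\circ\phi$. This is immediate from the trivialisation $\mr{Fr}(TW_{g,1}) \cong W_{g,1}\times\mr{GL}_{2n}(\bR)$ supplied by $\tau$: the framing $\ell$ is the section $w \mapsto (w,\lambda(w))$ for a map $\lambda$ representing $\phi$, so on $\pi_n$ it sends $\sigma \mapsto (\sigma,\phi(\sigma))$, whereas $\tau_*$ sends $\sigma \mapsto (\sigma,0)$. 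Then, since $\mu^{\fr}$ is quadratic for $\lambda^{\fr}$ and $i(\pi_n(\mr{SO}(2n)))$ lies in the radical of $\lambda^{\fr}$, the cross term $\lambda^{\fr}(\tau_*(\sigma),i(\phi(\sigma)))$ vanishes and
\[\mu_\ell = \mu_\tau + (\mu^{\fr}\circ i)\circ\phi.\]
Thus $\Phi$ is the affine map $\phi \mapsto \mu_\tau + (\mu^{\fr}\circ i)_*(\phi)$, where $(\mu^{\fr}\circ i)_*$ denotes post-composition with $\mu^{\fr}\circ i$.

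It then remains to see that the linear part
\[(\mu^{\fr}\circ i)_*\colon \mr{Hom}(H_n,\pi_n(\mr{SO}(2n))) \lra \mr{Hom}(H_n,\bZ/2)\]
is surjective. As $H_n$ is a finitely generated free abelian group, $\mr{Hom}(H_n,-)$ is exact and hence carries the surjection $\mu^{\fr}\circ i$ of Lemma \ref{lem:Fig8}---which holds exactly for $n=3,7$---to a surjection; concretely, fixing $v$ with $(\mu^{\fr}\circ i)(v)=1$ and a basis of $H_n$, any $\psi\in\mr{Hom}(H_n,\bZ/2)$ is realised by the homomorphism sending each basis vector $b$ to $\psi(b)\cdot v$. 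Surjectivity of $\Phi$ follows.

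I expect the only non-formal point to be the transformation formula, specifically the two ingredients $\ell_*=\tau_*+i\circ\phi$ and the vanishing of the cross term; the care here is entirely in bookkeeping the basepoints and the group structure on $\mr{Str}^\mr{fr}_\ast(W_{g,1})$. Everything downstream is a torsor-theoretic consequence of Lemma \ref{lem:Fig8}.
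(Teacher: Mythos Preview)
Your proof is correct and follows essentially the same approach as the paper. Both arguments rest on the transformation formula $\mu_{\delta\cdot\ell}=\mu_\ell+(\mu^{\fr}\circ i)\circ\delta$ together with Lemma~\ref{lem:Fig8} and the freeness of $H_n$; you package this as ``$\Phi$ is affine with surjective linear part'' while the paper picks a target $\mu'$ and lifts $\mu'-\mu_\ell$, but the content is identical, and indeed your derivation of the transformation formula (via $\ell_*=\tau_*+i\circ\phi$ and the vanishing cross term) spells out a step the paper leaves implicit.
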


\begin{proof}
Choose a framing $\ell$, and let $\mu' \in \mr{Quad}(H_n, \lambda)$. The function $\mu'-\mu_\ell$ is a homomorphism $L \colon H_n \to \bZ/2$, and as $H_n$ is a free $\bZ$-module and $\mu^{\fr} \circ i$ is surjective by Lemma \ref{lem:Fig8}, we may choose a homomorphism $\delta \colon H_n \to \pi_n(\mr{SO}(2n))$ such that $\mu^{\fr} \circ i \circ \delta = L$. But then if the framing $\ell$ is changed using $\delta$ to get a new framing $\delta \cdot \ell$, we have $\mu_{\delta \cdot \ell} = \mu_\ell + L$, so $\mu' = \mu_{\delta \cdot \ell}$. Thus $\Phi$ is a surjection.
\end{proof}

\subsection{Orbits and stabilisers}\label{sec:OrbStab}

Our proof of Theorem \ref{thm:main} will be in terms of the sequence
\begin{equation}\label{eq:FundSeq}
0 \to {\Gamma}_{g}^{\fr, [\ell]} \to {\Gamma}_g^{\fr, [[\ell]]} \overset{{f}_\ell}\lra \mr{Str}^\mr{fr}_\partial(D^{2n}) \xrightarrow{- \cdot [\ell]} \mr{Str}^\mr{fr}_\partial(W_{g,1})/\Gamma_g \to \mr{Str}^\mr{fr}_{*}(W_{g,1})/\Gamma_g \to \{*\}
\end{equation}
which is exact in the sense of groups and pointed sets. In particular, $f_\ell$ is a group homomorphism. This sequence comes from the long exact sequence on homotopy groups for the principal $\mr{Str}^\mr{fr}_\partial(D^{2n})$-bundle
\[\mr{Str}^\mr{fr}_\partial(D^{2n}) \lra \mr{Str}^\mr{fr}_\partial(W_{g,1}) \hcoker \Gamma_g \lra \mr{Str}^\mr{fr}_\ast(W_{g,1}) \hcoker \Gamma_g,\]
and the orbit-stabiliser theorem.

The strategy of our argument will be as follows. We will estimate the size of $\mr{im}({f}_\ell) = {\Gamma}_g^{\fr, [[\ell]]}/{\Gamma}_g^{\fr, [\ell]}$ from below using the surjection
\[{\Gamma}_g^{\fr, [[\ell]]}/{\Gamma}_g^{\fr, [\ell]} \lra G_g^{\fr,[[\ell]]}/G_g^{\fr,[\ell]}.\]
We will identify the group $G_g^{\fr,[[\ell]]}$ as the automorphism group of a certain quadratic form (in fact we will have $G_g^{\fr,[[\ell]]} = G'_g$ for $n \neq 1, 3,7$, and a mild variant for $n=1,3,7$), whose abelianisation is known to have order 4. We will then show by geometric considerations that $G_g^{\fr,[\ell]}$ has trivial abelianisation, so that $G_g^{\fr,[[\ell]]}/G_g^{\fr,[\ell]}$ has order at least 4. We then use $\mr{Str}^\mr{fr}_\partial(D^{2n}) = \pi_{2n}(\mr{SO}(2n))$ and consult Table \ref{tab.pi2no}. 

For $n \not\equiv 0 \pmod 4$ the group $\pi_{2n}(\mr{SO}(2n))$ has order 4, so it follows that the map ${f}_\ell$ is a surjection, and so ${\Gamma}_g^{\fr, [\ell]}$ has index precisely 4 in ${\Gamma}_g^{\fr, [[\ell]]}$. Thus $G_g^{\fr,[[\ell]]}/G_g^{\fr,[\ell]}$ has order precisely 4, from which we will deduce that $G_g^{\fr,[\ell]}$ is precisely the commutator subgroup of $G_g^{\fr,[[\ell]]}$.

For $n \equiv 0 \pmod 4$ the group $\pi_{2n}(\mr{SO}(2n))$ has order $8$, and the argument is a little more complicated. We will show that the image of ${f}_\ell$ has index 2 in $\mr{Str}^\mr{fr}_\partial(D^{2n}) = \pi_{2n}(\mr{SO}(2n)) = (\bZ/2)^3$.

\section{Counting framings relative to a point}

 In this section we determine $\mr{Str}^\mr{fr}_\ast(W_{g,1})/\Gamma_g$.

\begin{proposition}\label{prop:upper-bound} Suppose $n \geq 1$ and $g \geq 2$.
	\begin{itemize}
		\item If $n \neq 1, 3,7$, then $\mr{Str}^\mr{fr}_\ast(W_{g,1})/\Gamma_g$ consists of a single element.
		\item If $n = 1, 3,7$, then $\mr{Str}^\mr{fr}_\ast(W_{g,1})/\Gamma_g$ consists of two elements.
	\end{itemize}
	If $n > 1$ then these in fact hold for $g \geq 1$.
\end{proposition}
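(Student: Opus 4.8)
The plan is to work throughout with the identification $\mr{Str}^\fr_\ast(W_{g,1}) \cong \mr{Hom}(H_n, \pi_n(\mr{SO}(2n)))$ furnished by the short exact sequence of Section \ref{sec:Relaxing}, and to treat the two parts of the $\Gamma_g$-action separately: the normal subgroup $I_g \leq \Gamma_g$ acts by translations, while the quotient $G'_g$ acts affinely by precomposition on $H_n$. The first step is to pin down the translation subgroup. By Kreck's theorem (Theorem \ref{thm:kreck}) the group $I_g$ surjects onto $\mr{Hom}(H_n, S\pi_n(\mr{SO}(n)))$, and—tracing the derivative action through the formula $\alpha \circledast (B,\beta) = \alpha\circ B + \beta$ as in Section 8 of \cite{KR-WAlg}—an element with Kreck invariant $\beta$ acts on $\mr{Hom}(H_n, \pi_n(\mr{SO}(2n)))$ by translation by the image of $\beta$ under the stabilisation $S\pi_n(\mr{SO}(n)) \subseteq \pi_n(\mr{SO}(n+1)) \to \pi_n(\mr{SO}(2n))$. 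Setting $S_\infty \coloneqq \mr{im}(\pi_n(\mr{SO}(n)) \to \pi_n(\mr{SO}(2n)))$, the group $I_g$ therefore acts by translation through $\mr{Hom}(H_n, S_\infty)$, and since $H_n$ is free this produces a bijection
\[\mr{Str}^\fr_\ast(W_{g,1})/\Gamma_g \;\cong\; \mr{Hom}(H_n, C)/G'_g, \qquad C \coloneqq \mr{coker}\big(\pi_n(\mr{SO}(n)) \to \pi_n(\mr{SO}(2n))\big),\]
with $G'_g$ acting affinely. The whole proposition is thus reduced to determining $C$.

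To compute $C$ I would use the fibre sequences $\mr{SO}(n) \to \mr{SO}(n+1) \to S^n$ and $\mr{SO}(n+1) \to \mr{SO}(n+2) \to S^{n+1}$, noting $\pi_n(\mr{SO}(n+2)) = \pi_n(\mr{SO}(2n))$ is stable for $n \geq 2$. The connecting maps carry the generators to the tangent clutching classes $[TS^n] \in \pi_{n-1}(\mr{SO}(n))$ and $[TS^{n+1}] \in \pi_n(\mr{SO}(n+1))$, and the composite $\pi_n(\mr{SO}(n+1)) \to \pi_n(S^n) = \bZ$ sends $[TS^{n+1}]$ to $\chi(S^{n+1}) = 2$ when $n$ is odd. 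For $n$ even, $[TS^n]$ has infinite order (its Euler number is $2$), which forces the first stabilisation to be surjective and hence $C = 0$. For $n$ odd the same diagram forces $2\,[TS^n] = 0$, so $[TS^n]$ has order $1$ when $S^n$ is parallelisable, i.e.\ $n = 1,3,7$, and order $2$ otherwise; combining this with $[TS^{n+1}] \mapsto 2$ gives $C = \bZ/2$ for $n = 3,7$ and $C = 0$ for odd $n \neq 1,3,7$. The case $n=1$ is genuinely different—there $2n=2$ is unstable and $C = \bZ$—and I would simply cite \cite[Theorem 2.9]{RWFramed} for $g \geq 2$.

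The count then follows. When $C = 0$, i.e.\ $n \neq 1,3,7$, the subgroup $I_g$ already acts transitively on $\mr{Hom}(H_n,\pi_n(\mr{SO}(2n)))$, so there is a single orbit. When $n = 3,7$ we have $C = \bZ/2$, so $S_\infty$ is the index-two subgroup $2\bZ \subset \pi_n(\mr{SO}(2n)) = \bZ$; since $\mu^{\fr}\circ i \colon \bZ \to \bZ/2$ is onto by Lemma \ref{lem:Fig8}, its kernel is also $2\bZ$, and thus $S_\infty = \ker(\mu^{\fr}\circ i)$ automatically. Consequently the fibres of the $\Gamma_g$-equivariant map $\Phi \colon \mr{Str}^\fr_\ast(W_{g,1}) \to \mr{Quad}(H_n,\lambda)$ coincide with the $I_g$-orbits; as $\Phi$ is surjective (Lemma \ref{lem:QuadStr1}) and $G'_g = \mr{Sp}_{2g}(\bZ)$ acts on $\mr{Quad}(H_n,\lambda)$ with exactly two orbits distinguished by the Arf invariant (Section \ref{sec:ClassQuad}), the map $\Phi$ descends to a bijection $\mr{Str}^\fr_\ast(W_{g,1})/\Gamma_g \cong \mr{Quad}(H_n,\lambda)/\mr{Sp}_{2g}(\bZ)$ onto a two-element set, valid already for $g \geq 1$.

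I expect the main obstacle to be the determination of $S_\infty$, in two interlocking senses. First, one must verify geometrically (via the derivative action of \cite{KR-WAlg}) that the translations realised by $I_g$ are \emph{exactly} $\mr{Hom}(H_n, S_\infty)$ and no larger, matching Kreck's invariant—valued in $\pi_n(\mr{SO}(n+1))$—with framing changes valued in $\pi_n(\mr{SO}(2n))$ under stabilisation. Second, one must carry out the two-fibration computation of the cokernel $C$, whose nonvanishing for $n=3,7$ rests on the facts that $[TS^{n+1}]$ has image $2$ in $\pi_n(S^n)$ and that $S^n$ is parallelisable precisely for $n=1,3,7$; the happy coincidence that this index-two subgroup is forced to equal $\ker(\mu^{\fr}\circ i)$ is what lets Lemma \ref{lem:QuadStr1} and the Arf invariant deliver the sharp count.
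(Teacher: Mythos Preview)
Your approach is essentially that of the paper: reduce to the $I_g$-orbits via Kreck's theorem, compute the cokernel of $S\pi_n(\mr{SO}(n)) \to \pi_n(\mr{SO}(2n))$, and for $n=3,7$ identify the residual $G'_g$-set with $\mr{Quad}(H_n,\lambda)$ and invoke the Arf invariant; for $n=1$ you, like the paper, defer to \cite{RWFramed}. The only substantive differences are that the paper obtains the cokernel by citing Levine's theorem (recorded here as Lemma~\ref{lem:levine}) rather than by your direct two-fibration computation, and that the paper establishes the bijection $\mr{Str}^{\fr}_\ast(W_{g,1})/I_g \cong \mr{Quad}(H_n,\lambda)$ by a counting argument (both sides have $2^{2g}$ elements) rather than your cleaner observation that $S_\infty$ and $\ker(\mu^{\fr}\circ i)$ are both the unique index-$2$ subgroup of~$\bZ$.

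One small gap: Kreck's Theorem~\ref{thm:kreck} requires $2n \geq 6$, so your invocation of it does not cover $n=2$. The paper handles this case separately by noting $\pi_2(\mr{SO}(4))=0$, whence $\mr{Str}^{\fr}_\ast(W_{g,1})$ is already a point; you should insert this observation before appealing to Kreck.
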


\subsection{The cases $n \geq 3$} In this case Theorem \ref{thm:kreck} is available, and we will study the action of $I_g \leq \Gamma_g$ on $\mr{Str}^\mr{fr}_\ast(W_{g,1})$. Derivatives of elements of $\Theta_{2n+1} \leq I_g$ are bundle maps supported in a small disc which can be taken to be near the boundary: as in Remark \ref{rem:ThetaActsTriv} these act trivially on $\mr{Str}^\mr{fr}_\ast(W_{g,1})$. Thus the action of $I_g$ on $\mr{Hom}(H_n,\pi_n(\mr{SO}(2n)))$ factors over $I_g \to I_g / \Theta_{2n+1} = \mr{Hom}(H_n,S\pi_n(\mr{SO}(n)))$. This lands in the subgroup 
\[\mr{Hom}(H_n,\pi_n(\mr{SO}(2n))) \subset \mr{GL}(H_n) \ltimes \mr{Hom}(H_n,\pi_n(\mr{SO}(2n)))\]
of the invertible elements of $\pi_0(\mr{map}_{\ast}(W_{g,1},W_{g,1} \times \mr{GL}_{2n}(\bR)))$. The resulting map $\mr{Hom}(H_n,S\pi_n(\mr{SO}(n))) \to \mr{Hom}(H_n,\pi_n(\mr{SO}(2n)))$ is given by composition with the homomorphism $S\pi_n(\mr{SO}(n)) \to \pi_n(\mr{SO}(2n))$. This homomorphism was studied by Levine; Theorem 1.4 of \cite{levineexotic} and Table \ref{tab.spino} imply:

\begin{lemma}\label{lem:levine} For $2n \geq 6$ the stabilisation $S\pi_n(\mr{SO}(n)) \to \pi_n(\mr{SO}(2n))$ is: 
	\begin{enumerate}[\indent (i)]
		\item surjective with kernel $\bZ/2$ when $n$ is even, 
		\item an isomorphism when $n$ is odd but not $3$ or $7$, 
		\item injective with cokernel $\bZ/2$ when $n = 3,7$.
	\end{enumerate}
\end{lemma}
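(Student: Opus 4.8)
The plan is to factor the stabilisation as a composite of two one-step stabilisations and to read off the kernel and cokernel of each from the relevant sphere-bundle fibrations, so that the whole statement reduces to the geometry of $TS^{n+1}$ and to which Euler numbers are realisable over $S^{n+1}$.

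First I would record that the target is already stable: since $\pi_n(\mr{SO}(m)) \to \pi_n(\mr{SO}(m+1))$ is an isomorphism for $m \geq n+2$, for $2n \geq 6$ we have $\pi_n(\mr{SO}(2n)) = \pi_n(\mr{SO}(n+2)) = \pi_n(\mr{SO})$, and the map in question is the restriction to $S\pi_n(\mr{SO}(n))$ of the single remaining unstable stabilisation $b \colon \pi_n(\mr{SO}(n+1)) \to \pi_n(\mr{SO}(n+2))$. Two long exact sequences feed in. The fibration $\mr{SO}(n) \to \mr{SO}(n+1) \xrightarrow{p} S^n$ identifies $S\pi_n(\mr{SO}(n))$ with $\ker(p_* \colon \pi_n(\mr{SO}(n+1)) \to \pi_n(S^n) = \bZ)$, and the fibration $\mr{SO}(n+1) \to \mr{SO}(n+2) \to S^{n+1}$ shows that $b$ is surjective with $\ker b = \langle \tau \rangle$, where $\tau \in \pi_n(\mr{SO}(n+1))$ is the clutching function of $TS^{n+1}$.

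The computation of the kernel and cokernel of $b|_{\ker p_*}$ is then formal: the kernel is $\langle \tau \rangle \cap \ker p_*$, and the cokernel is $\pi_n(\mr{SO}(n+1))/(\ker p_* + \langle\tau\rangle) \cong \mathrm{im}(p_*)/p_*\langle\tau\rangle$. I would evaluate $p_*(\tau)$ as the Euler number $e(TS^{n+1}) = \chi(S^{n+1}) = 1 + (-1)^{n+1}$. For $n$ even this vanishes, so $\langle\tau\rangle \subseteq \ker p_*$; the cokernel becomes $\mathrm{im}(p_*)$, which is $0$ since the Euler class of an odd-rank bundle over $S^{n+1}$ is $2$-torsion in the torsion-free group $\bZ$. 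Hence the map is onto, and its kernel $\langle\tau\rangle$ then has order $|S\pi_n(\mr{SO}(n))|/|\pi_n(\mr{SO})|$, which Table \ref{tab.spino} and Bott periodicity evaluate to $2$ in every even case; this gives (i). For $n$ odd, $p_*(\tau) = 2 \neq 0$ shows $\tau$ has infinite order and $\langle\tau\rangle \cap \ker p_* = 0$, so the map is injective with cokernel $\mathrm{im}(p_*)/2\bZ$.

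The crux is thus the determination of $\mathrm{im}(p_*) \subseteq \bZ$ for $n$ odd, and this is exactly where $n = 3,7$ separate from the rest: an oriented rank-$(n+1)$ bundle over $S^{n+1}$ of Euler number $1$ exists precisely when there is a Hopf-invariant-one element in $\pi_{2n+1}(S^{n+1})$, i.e.\ by Adams' theorem precisely when $n+1 \in \{2,4,8\}$, that is $n \in \{1,3,7\}$. For $n = 3,7$ this gives $\mathrm{im}(p_*) = \bZ$ and cokernel $\bZ/2$, proving (iii); for the remaining odd $n$ it gives $\mathrm{im}(p_*) = 2\bZ$ and cokernel $0$, so the injection is an isomorphism, which is (ii) (with the subcases where both groups vanish being trivial). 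I expect the only genuine obstacle to be pinning down $\mathrm{im}(p_*)$, i.e.\ feeding in the Hopf-invariant-one theorem in the guise of realisable Euler numbers; this is the content that Theorem~1.4 of \cite{levineexotic} packages, and combining it with the group orders of Table \ref{tab.spino} is precisely the bookkeeping indicated above.
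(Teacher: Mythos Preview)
Your argument is correct. The paper itself gives no proof beyond the sentence ``Theorem~1.4 of \cite{levineexotic} and Table~\ref{tab.spino} imply'', so you are supplying what the paper outsources. Your route---factoring through $\pi_n(\mr{SO}(n+1)) \to \pi_n(\mr{SO}(n+2))$, identifying $S\pi_n(\mr{SO}(n))$ with $\ker p_*$, and reading off kernel and cokernel from $p_*(\tau)=\chi(S^{n+1})$ together with the range of realisable Euler numbers---is the standard unpacking of Levine's statement, and your invocation of Adams' Hopf-invariant-one theorem is exactly the nontrivial input that separates $n=3,7$ from the other odd cases. The only place you lean on the paper's data rather than arguing directly is the even case, where you deduce $|\langle\tau\rangle|=2$ by comparing $|S\pi_n(\mr{SO}(n))|$ from Table~\ref{tab.spino} with $|\pi_n(\mr{SO})|$ from Bott periodicity; this is precisely how the paper intends the lemma to be read off, so there is no gap. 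Your proof is strictly more informative than the paper's citation, at the cost of importing Adams' theorem explicitly.
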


We conclude that:

\begin{proposition}\label{prop:framings-rel-point} Suppose $2n \geq 6$ and consider the action of $I_g$ on $\mr{Str}^\mr{fr}_\ast(W_{g,1})$.
\begin{enumerate}[(i)]
\item When $n \neq 3,7$, this action has a single orbit. 

\item When $n = 3,7$, the set of orbits is in bijection with $\mr{Hom}(H_n, \bZ/2)$. 
\end{enumerate}
In either case the stabiliser $I_g^{\fr, [[\ell]]}$ of any $[[\ell]] \in \mr{Str}^\mr{fr}_\ast(W_{g,1})$ satisfies
\[I_g^{\fr, [[\ell]]}/\Theta_{2n+1} \cong \begin{cases}
0 & \text{ if $n$ is odd}\\
\mr{Hom}(H_n, \bZ/2) &\text{ if $n$ is even}.
\end{cases}\]
\end{proposition}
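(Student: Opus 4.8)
The plan is to transport the entire question into the group $\mr{Hom}(H_n, \pi_n(\mr{SO}(2n)))$, which the lower short exact sequence identifies with $\mr{Str}^\mr{fr}_\ast(W_{g,1})$, and to observe that there the $I_g$-action is nothing but \emph{translation}. Indeed, the derivative of $\psi \in I_g$ gives an element $(B, \beta) \in \mr{End}(H_n) \ltimes \mr{Hom}(H_n, \pi_n(\mr{SO}(2n)))$, and since $I_g = \ker(\Gamma_g \to G'_g)$ acts trivially on $H_n$ we have $B = \mr{id}$; the formula $\alpha \circledast (\mr{id}, \beta) = \alpha + \beta$ then exhibits the action as addition of $\beta$. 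As recalled just before the statement, this $\beta$ is the image of $\psi$ under $I_g \to I_g/\Theta_{2n+1} = \mr{Hom}(H_n, S\pi_n(\mr{SO}(n)))$ post-composed with the stabilisation $S \colon S\pi_n(\mr{SO}(n)) \to \pi_n(\mr{SO}(2n))$. In particular $\Theta_{2n+1}$ acts trivially and the action factors through the translation action of $\mr{im}(S_*)$, where $S_* \colon \mr{Hom}(H_n, S\pi_n(\mr{SO}(n))) \to \mr{Hom}(H_n, \pi_n(\mr{SO}(2n)))$ denotes post-composition with $S$.

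From here the computation is formal. Because $H_n$ is a free $\bZ$-module, the functor $\mr{Hom}(H_n, -)$ is exact, so $\ker(S_*) = \mr{Hom}(H_n, \ker S)$ and $\mr{coker}(S_*) = \mr{Hom}(H_n, \mr{coker}\, S)$. A translation action of an abelian group through $S_*$ has orbit set the quotient $\mr{coker}(S_*)$ and has the same stabiliser $\ker(S_*)$ at every point; combining this with the triviality of the $\Theta_{2n+1}$-action gives both
\[\mr{Str}^\mr{fr}_\ast(W_{g,1})/I_g \cong \mr{Hom}(H_n, \mr{coker}\, S) \qquad \text{and} \qquad I_g^{\fr,[[\ell]]}/\Theta_{2n+1} \cong \mr{Hom}(H_n, \ker S).\]
It then only remains to insert the values of $\ker S$ and $\mr{coker}\, S$ supplied by Lemma \ref{lem:levine}: for $n$ even, $\ker S = \bZ/2$ and $\mr{coker}\, S = 0$, giving one orbit and stabiliser $\mr{Hom}(H_n, \bZ/2)$; for $n$ odd with $n \neq 3,7$, $S$ is an isomorphism, giving one orbit and trivial stabiliser; and for $n = 3,7$, $\ker S = 0$ and $\mr{coker}\, S = \bZ/2$, giving orbit set $\mr{Hom}(H_n, \bZ/2)$ and trivial stabiliser. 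This is exactly the content of (i), (ii), and the displayed formula.

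The only step needing genuine justification — and the one I regard as the crux — is that the $I_g$-action collapses to this translation, i.e.\ that $B = \mr{id}$ and that the translation component is computed by the stabilisation $S_*$. Both ingredients are already available: the first is the defining property of $I_g$ as the kernel of the homomorphism recording the action on $H_n$, and the second is the analysis of the $I_g$-action on $\mr{Hom}(H_n, \pi_n(\mr{SO}(2n)))$ carried out in the paragraphs preceding Lemma \ref{lem:levine}. Once these are in hand, no further difficulty arises; everything else is the elementary bookkeeping of a translation action together with a table lookup.
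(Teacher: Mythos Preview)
Your proof is correct and follows essentially the same approach as the paper: both identify the $I_g$-action on $\mr{Str}^\mr{fr}_\ast(W_{g,1}) \cong \mr{Hom}(H_n,\pi_n(\mr{SO}(2n)))$ as translation through the image of $S_*$, and then read off orbits and stabilisers from the kernel and cokernel of $S$ via Lemma~\ref{lem:levine}. Your write-up is somewhat more explicit about why $B = \mr{id}$ and why $\mr{Hom}(H_n,-)$ preserves exactness, but the argument is the same.
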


\begin{proof}When $n \neq 3,7$, Lemma \ref{lem:levine} says that $I_g$ surjects on to $\mr{Hom}(H_n,\pi_n(\mr{SO}(2n)))$. The action on $\mr{Hom}(H_n,\pi_n(\mr{SO}(2n)))$ is through addition, which is therefore transitive. When $n=3,7$ the lemma shows that $I_g$ maps to $\mr{Hom}(H_n,\pi_n(\mr{SO}(2n)))$ with cokernel $\mr{Hom}(H_n, \bZ/2)$, so the orbits are in bijection with this set. For the stabiliser, Lemma \ref{lem:levine} shows that the kernel of $S\pi_n(\mr{SO}(n)) \to \pi_n(\mr{SO}(2n))$ is $0$ if $n$ is odd and $\bZ/2$ if $n$ is even.
\end{proof}

This proves Proposition \ref{prop:upper-bound} when $n \geq 3$ and $n \neq 3,7$, as $\mr{Str}^\mr{fr}_\ast(W_{g,1})/I_g$ already consists of a single point, so $\mr{Str}^\mr{fr}_\ast(W_{g,1})/\Gamma_g$ does too. 

To finish the argument in the cases $n=3,7$, we use quadratic refinements to give a more invariant description of $\mr{Str}^\mr{fr}_\ast(W_{g,1})/I_g$. Recall that in Section \ref{sec:QuadFram} we described a $\Gamma_g$-equivariant function $\Phi \colon \mr{Str}^\mr{fr}_\ast(W_{g,1}) \to \mr{Quad}(H_n, \lambda)$.

\begin{lemma}\label{lem:QuadStr}
	For $n=3,7$ the induced function
	\[\mr{Str}^\mr{fr}_\ast(W_{g,1}) / I_g \lra \mr{Quad}(H_n, \lambda)\]
	is a bijection.
\end{lemma}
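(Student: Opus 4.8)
The plan is to show that $\Phi$ descends to a well-defined map on $I_g$-orbits, and then to conclude by a cardinality count together with the surjectivity already established in Lemma \ref{lem:QuadStr1}.

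First I would observe that $\Phi$ is constant on $I_g$-orbits. Indeed, $\Phi$ is $\Gamma_g$-equivariant, and by construction (Section \ref{sec:QuadFram}) the action of $\Gamma_g$ on $\mr{Quad}(H_n, \lambda)$ factors through the quotient $\Gamma_g \to G'_g$. Since $I_g$ is precisely the kernel of this quotient by Theorem \ref{thm:kreck}, every element of $I_g$ acts as the identity on $\mr{Quad}(H_n, \lambda)$, so $\Phi(\gamma \cdot x) = \gamma \cdot \Phi(x) = \Phi(x)$ for all $\gamma \in I_g$. Hence $\Phi$ induces exactly the map $\bar{\Phi} \colon \mr{Str}^\mr{fr}_\ast(W_{g,1})/I_g \to \mr{Quad}(H_n, \lambda)$ appearing in the statement.

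Next I would compare cardinalities. By Proposition \ref{prop:framings-rel-point}(ii), for $n = 3, 7$ the orbit set $\mr{Str}^\mr{fr}_\ast(W_{g,1})/I_g$ is in bijection with $\mr{Hom}(H_n, \bZ/2)$, and hence has $2^{2g}$ elements since $H_n \cong \bZ^{2g}$. On the other side, $\mr{Quad}(H_n, \lambda)$ is a torsor over $\mr{Hom}(H_n, \bZ/2)$ and so also has $2^{2g}$ elements. Thus the source and target of $\bar{\Phi}$ are finite sets of the same cardinality. Finally, $\Phi$ is surjective by Lemma \ref{lem:QuadStr1} (valid precisely for $n = 3, 7$), so the induced map $\bar{\Phi}$ is surjective as well; a surjection between finite sets of equal cardinality is a bijection, completing the proof.

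The only genuinely delicate point is the first step: one must be certain that the $\Gamma_g$-action on quadratic refinements factors through $G'_g$, so that $I_g$ acts trivially and the descent of $\Phi$ is automatic. This is exactly the content of the action being defined "via $G'_g$" in Section \ref{sec:QuadFram}, combined with the identification of $I_g$ as the kernel of $\Gamma_g \to G'_g$ in Theorem \ref{thm:kreck}. Everything else is bookkeeping with the orbit count of Proposition \ref{prop:framings-rel-point}(ii) and the torsor structure of $\mr{Quad}(H_n, \lambda)$ recorded in Section \ref{sec:QuadFram}.
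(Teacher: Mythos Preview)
Your proof is correct and follows essentially the same approach as the paper: surjectivity from Lemma~\ref{lem:QuadStr1} combined with the cardinality count from Proposition~\ref{prop:framings-rel-point}(ii) and the torsor structure on $\mr{Quad}(H_n,\lambda)$. The only difference is that you spell out explicitly why $\Phi$ descends to $I_g$-orbits, whereas the paper treats this as implicit in the phrase ``induced function'' (since $\Phi$ is $\Gamma_g$-equivariant and the target action factors through $G'_g$).
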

\begin{proof}
By Lemma \ref{lem:QuadStr1} it is surjective. The target has $2^{2g}$ elements as it is a $\mr{Hom}(H_n, \bZ/2)$-torsor, and the domain has $2^{2g}$ elements by Proposition \ref{prop:framings-rel-point} (ii), so it is a bijection.
\end{proof}

Referring to the discussion in Section \ref{sec:ClassQuad}, it follows from the theorem of Arf that $\mr{Str}^\mr{fr}_\ast(W_{g,1}) / \Gamma_g$ consists of two elements, distinguished by the Arf invariants of their associated quadratic forms. This completes the proof of Proposition \ref{prop:upper-bound} in the cases $n=3,7$.

\subsection{The case $n=2$}\label{sec:RelPtNEq2}

When $n=2$ we have $\pi_n(\mr{SO}(2n))=0$ (e.g.~by Bott periodicity) so $\mr{Str}^\mr{fr}_\ast(W_{g,1})$ consists of a single point, so has a single $\Gamma_g$-orbit too. 

\subsection{The case $n=1$}\label{sec:RelPtNEq1}

As $\mr{Str}^{\fr}_\partial(D^2) = \pi_2(\mr{SO}(2))=0$, the orbit-stabiliser sequence \eqref{eq:FundSeq} gives a bijection $\mr{Str}^\mr{fr}_{*}(W_{g,1})/\Gamma_g \cong \mr{Str}^\mr{fr}_\partial(W_{g,1})/\Gamma_g$. For $g \geq 2$ the latter is described in \cite[Theorem 2.9]{RWFramed} (for framings one takes $r=0$) as having two elements, which proves Proposition \ref{prop:upper-bound} in this case. 

In fact the analogue of Lemma \ref{lem:QuadStr} also holds in this case. A framing of $W_{g,1}$ determines a Spin structure, which via a construction of Johnson \cite[Section 4]{JohnsonSpin} gives a quadratic refinement of the symplectic form $(H_1(W_{g,1};\bZ), \lambda)$. This construction yields a surjective map
	\[\mr{Str}^\mr{fr}_\ast(W_{g,1}) \lra \mr{Quad}(H_1, \lambda)\]
	which as long as $g \geq 2$ becomes, just as in Lemma \ref{lem:QuadStr}, a bijection after dividing out the action of the Torelli group $I_g$. This has been shown in \cite[Proposition 5.1]{CalderonSalter}. It may also be seen using the methods of \cite[Sections 2.3, 2.4]{RWFramed} and the fact that the Torelli group is generated by Dehn twists along separating curves and bounding pairs of curves.

\subsection{The group $G_g^{\fr, [[\ell]]}$}\label{sec:StabGammaDoubleBracketEll}

The results of the previous sections combine to give the following complete description of the group $G_g^{\fr, [[\ell]]}$.

\begin{proposition}\label{prop:IdentifyingDoubleBracketEll}
For $n \geq 1$, and $g \geq 2$ if $n=1$, we have
\[G_g^{\fr, [[\ell]]} = \begin{cases} \mr{Sp}_{2g}^{q\text{ or }a}(\bZ) & \text{if $n$ is 1, 3 or 7, and $\ell$ has Arf invariant $0$ or $1$,} \\
	\mr{Sp}_{2g}^q(\bZ) & \text{if $n$ is odd but not 1, 3 or 7,} \\
	\mr{O}_{g,g}(\bZ) & \text{if $n$ is even.}\end{cases}\]
\end{proposition}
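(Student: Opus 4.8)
The plan is to prove the identity $G_g^{\fr,[[\ell]]} = \mr{Stab}_{G'_g}(\mu_\ell)$, where $\mu_\ell = \Phi([[\ell]])$ is the quadratic refinement associated to $\ell$ (and for $n$ even one reads $\mr{Quad}(H_n,\lambda)$ as a point, so the stabiliser is all of $G'_g$), and then to identify this stabiliser with the groups named in the statement via the Arf classification of Section \ref{sec:ClassQuad}. Recall that $G_g^{\fr,[[\ell]]}$ is by definition the image in $G'_g$ of $\Gamma_g^{\fr,[[\ell]]} = \mr{Stab}_{\Gamma_g}([[\ell]])$, so both inclusions must be checked.

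For the inclusion $G_g^{\fr,[[\ell]]} \subseteq \mr{Stab}_{G'_g}(\mu_\ell)$ (only needed when $n$ is odd) I would invoke the $\Gamma_g$-equivariance of $\Phi \colon \mr{Str}^\mr{fr}_\ast(W_{g,1}) \to \mr{Quad}(H_n,\lambda)$, the action on the target being through $G'_g$. If $\phi \in \Gamma_g^{\fr,[[\ell]]}$ fixes $[[\ell]]$ and has image $B \in G'_g$, then $B$ fixes $\Phi([[\ell]]) = \mu_\ell$, giving the inclusion.

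The substantive step is the reverse inclusion, which I expect to be the main work. Given $B \in \mr{Stab}_{G'_g}(\mu_\ell)$, I would first lift it to some $\phi_0 \in \Gamma_g$ using surjectivity of $\Gamma_g \to G'_g$ (Theorem \ref{thm:kreck} for $n \geq 3$, folklore for $n=1$). By equivariance of $\Phi$ and the choice of $B$, the classes $\phi_0 \cdot [[\ell]]$ and $[[\ell]]$ have the same image $\mu_\ell$ under $\Phi$ (this is automatic when $n$ is even). The key geometric input is then that $I_g$ acts transitively on each fibre of $\Phi$: for $n \neq 3,7$ this is the single-orbit statement of Proposition \ref{prop:framings-rel-point}(i), and for $n=3,7$ it is the bijection $\mr{Str}^\mr{fr}_\ast(W_{g,1})/I_g \cong \mr{Quad}(H_n,\lambda)$ of Lemma \ref{lem:QuadStr}. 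Hence there is $\psi \in I_g$ with $\psi \cdot (\phi_0 \cdot [[\ell]]) = [[\ell]]$; since $\psi \in I_g$ acts trivially on $H_n$, the composite $\psi\phi_0$ lies in $\Gamma_g^{\fr,[[\ell]]}$ and still induces $B$ on homology, so $B \in G_g^{\fr,[[\ell]]}$.

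It then remains to identify $\mr{Stab}_{G'_g}(\mu_\ell)$. For $n$ even it is all of $\mr{O}_{g,g}(\bZ)$. For $n$ odd but not $1,3,7$, Lemma \ref{lem:Fig8} gives $\mu^\fr \circ i = 0$, so $\mu_\ell$ is independent of $\ell$ and equals the standard form $H(0)^{\oplus g}$ of Section \ref{sec:AQuadForm}; thus the stabiliser is all of $G'_g = \mr{Sp}^q_{2g}(\bZ)$. For $n = 1,3,7$ we have $G'_g = \mr{Sp}_{2g}(\bZ)$ and $\mu_\ell$ can have Arf invariant $0$ or $1$ (by surjectivity of $\Phi$, Lemma \ref{lem:QuadStr1}, for $n=3,7$, and the Johnson-type construction of Section \ref{sec:RelPtNEq1} for $n=1$); by Arf's theorem $\mu_\ell$ is then isomorphic to $H(0)^{\oplus g}$ or $H(1)\oplus H(0)^{\oplus g-1}$, with stabilisers $\mr{Sp}^q_{2g}(\bZ)$ and $\mr{Sp}^a_{2g}(\bZ)$. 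The two subtleties I anticipate are: first, for $n=1,3,7$ the identification with the \emph{standard} groups $\mr{Sp}^{q\text{ or }a}_{2g}(\bZ)$ is only literal after a symplectic change of basis carrying $\mu_\ell$ to its standard representative, i.e.\ it holds up to conjugacy, which I would absorb into a suitable choice of $\ell$ and basis; and second, the case $n=1$ lies outside the range of Kreck's theorem, so the transitivity of $I_g$ on $\Phi$-fibres must be imported from the surface-specific results cited in Section \ref{sec:RelPtNEq1} rather than from Proposition \ref{prop:framings-rel-point}.
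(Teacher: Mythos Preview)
Your proof is correct and is essentially the paper's own argument made explicit: the paper simply observes that $G_g^{\fr,[[\ell]]}$ is the stabiliser in $G'_g=\Gamma_g/I_g$ of the image of $[[\ell]]$ in $\mr{Str}^\fr_\ast(W_{g,1})/I_g$, and then identifies this quotient set as a point (for $n\neq 1,3,7$) or as $\mr{Quad}(H_n,\lambda)$ (for $n=1,3,7$), which is precisely your lift-and-correct argument packaged as an orbit--stabiliser statement. One small omission: you flag $n=1$ as lying outside Kreck's range but not $n=2$; the paper handles $n=2$ separately, using that $\pi_2(\mr{SO}(4))=0$ makes $\mr{Str}^\fr_\ast(W_{g,1})$ already a single point (so neither Proposition~\ref{prop:framings-rel-point} nor an $I_g$-argument is needed) and citing Wall for the surjectivity of $\Gamma_g\to\mr{O}_{g,g}(\bZ)$.
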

\begin{proof}
If $n=2$ then as in Section \ref{sec:RelPtNEq2} there is only one framing relative to a point, so $\Gamma_g^{\fr, [[\ell]]} = \Gamma_g$ and hence $G_g^{\fr, [[\ell]]} = G'_g$.

For $n \neq 2$ the group $G_g^{\fr, [[\ell]]}$ is the stabiliser of the class of $\ell$ in $\mr{Str}^\mr{fr}_\ast(W_{g,1}) / I_g$ under the residual $\Gamma_g/I_g = G'_g$-action. If $n \neq 1, 3,7$ then this set has a single element by Proposition \ref{prop:framings-rel-point} (i) and so the stabiliser is $G'_g$ itself. If $n=1, 3,7$ then this set is identified with $\mr{Quad}(H_n, \lambda)$ in Lemma \ref{lem:QuadStr}, so $G_g^{\fr, [[\ell]]}$ is the stabiliser of the quadratic form determined by $\ell$. As there are two orbits of quadratic forms, distingushed by their Arf invariant, this stabiliser is conjugate to $\mr{Sp}^q_{2g}(\bZ)$ if the Arf invariant is 0, and to $\mr{Sp}^a_{2g}(\bZ)$ if the Arf invariant is 1.
\end{proof}

For $2n \geq 6$ there is by definition an extension
\[0 \lra I_g^{\fr, [[\ell]]} \lra \Gamma_g^{\fr, [[\ell]]} \lra G_g^{\fr, [[\ell]]} \lra 0\]
and in Proposition \ref{prop:framings-rel-point} we have shown that there is an extension
\[0 \lra \Theta_{2n+1} \lra  I_g^{\fr, [[\ell]]} \lra \begin{cases}
0 & \text{ if $n$ is odd}\\
\mr{Hom}(H_n, \bZ/2) &\text{ if $n$ is even}
\end{cases} \lra 0.\]
With Proposition \ref{prop:IdentifyingDoubleBracketEll} this gives a description of $\Gamma_g^{\fr, [[\ell]]}$ analogous to the theorem of Kreck (Theorem \ref{thm:kreck}).

\section{Arithmetic groups} 

\subsection{Abelianisations of some arithmetic groups}\label{sec:Abs}

By stabilising by direct sum with a hyperbolic form, of Arf invariant 0 in the case of quadratic structures, we have stable groups
\[\mr{Sp}_\infty(\bZ), \quad \mr{Sp}^q_\infty(\bZ), \quad \mr{Sp}^a_\infty(\bZ), \quad \mr{O}_{\infty, \infty}(\bZ).\]
Furthermore, as $H(0)^{\oplus 2} \cong H(1)^{\oplus 2}$ there is a direct system of groups containing both $\{\mr{Sp}^q_{2g}(\bZ)\}_{g \geq 0}$ and $\{\mr{Sp}^a_{2g}(\bZ)\}_{g \geq 0}$ cofinally, so $\mr{Sp}^q_\infty(\bZ) \cong \mr{Sp}^a_\infty(\bZ)$. The abelianisations of these stable groups are well-known:
\begin{align*}
H_1(\mr{Sp}_\infty(\bZ)) &= 0,\\
H_1(\mr{Sp}^q_\infty(\bZ)) &= H_1(\mr{Sp}^a_\infty(\bZ)) = \bZ/4,\\
H_1(\mr{O}_{\infty, \infty}(\bZ)) &= (\bZ/2)^2.
\end{align*}
These are collected from the literature in \cite[Proposition 2.2]{grwabelian}. Such automorphism groups of quadratic forms over $\bZ$ enjoy homological stability: in the generality needed here this may be found in \cite[Theorem 3.25]{Friedrich}, but for some of these groups it was known much earlier. We shall only need to know that the abelianisation of the $g=1$ group surjects onto to the abelianisation of the $g=\infty$ one, but we give complete information about their abelianisations for all $g$ in Table \ref{tab.h1}.

\begin{table}[h]
	\centering
	\caption{The first homology groups of $\mr{Sp}_{2g}(\bZ)$, $\mr{Sp}^q_{2g}(\bZ)$, $\mr{Sp}^a_{2g}(\bZ)$, and $\mr{O}_{g,g}(\bZ)$.}
	\label{tab.h1}
	\begin{tabular}{lcccccccc}
		\toprule
		$g$ & 1        & 2         & $\geq 3$     \\ \midrule
		$H_1(\mr{Sp}_{2g}(\bZ))$  & $\bZ/12$ & $\bZ/2$ & $0$ \\
		$H_1(\mr{Sp}^q_{2g}(\bZ))$  & $\bZ/4 \oplus \bZ$ & $\bZ/4 \oplus \bZ/2$ & $\bZ/4$ \\ 
				$H_1(\mr{Sp}^a_{2g}(\bZ))$  & $\bZ/12$ & $\bZ/4$ & $\bZ/4$ \\ 
		$H_1(\mr{O}_{g,g}(\bZ))$  & $(\bZ/2)^2$ & $(\bZ/2)^3$ & $(\bZ/2)^2$ \\ \bottomrule
	\end{tabular}
\end{table}

\begin{remark}It may be helpful to alert the reader that \cite[page 645]{kreckisotopy} states incorrectly that $\mr{O}_{1,1}(\bZ) = \bZ/4$, a mistake going back to \cite{Sato}.
\end{remark}

\subsubsection{The (quadratic) symplectic groups} 

The first homology group of $\mr{Sp}_{2g}(\bZ)$ and $\mr{Sq}^q_{2g}(\bZ)$ in low genus is tabulated in \cite[Lemma A.1]{KrannichMCG}, and that of $\mr{Sp}^a_{2g}(\bZ)$ has recently been calculated by Sierra \cite{SierraSpin} and will appear in his forthcoming Cambridge PhD thesis. They are as shown in Table \ref{tab.h1}. 

\begin{lemma}\label{lem:SpaCalc}
Each of the stabilisation maps $H_1(\mr{Sp}_{2}(\bZ)) \to H_1(\mr{Sp}_\infty(\bZ))$, $H_1(\mr{Sp}^q_{2}(\bZ)) \to H_1(\mr{Sp}^q_\infty(\bZ))$, and $H_1(\mr{Sp}^a_{2}(\bZ)) \to H_1(\mr{Sp}^a_\infty(\bZ))$ is surjective.
\end{lemma}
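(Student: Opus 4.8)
The plan is to reduce all three statements to locating, inside the genus-one group, an explicit element whose stable class generates the abelianisation, treating the two quadratic cases uniformly by means of transvections.

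First, the map $H_1(\mr{Sp}_2(\bZ)) \to H_1(\mr{Sp}_\infty(\bZ))$ is surjective for the trivial reason that its target vanishes (Table \ref{tab.h1}), so only the two quadratic cases require work. In those cases the target is $\bZ/4$, so surjectivity is equivalent to exhibiting a single element of the genus-one group whose image generates $\bZ/4$. The natural candidate is a transvection. For a primitive $v \in H_n$ one has the symplectic transvection $T_v(x) = x + \lambda(x,v)\,v$, and a direct computation with \eqref{eq:QuadProp} shows that $T_v$ preserves a quadratic refinement $\mu$ precisely when $\mu(v) = 1$. In the genus-one group $\mr{Sp}^q_2(\bZ) = \aut(H(0))$ the vector $e+f$ satisfies $\mu_0(e+f) = 1$, so $T_{e+f} \in \mr{Sp}^q_2(\bZ)$; and since the unique Arf-$1$ refinement at genus one is preserved by all of $\mr{Sp}_2(\bZ)$ we have $\mr{Sp}^a_2(\bZ) = \aut(H(1)) = \mr{Sp}_2(\bZ)$, in which $\mu(e)=1$ gives $T_e \in \mr{Sp}^a_2(\bZ)$.

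After stabilisation both $T_{e+f}$ and $T_e$ become transvections along vectors of $\mu$-value $1$ in the relevant stable group. By Witt's theorem the automorphism group of a stable quadratic form acts transitively on primitive vectors of a fixed $\mu$-value, so all such transvections are conjugate and hence define a single class $t$ in the abelianisation. I would use the explicit isomorphism $H(1)^{\oplus 2} \cong H(0)^{\oplus 2}$ of quadratic forms to identify $\mr{Sp}^a_\infty(\bZ)$ with $\mr{Sp}^q_\infty(\bZ)$; as this isomorphism preserves $\mu$, it carries $\mu=1$ transvections to $\mu=1$ transvections, so the classes $t$ coming from $T_e$ and from $T_{e+f}$ agree under the identification. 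Thus both genus-one maps have the same image, generated by $t$.

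The main obstacle is the single remaining point: that $t$ has order $4$, that is, generates the stable $\bZ/4$ rather than a proper subgroup. For this I would appeal to the explicit description of $H_1(\mr{Sp}^q_\infty(\bZ)) = H_1(\mr{Sp}^a_\infty(\bZ)) = \bZ/4$ assembled in \cite[Proposition 2.2]{grwabelian}, under which the generator is represented by precisely such a transvection; equivalently, one evaluates the $\bZ/4$-valued character realising the abelianisation on $T_{e+f}$ and checks that the value is a generator. Granting this, the image of each genus-one map contains a generator of $\bZ/4$, which gives surjectivity. The values at $g=1$ recorded in Table \ref{tab.h1} are consistent with this conclusion, since both $\bZ/4 \oplus \bZ$ and $\bZ/12$ admit surjections onto $\bZ/4$.
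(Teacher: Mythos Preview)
Your argument is structurally sound and genuinely different from the paper's. The paper disposes of the $\mr{Sp}^q$ case by direct citation of \cite[Lemma A.1 (iii)]{KrannichMCG}, and for the $\mr{Sp}^a$ case it makes the isomorphism $H(1)^{\oplus 2}\cong H(0)^{\oplus 2}$ explicit, writes the stabilisation of the order-$4$ element $S=\begin{bsmallmatrix}0&-1\\1&0\end{bsmallmatrix}\in\mr{SL}_2(\bZ)=\mr{Sp}^a_2(\bZ)$ in the new basis, and then evaluates the Johnson--Millson character $A\colon\mr{Sp}^q_4(\bZ)\to\bZ/4$ on the resulting matrix to get a unit. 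Your uniform transvection approach is cleaner in principle: once one knows that a $\mu{=}1$ transvection represents a generator of the stable $\bZ/4$, both quadratic cases follow at once from the Witt transitivity you invoke.

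The gap is exactly where you flag it. You defer the claim that the transvection class has order $4$ to \cite[Proposition 2.2]{grwabelian}, but as the paper itself indicates, that proposition only \emph{collects the values} of the stable abelianisations; it does not identify a generator with a transvection. Your alternative ``equivalently, one evaluates the $\bZ/4$-valued character on $T_{e+f}$ and checks'' is precisely the work the paper carries out via Johnson--Millson, and it is not done here. Note also that the paper's element $S$ is not itself a transvection (since $S-I$ has rank $2$), so the paper's computation does not immediately yield your claim either. To complete your line one would have to either locate a reference that genuinely asserts the transvection is a stable generator, or perform the analogous Johnson--Millson evaluation on the stabilisation of $T_{e+f}$ (or of $T_e$ after passing through the explicit $H(1)^{\oplus 2}\cong H(0)^{\oplus 2}$); until that is done the argument is incomplete at its decisive step.
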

\begin{proof}
In the first case there is nothing to show as the stable homology is trivial. The second case is \cite[Lemma A.1 (iii)]{KrannichMCG}.

In the third case observe that as in Example \ref{exam:arf1} the number of quadratic refinements of $(H_n(W_{1,1};\bZ), \lambda)$ having Arf invariant 1 is $2^{2-1}-2^{1-1} = 1$, so we have $\mr{Sp}^a_2(\bZ) = \mr{Sp}_2(\bZ) = \mr{SL}_2(\bZ)$. The first homology of this group is $\bZ/12$ and an element of order 4 is represented by the matrix
\[S \coloneqq \begin{bmatrix} 0 & -1 \\
1 & 0 \end{bmatrix}.\]
Stabilising $H(1)$ by taking the direct sum with another copy of $H(1)$, we get a stabilisation map $\mr{Sp}^a_2(\bZ) \to \mr{Sp}^q_4(\bZ)$. We shall compute its image under the stable abelianisation map $A \colon \mr{Sp}^q_4(\bZ) \to \bZ/4$ using the work of Johnson and Millson \cite{JohnsonMillson} (they denote $A$ by $\lambda$).

To do so, we need to make explicit the isomorphism $H(1) \oplus H(1) \cong H(0) \oplus H(0)$. Let $e_1,f_1,e_2,f_2$ be the standard ``hyperbolic'' basis of $H(1) \oplus H(1)$. Making explicit this isomorphism amounts to finding a basis $\tilde{e}_1,\tilde{f}_1,\tilde{e}_2,\tilde{f}_2$ of $H(1) \oplus H(1)$ satisfying 
\begin{itemize}
	\item $\lambda(\tilde{e}_i,\tilde{e}_j) = 0 = \lambda(\tilde{f}_i,\tilde{f}_j)$,
	\item $\lambda(\tilde{e}_i,\tilde{f}_j) = \delta_{ij}$, and
	\item $\mu(\tilde{e}_i) = \mu(\tilde{f}_i) = 0$.
\end{itemize}
The choice
\[\tilde{e}_1 = e_1+e_2 \qquad \tilde{f}_1 = f_1+e_1+e_2 \qquad \tilde{e}_2 = e_2-f_1+f_2 \qquad \tilde{f}_2 = -f_1+f_2\]
will do. Writing $S \oplus \begin{bsmallmatrix} 1 & 0 \\
0 & 1 \end{bsmallmatrix}$ in terms of the basis $\{\tilde{e}_1,\tilde{e}_2,\tilde{f}_1,\tilde{f}_2\}$ gives the matrix
\[\tilde{S} = \begin{bmatrix} 1 & 1 & -1 & -1 \\ -2 & 2 & 1 & -1 \\ 2 & 0 & -1 & 0 \\ -2 & 1 & 1 & 0\end{bmatrix}.\]

Johnson and Millson give an explicit formula for the stable abelianisation map $A \colon \mr{Sp}^q_4(\bZ) \to \bZ/4 = \{1,-1,i,-i\}$ \cite[pages 147-148]{JohnsonMillson} (their conventions are the reason for reordering our basis). Evaluated on $\tilde{S}$ we are in the case ``2 (ii)'', and $A(\tilde{S}) = i^{-1} \epsilon(1\cdot 1-(-2) \cdot 1) = i$, which has order 4.
\end{proof}

\subsubsection{The orthogonal groups} We provide the analogue of the results of the last section for the groups $\mr{O}_{g,g}(\bZ)$. 

\begin{lemma}\label{lem:h1-gg-improv-even} 
For $g \neq 2$ the first homology groups of $\mr{O}_{g,g}(\bZ)$ are as in Table \ref{tab.h1}. The stabilisation map $H_1(\mr{O}_{1,1}(\bZ)) \to H_1(\mr{O}_{\infty,\infty}(\bZ))$ is a surjection.
\end{lemma}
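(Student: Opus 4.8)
The plan is to prove Lemma \ref{lem:h1-gg-improv-even} by first establishing the abelianisations of $\mr{O}_{g,g}(\bZ)$ for $g \neq 2$ (i.e. $g = 1$ and $g \geq 3$) as recorded in Table \ref{tab.h1}, and then verifying the surjectivity of the stabilisation map on first homology from the bottom group. For the computation of $H_1(\mr{O}_{g,g}(\bZ))$ in the stable range $g \geq 3$, I would invoke the homological stability results cited earlier (\cite[Theorem 3.25]{Friedrich}) together with the stable value $H_1(\mr{O}_{\infty,\infty}(\bZ)) = (\bZ/2)^2$ from \cite[Proposition 2.2]{grwabelian}, so that $H_1(\mr{O}_{g,g}(\bZ)) = (\bZ/2)^2$ for $g$ large; one must check that stability already holds at $g = 3$ in the sense needed here. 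The genus $g = 1$ case must be treated by hand, since it lies outside the stable range and is precisely where \cite{kreckisotopy}/\cite{Sato} erred.

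\smallskip

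First I would identify $\mr{O}_{1,1}(\bZ)$ explicitly. The form is the rank-2 hyperbolic lattice $H(0) = \bZ\{e,f\}$ with $\lambda(e,f)=1$, and its isometry group over $\bZ$ is a finite group: the isometries are generated by the hyperbolic reflection swapping $e \leftrightarrow f$, the sign change $(e,f) \mapsto (-e,-f)$, and the swap-with-sign $(e,f)\mapsto (-f,-e)$ or $(f,e)$, giving $\mr{O}_{1,1}(\bZ) \cong (\bZ/2)^2$ (the Klein four-group), whose abelianisation is itself $(\bZ/2)^2$. This matches the Table \ref{tab.h1} entry and exhibits the correction to the $\bZ/4$ claim of \cite{kreckisotopy,Sato}. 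I would write out the four isometry matrices in the basis $\{e,f\}$ so that the group structure is unambiguous.

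\smallskip

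For the surjectivity of $H_1(\mr{O}_{1,1}(\bZ)) \to H_1(\mr{O}_{\infty,\infty}(\bZ))$, the cleanest route is to exhibit explicit generators of the stable abelianisation $(\bZ/2)^2$ and realise them already in genus $1$. The two $\bZ/2$ factors stably are detected by the determinant (the $\mr{O}$ versus $\mr{SO}$ distinction) and by the spinor norm (or equivalently a reflection-counting invariant); both of these restrict compatibly along the stabilisation map. Since both the $e\leftrightarrow f$ reflection and the $(e,f)\mapsto(-e,-f)$ element of $\mr{O}_{1,1}(\bZ)$ have determinant $-1$ and differ by an element detected by the second invariant, their images span $(\bZ/2)^2$, giving surjectivity. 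Concretely, I would compute the determinant and spinor norm of each of the four genus-$1$ isometries and check that two of them map to independent classes stably.

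\smallskip

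The main obstacle I anticipate is making the two stable invariants on $\mr{O}_{\infty,\infty}(\bZ)$ precise enough to evaluate on genus-$1$ elements: the determinant is unambiguous, but the second $\bZ/2$ (spinor norm versus the Dickson/Arf-type invariant, and whether one works over $\bZ$ or reduces mod $2$) requires care to match the conventions of \cite{grwabelian}. Rather than deriving these invariants from scratch, I would cite the identification of $H_1(\mr{O}_{\infty,\infty}(\bZ)) = (\bZ/2)^2$ with $(\det, \text{spinor norm})$ from the literature and simply verify that the two chosen genus-$1$ reflections hit both coordinates, analogous to how Lemma \ref{lem:SpaCalc} handles the symplectic case by explicit evaluation of a known abelianisation formula.
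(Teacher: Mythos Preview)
Your treatment of $g=1$ and of the stabilisation surjectivity is essentially the same as the paper's: the paper simply says that $(\det, \text{spinor norm})\colon \mr{O}_{1,1}(\bZ) \to (\bZ/2)^2$ is checked directly to be an isomorphism, from which surjectivity onto the stable abelianisation is immediate. Your more explicit enumeration of the four isometries is fine (though the third element you list is just the product of the first two).

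The genuine difference is in how you handle $g \geq 3$. You propose to invoke Friedrich's homological stability together with the known stable value, and you correctly flag that this requires checking that stability already holds at $g=3$; but you do not resolve this, and general stability theorems of this type do not obviously give the isomorphism at $g=3$. The paper avoids this issue entirely by a different route: it cites Hahn--O'Meara to show that for $g \geq 2$ the kernel $\mr{O}'_{g,g}(\bZ)$ of $(\det, \text{spinor norm})$ coincides with the elementary subgroup, and that this elementary subgroup is perfect for $g \geq 3$. This immediately gives $H_1(\mr{O}_{g,g}(\bZ)) \cong (\bZ/2)^2$ for every $g \geq 3$, with no appeal to a stability range. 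So the paper's argument is both more self-contained and sharper at the borderline $g=3$; your approach would need an independent argument there.
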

\begin{proof} 
We shall use results of Hahn--O'Meara: \cite[Theorem 9.2.8]{HahnOMeara} says the kernel $\mr{O}'_{g,g}(\bZ)$ of the map $\mr{O}_{g,g}(\bZ) \to (\bZ/2)^2$ given by the determinant and spinor norm is equal to the subgroup generated by elementary matrices for $g \geq 2$, c.f.~\cite[p.~232]{HahnOMeara}. (The subgroup $\mr{O}'_{g,g}(\bZ)$ is defined in \cite[Section 6.4C]{HahnOMeara} as the kernel of $(\Theta,R)$ with $\Theta$ the spinor norm  \cite[6.4A]{HahnOMeara} and $R$ the residue homomorphism \cite[p.~346]{HahnOMeara}, which is equal to the determinant for the ring $\bZ$ as $\chi(\bZ) = 0$.) By \cite[5.3.8]{HahnOMeara} this subgroup is perfect for $g \geq 3$. It follows that $(\bZ/2)^2$ is the abelianisation for $g \geq 3$.

For $g=1$ it is easy to verify that the determinant and spinor norm map $\mr{O}_{1,1}(\bZ) \to (\bZ/2)^2$ is an isomorphism of groups, and the claim about the stabilisation map follows from this.
\end{proof}

To complete Table \ref{tab.h1} it remains to describe the case $g=2$, though this is not used in the proof of Theorem \ref{thm:main}.

\begin{lemma} 
$H_1(\mr{O}_{2,2}(\bZ)) \cong (\bZ/2)^3$.
\end{lemma}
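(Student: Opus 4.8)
The plan is to exploit the exceptional isogeny $\mr{Spin}(2,2) \cong \mr{SL}_2 \times \mr{SL}_2$ to give an explicit model for $\mr{O}_{2,2}(\bZ)$ and then compute its abelianisation by hand. Realise the hyperbolic form $H(0)^{\oplus 2}$ as $M_2(\bZ)$ equipped with the determinant form, so that $\mr{O}_{2,2}(\bZ)$ is the group of lattice automorphisms of $M_2(\bZ)$ preserving $\det$. Two families of such automorphisms are visible: the maps $\rho(g,h)\colon X \mapsto gXh^{-1}$ for $(g,h)$ in $K \coloneqq \{(g,h) \in \mr{GL}_2(\bZ)^2 : \det g = \det h\}$, and the transpose $\tau \colon X \mapsto X^T$. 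A direct computation shows $\ker(\rho) = \{\pm(I,I)\}$ and, since $\det\rho(g,h) = (\det g)^2(\det h)^2 = 1$, that $\mr{im}(\rho) \subseteq \mr{SO}_{2,2}(\bZ)$, while $\det(\tau) = -1$ and $\tau^2 = 1$. Writing $K = \mr{SL}_2(\bZ)^2 \rtimes \langle(J,J)\rangle$ with $J = \mr{diag}(1,-1)$ acting diagonally by conjugation, the goal of the identification step is $\mr{SO}_{2,2}(\bZ) \cong K/\{\pm(I,I)\}$ and $\mr{O}_{2,2}(\bZ) = \mr{SO}_{2,2}(\bZ) \rtimes \langle\tau\rangle$.

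The first and main task is to prove this generation statement, i.e. that $\rho$ surjects onto $\mr{SO}_{2,2}(\bZ)$. For this I would use the previous lemma: by Hahn--O'Meara the subgroup $\mr{O}'_{2,2}(\bZ)$, the kernel of determinant and spinor norm, is generated by elementary orthogonal matrices, and each such generator is readily checked to be $\rho$ of an elementary (unipotent) matrix in $\mr{SL}_2(\bZ)$, so that $\mr{O}'_{2,2}(\bZ) = \rho(\mr{SL}_2(\bZ)^2)$. Since one checks $\rho(J,J)$ has nontrivial spinor norm and $\tau$ has determinant $-1$, the classes of $\mr{O}'_{2,2}(\bZ)$, $\rho(J,J)$ and $\tau$ exhaust $\mr{O}_{2,2}(\bZ)/\mr{O}'_{2,2}(\bZ) \cong (\bZ/2)^2$; combined with $\mr{O}'_{2,2}(\bZ) \subseteq \mr{im}(\rho) \subseteq \mr{SO}_{2,2}(\bZ)$ this forces $\mr{im}(\rho) = \mr{SO}_{2,2}(\bZ)$ and, as $\tau \notin \mr{SO}_{2,2}(\bZ)$ with $\tau^2=1$, the claimed semidirect-product decomposition.

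With this structure in hand the abelianisation is a short computation using the formula $(N \rtimes H)^{\mr{ab}} \cong (N^{\mr{ab}})_H \oplus H^{\mr{ab}}$. Because $J$-conjugation inverts the standard generators $S,T$ of $\mr{SL}_2(\bZ)$, it acts by $-1$ on $H_1(\mr{SL}_2(\bZ)) = \bZ/12$, so $(J,J)$ acts by $-1$ on each factor of $H_1(\mr{SL}_2(\bZ)^2) = (\bZ/12)^2$ and its coinvariants are $(\bZ/2)^2$; hence $H_1(K) \cong (\bZ/2)^2 \oplus \bZ/2$. The class of $(-I,-I)$ maps to $(6,6) \in (\bZ/12)^2$, which dies in these coinvariants, so the central $\bZ/2 = \langle(-I,-I)\rangle$ acts trivially on $H_1(K)$ and $H_1(\mr{SO}_{2,2}(\bZ)) \cong H_1(K) \cong (\bZ/2)^3$. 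Finally $\tau$ acts on $\mr{SO}_{2,2}(\bZ)$ by swapping the two $\mr{SL}_2$-factors, since the inverse-transpose is the inner automorphism given by conjugation by $S$, and it fixes the class of $(J,J)$; its coinvariants on $H_1(\mr{SO}_{2,2}(\bZ))$ are therefore $(\bZ/2)^2$, and adding the $\bZ/2$ from $\langle\tau\rangle$ yields $H_1(\mr{O}_{2,2}(\bZ)) \cong (\bZ/2)^3$.

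I expect the generation step to be the main obstacle. Passing from the real isogeny to integral points can fail in general, so one must genuinely verify both that the elementary generators provided by Hahn--O'Meara lie in $\mr{im}(\rho)$ and that $\rho(J,J)$ and $\tau$ realise the determinant and spinor-norm cosets; only once $\mr{im}(\rho) = \mr{SO}_{2,2}(\bZ)$ is secured does the remaining homological bookkeeping become routine.
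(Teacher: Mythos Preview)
Your proposal is correct and follows essentially the same route as the paper: both exploit the exceptional isogeny by realising $H(0)^{\oplus 2}$ as $M_2(\bZ)$ with the determinant form and letting $\mr{SL}_2(\bZ)\times\mr{SL}_2(\bZ)$ act by $X\mapsto gXh^{-1}$, then compute the abelianisation by taking coinvariants for the residual $(\bZ/2)^2$ and appealing to a split extension. The differences are organisational. The paper cites Hahn--O'Meara's structural results (Theorem~7.2.21 and p.~434) to obtain directly the extension $1\to\bZ^\times\to\mr{Spin}_{2,2}(\bZ)\to\mr{O}'_{2,2}(\bZ)\to 1$ with $\mr{Spin}_{2,2}(\bZ)\cong\mr{SL}_2(\bZ)^2$, and then splits off the full $(\bZ/2)^2$ quotient via the stabilised $\mr{O}_{1,1}(\bZ)$; you instead enlarge to $K=\{(g,h):\det g=\det h\}$ and the transpose $\tau$ to build $\mr{SO}_{2,2}(\bZ)$ and then $\mr{O}_{2,2}(\bZ)$ as two successive $\bZ/2$-extensions, and propose to verify $\mr{im}(\rho)=\mr{SO}_{2,2}(\bZ)$ by checking that the Eichler generators of $\mr{O}'_{2,2}(\bZ)$ lie in $\rho(\mr{SL}_2(\bZ)^2)$. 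Your route is slightly more hands-on in establishing the group structure (you correctly flag this as the one nontrivial step) but avoids invoking the Spin theory; the paper's route is shorter here at the cost of a heavier citation. The subsequent coinvariant bookkeeping is equivalent, just sliced differently.
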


\begin{proof}
We claim that $\mr{O}'_{2,2}(\bZ) \coloneqq \mr{ker}(\det \oplus \mr{spin}\colon \mr{O}_{2,2}(\bZ) \to (\bZ/2)^2)$ is isomorphic to the group
\[\mr{SL}_2(\bZ) \times \mr{SL}_2(\bZ) / \left\langle \left(\begin{bsmallmatrix} -1 & 0 \\
	0 & -1 \end{bsmallmatrix}, \begin{bsmallmatrix} -1 & 0 \\
	0 & -1 \end{bsmallmatrix} \right)\right\rangle.\]
	By \cite[Theorem 7.2.21]{HahnOMeara} there is an extension
	\[1 \lra \bZ^\times \lra \mr{Spin}_{2,2}(\bZ) \lra \mr{O}'_{2,2}(\bZ) \lra 1,\]
	and by \cite[p.\ 434]{HahnOMeara} there is an exceptional isomorphism
	\[\mr{Spin}_{2,2}(\bZ) \cong \mr{SL}_2(\bZ) \times \mr{SL}_2(\bZ).\]
	
	We can make this explicit as follows. Consider the set $M_{2,2}(\bZ)$ of $(2 \times 2)$-matrices over $\bZ$, equipped with the bilinear form given by 
	\[\langle X,Y\rangle = \mr{tr}(X \Omega Y^t \Omega^t), \qquad \Omega = \begin{bmatrix} 0 & -1 \\
	1 & 0 \end{bmatrix}.\]
Explicitly it is given by
\[\left \langle \begin{bmatrix} a & b \\
c & d \end{bmatrix}, \begin{bmatrix} a' & b' \\
c' & d' \end{bmatrix} \right \rangle = ad'-bc'-cb'+da',\]
so is symmetric and even. This formula also makes it clear that
\[e_1 = \begin{bmatrix} 1 & 0 \\
0 & 0 \end{bmatrix}, \quad f_1 = \begin{bmatrix} 0 & 0 \\
0 & 1\end{bmatrix}, \quad e_2 = \begin{bmatrix} 0 & 1 \\
0 & 0 \end{bmatrix}, \quad f_2 = \begin{bmatrix} 0 & 0 \\-1 & 0\end{bmatrix}\]
provides a hyperbolic basis for $(M_{2,2}(\bZ), \langle-,-\rangle)$. There is a left action of $\mr{SL}_2(\bZ) \times \mr{SL}_2(\bZ)$ on $M_{2,2}(\bZ)$ by $(A,B) \cdot X = AXB^{-1}$, and one may check that this action preserves the form $\langle -,-\rangle$, using that $A^{-1} = \Omega A^t \Omega^t$ for $A \in \mr{SL}_2(\bZ)$. This describes the composition
$$\mr{SL}_2(\bZ) \times \mr{SL}_2(\bZ) \cong \mr{Spin}_{2,2}(\bZ) \lra \mr{O}_{2,2}(\bZ).$$
	
	We see that $\bZ^\times \to \mr{Spin}_{2,2}(\bZ)$ is given by $\left(\begin{bsmallmatrix} -1 & 0 \\
	0 & -1 \end{bsmallmatrix}, \begin{bsmallmatrix} -1 & 0 \\
	0 & -1 \end{bsmallmatrix} \right)$, which establishes the claim, and also that the elements $\left(\begin{bsmallmatrix} 1 & 1 \\
	0 & 1 \end{bsmallmatrix}, \begin{bsmallmatrix} 1 & 0 \\
	0 & 1 \end{bsmallmatrix} \right)$ and $\left(\begin{bsmallmatrix} 1 & 0 \\
	0 & 1 \end{bsmallmatrix}, \begin{bsmallmatrix} 1 & 1 \\
	0 & 1 \end{bsmallmatrix} \right)$ in $\mr{SL}_2(\bZ) \times \mr{SL}_2(\bZ)$ map to
	\[T_1 \coloneqq \begin{bmatrix}
1 & 0 & 0 & -1 \\
0 & 1 & 0 & 0 \\
0 & 1 & 1 & 0 \\
0 & 0 & 0 & 1
\end{bmatrix} \quad \text{ and } \quad T_2 \coloneqq\begin{bmatrix}
1 & 0 & 0 & 0 \\
0 & 1 & 0 & 1 \\
-1 & 0 & 1 & 0 \\
0 & 0 & 0 & 1
\end{bmatrix} \quad \text{respectively,}\]
with respect to the basis $(e_1, f_1, e_2, f_2)$.

We now calculate as follows. It is well known that $H_1(\mr{SL}_2(\bZ)) = \bZ/12$ is generated by $T \coloneqq \begin{bsmallmatrix} 1 & 1 \\
	0 & 1 \end{bsmallmatrix}$, and minus the identity matrix represents the element of order 2 in this group, so $H_1(\mr{O}'_{2,2}(\bZ))$ has a presentation as an abelian group by $\langle T_1, T_2 \, | \, 12 T_1, 12 T_2,  6(T_1+T_2)\rangle$,
	
	The group $\mr{O}_{1,1}(\bZ) = \left\langle\begin{bsmallmatrix} -1 & 0 \\
	0 & -1 \end{bsmallmatrix}, \begin{bsmallmatrix} 0 & 1 \\
	1 & 0 \end{bsmallmatrix} \right\rangle$ may be included in $\mr{O}_{2,2}(\bZ)$  by stabilisation, and is mapped isomorphically to $(\bZ/2)^2$ by the determinant and spinor norm. Thus the outer action of $(\bZ/2)^2$ on $\mr{O}'_{2,2}(\bZ)$ may be described by conjugating by (the stabilisations of) these two matrices. Conjugating by $\begin{bsmallmatrix} -1 & 0 \\
	0 & -1 \end{bsmallmatrix} \oplus \begin{bsmallmatrix} 1 & 0 \\
	0 & 1 \end{bsmallmatrix}$ acts as
	\[T_1 \mapsto T_1^{-1} \quad\quad T_2 \mapsto T_2^{-1}\]
	and conjugating by $\begin{bsmallmatrix} 0 & 1 \\
	1 & 0 \end{bsmallmatrix} \oplus \begin{bsmallmatrix} 1 & 0 \\
	0 & 1 \end{bsmallmatrix}$ acts as
	\[T_1 \mapsto T_2^{-1} \quad\quad T_2 \mapsto T_1^{-1}.\]
	Thus the coinvariants of the $(\bZ/2)^2$-action on $H_1(\mr{O}'_{2,2}(\bZ))$ are given by the abelian group presentation
	\[\langle T_1, T_2 \, | \, 12 T_1, 12 T_2,  6(T_1+T_2), 2T_1, 2T_2, T_1+T_2\rangle\]
	which simplifies to give $\bZ/2$. The argument is completed by considering the Lyndon--Hochschild--Serre spectral sequence for $1 \to \mr{O}'_{2,2}(\bZ) \to \mr{O}_{2,2}(\bZ) \to (\bZ/2)^2 \to 1$, which is split by the inclusion of $\mr{O}_{1,1}(\bZ) \cong (\bZ/2)^2$ into $\mr{O}_{2,2}(\bZ)$.
\end{proof}

\subsection{A vanishing result} 

The previous section computed the abelianisation of the arithmetic groups $G'_g$ and $G^{\mr{fr},[[\ell]]}_g$ arising in the discussion in Section \ref{sec:General}. Here we compute the abelianisation of the remaining arithmetic group $G^{\mr{fr},[\ell]}_g$, at least for $n \neq 1, 3$ and in the limit as $g$ tends to infinity. The strategy is quite different: the group $G^{\mr{fr},[\ell]}_g$ arises as a quotient of the framed mapping class group $\check{\Gamma}^{\fr, \ell}_g$, and the proof is based on geometric considerations of framed fibre bundles.

\begin{lemma}\label{lem:triv-fr-stable}
$H_1(G^{\mr{fr},[\ell]}_\infty) =0$ for $n \neq 1, 3$.
\end{lemma}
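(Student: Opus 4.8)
The plan is to use that $G^{\fr,[\ell]}_\infty$ is by construction a quotient of the stable framed mapping class group $\check{\Gamma}^{\fr,\ell}_\infty$, and then to compute the abelianisation of the latter by the methods of Galatius--Randal-Williams. Concretely, the homotopy orbit long exact sequence gives a surjection $\check{\Gamma}^{\fr,\ell}_g \twoheadrightarrow {\Gamma}^{\fr,[\ell]}_g$, and composing with the surjection ${\Gamma}^{\fr,[\ell]}_g \twoheadrightarrow G^{\fr,[\ell]}_g$ (the latter is a quotient by the very definition of $G^{\fr,[\ell]}_g$ as an image) produces a surjection $\check{\Gamma}^{\fr,\ell}_g \twoheadrightarrow G^{\fr,[\ell]}_g$, natural in $g$. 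Taking abelianisations and passing to $g=\infty$, the group $H_1(G^{\fr,[\ell]}_\infty)$ is a quotient of $H_1(\check{\Gamma}^{\fr,\ell}_\infty)$. It therefore suffices to show that the resulting surjection $H_1(\check{\Gamma}^{\fr,\ell}_\infty) \to H_1(G^{\fr,[\ell]}_\infty)$ is the zero map.

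First I would compute the source. Since the framing structure $\theta^{\fr}$ has $MT\theta^{\fr} \simeq \bS^{-2n}$, the Galatius--Randal-Williams theorem together with homological stability for framed moduli spaces identifies $H_*(\check{\Gamma}^{\fr,\ell}_\infty)$ with the homology of a path component of $\Omega^\infty \bS^{-2n}$. As $\pi_1$ of an infinite loop space is abelian, this gives $H_1(\check{\Gamma}^{\fr,\ell}_\infty) \cong \pi_1(\Omega^\infty_\bullet \bS^{-2n}) = \pi^{s}_{2n+1}$, a finite abelian group (for instance it already vanishes when $n=2$, since $\pi^{s}_5 = 0$, though not in general).

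The heart of the argument is to show that these stable classes carry \emph{trivial homological monodromy}. The monodromy map $\check{\Gamma}^{\fr,\ell}_g \to G^{\fr,[\ell]}_g \subseteq G'_g$ records the action on $H_n$ of the underlying diffeomorphism, and so factors through the forgetful map to the moduli space of unframed manifolds. The point to establish geometrically is that a generating set of $\pi^{s}_{2n+1} = H_1(\check{\Gamma}^{\fr,\ell}_\infty)$ can be represented by framed $W_{\infty,1}$-bundles over $S^1$ whose underlying smooth bundle is a product, i.e.\ by loops of framings on a fixed manifold, so that the monodromy diffeomorphism is the identity and its image in $G'_\infty$ is trivial. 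Since $H_1(\check{\Gamma}^{\fr,\ell}_\infty) \to H_1(G^{\fr,[\ell]}_\infty)$ is a homomorphism, killing a generating set kills it entirely, and surjectivity then forces $H_1(G^{\fr,[\ell]}_\infty)=0$. Under the scanning identification this reduces to the assertion that the map of Thom spectra $\bS^{-2n} \to MT\theta$ induced by forgetting the framing is zero on $\pi_1$.

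I expect the main obstacle to be exactly this last statement, and in particular the careful use of the hypothesis $n \neq 1, 3$. These are the cases $2n+1 = 3, 7$, where the stable stems $\pi^{s}_3 = \bZ/24$ and $\pi^{s}_7 = \bZ/240$ support a genuinely nontrivial homological monodromy class (consistently with $H_1(G^{\fr,[\ell]}_\infty)$ being nonzero there), so that the spectrum-level vanishing I need simply fails. By contrast, for the remaining $n$ — including $n = 7$, where $\pi^{s}_{15} \neq 0$ but the class nevertheless dies — one must verify that no element of $\pi^{s}_{2n+1}$ survives to a nonzero class in $H_1(G^{\fr,[\ell]}_\infty)$. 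Establishing this vanishing, rather than any formal manipulation, is where the real work lies.
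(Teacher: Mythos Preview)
Your setup is correct: the surjection $\check{\Gamma}^{\fr,\ell}_\infty \twoheadrightarrow G^{\fr,[\ell]}_\infty$ and the identification $H_1(\check{\Gamma}^{\fr,\ell}_\infty) \cong \pi^s_{2n+1}$ via Galatius--Randal-Williams are exactly what the paper uses, and the case $n=2$ is indeed disposed of by $\pi^s_5=0$. The difficulty is entirely in your final step, and there the proposal contains a genuine gap.

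Your plan is to represent generators of $\pi^s_{2n+1}$ by framed bundles whose \emph{underlying smooth bundle is trivial}, i.e.\ by elements in the image of $\pi_1(\mr{Bun}_\partial(\mr{Fr}(TW_{g,1}),\Theta_\fr;\ell_\partial)) \to \check{\Gamma}^{\fr,\ell}_g$. You then assert this ``reduces to'' the vanishing of $\pi_1(\bS^{-2n}) \to \pi_1(MT\theta)$. These two statements are not equivalent: the spectrum-level vanishing would only say that the composite $H_1(\check{\Gamma}^{\fr,\ell}_\infty) \to H_1(\Gamma_\infty)$ is zero, which is weaker than saying generators lift to the kernel of $\check{\Gamma}^{\fr,\ell}_g \to \Gamma_g$. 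Worse, neither statement implies what you actually need. The map to $G^{\fr,[\ell]}_\infty$ factors through the \emph{subgroup} $\Gamma^{\fr,[\ell]}_\infty \leq \Gamma_\infty$, so an element becoming a commutator in $\Gamma_\infty$ (or in $G'_\infty$) need not be a commutator in $G^{\fr,[\ell]}_\infty$. And the direct ``loops of framings'' approach is already in trouble at $n=4$: restricting to $g=0$ it asks for the $J$-homomorphism $\pi_9(\mr{SO}) = \bZ/2 \to \pi^s_9 = (\bZ/2)^3$ to be surjective, which it is not.

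The paper's argument avoids this by a different choice of ``trivial monodromy'' representatives. Rather than asking for trivial underlying diffeomorphism, it asks for the diffeomorphism to be \emph{supported in a disc}, i.e.\ to come from the stabilisation map $\check{\Gamma}^{\fr,\ell}_0 \to \check{\Gamma}^{\fr,\ell}_g$. Such classes act trivially on $H_n(W_{g,1})$ because they are the identity outside $D^{2n}$, so their image in $G^{\fr,[\ell]}_g$ is trivial. The substance of the proof is then the surjectivity of $\check{\Gamma}^{\fr,\ell}_0 \to \pi^s_{2n+1}$ for $n > 3$: smoothing theory identifies $\check{\Gamma}^{\fr,\ell}_0 \cong \pi_{2n+1}(\mr{Top}(2n))$, and one shows that $\pi_{2n+1}(\mr{Top}(2n)) \to \pi_{2n+1}(\mr{Top}) \to \pi_{2n+1}(G) = \pi^s_{2n+1}$ is surjective. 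The second map is onto because $\pi_*(G/\mr{Top}) = L_*(\bZ)$ vanishes in odd degrees; the first is onto by showing the connecting map $\pi_{2n+1}(\mr{Top},\mr{Top}(2n)) \to \pi_{2n}(\mr{Top}(2n))$ is injective, which follows from Paechter's computation of $\pi_{2n+1}(\mr{O},\mr{O}(2n))$, the identification $\pi_{2n+1}(\mr{O},\mr{O}(2n)) \cong \pi_{2n+1}(\mr{Top},\mr{Top}(2n))$, and the triviality of the derivative map $\Theta_{2n+1} \to \pi_{2n}(\mr{SO}(2n))$ (Lemma~\ref{lem:ThetaActsTriv}). This is precisely where the hypothesis $n>3$ enters.
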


It will follow by combining Corollary \ref{cor:IdentifyingSingleBracketEll}, Proposition \ref{prop:IdentifyingDoubleBracketEll}, and Table \ref{tab.h1} that this vanishing result does not hold for $n=1,3$.

\begin{proof}
Recall that we write $\check{\Gamma}_g^{\mr{fr},\ell} \coloneqq \pi_1(B\mr{Diff}^\mr{fr}_\partial(W_{g,1};\ell_\partial), \ell)$, so that ${\Gamma}_g^{\mr{fr},[\ell]}$ is the image of the forgetful map $\check{\Gamma}_g^{\mr{fr},\ell} \to \Gamma_g$. The composition
\begin{equation}\label{eq:triv-fr-stable}
\check{\Gamma}_0^{\mr{fr},\ell} = H_1(\check{\Gamma}_0^{\mr{fr},\ell}) \lra H_1(\check{\Gamma}_g^{\mr{fr},\ell}) \lra H_1({\Gamma}_g^{\mr{fr},[\ell]}) \lra H_1(G_g^{\mr{fr},[\ell]})
\end{equation}
is zero, as diffeomorphisms supported inside a disc act trivially on the middle homology of $W_{g,1}$. The two rightmost maps are surjective, as the underlying maps of groups are surjective. 

Let $B\mr{Diff}^\mr{fr}_\partial(W_{g,1};\ell_\partial)_{\ell}$ denote the path component of $B\mr{Diff}^\mr{fr}_\partial(W_{g,1};\ell_\partial)$ containing the framing $\ell$. Recall the notation $G \coloneqq \mr{colim}_{n \to \infty} \mr{hAut}_*(S^n)$ (unrelated to $G_g$ and its variants, but alas standard). By an application of \cite[Theorem 1.5]{grwstab2} there is a map
\[H_1(\check{\Gamma}_g^{\mr{fr},\ell}) = H_1(B\mr{Diff}^\mr{fr}_\partial(W_{g,1};\ell_\partial)_{\ell}) \lra H_1(\Omega^\infty_0 \mathbf{S}^{-2n}) = \pi_{2n+1}^s = \pi_{2n+1}(G)\]
which is an isomorphism in the limit $g \to \infty$ (this formulation is valid even if $2n=4$, as it does not rely on homological stability). Considering the target as framed cobordism, this map is given by the Pontrjagin--Thom construction.

If $n=2$ then we use that $\pi_5^s=0$, so that $\colim_{g \to \infty} H_1(\check{\Gamma}_g^{\mr{fr},\ell}) = 0$ surjects onto $\colim_{g \to \infty} H_1(G_g^{\mr{fr},[\ell]})$.

For $n > 3$ will show that $H_1(\check{\Gamma}_0^{\mr{fr},\ell}) \to \colim_{g \to \infty}H_1(\check{\Gamma}_g^{\mr{fr},\ell})$ is surjective, which with the fact that the composition \eqref{eq:triv-fr-stable} is zero gives the result. To do so we use smoothing theory to obtain an identification $\check{\Gamma}_0^{\mr{fr},\ell} = \pi_{2n+1}(\mr{Top}(2n))$, and it is a matter of interpreting the Pontrjagin--Thom construction to see that $\check{\Gamma}_0^{\mr{fr},\ell} \to \colim_{g \to \infty} H_1(\check{\Gamma}_g^{\mr{fr},\ell}) = \pi_{2n+1}^s = \pi_{2n+1}(G)$ agrees with the natural composition
\[\pi_{2n+1}(\mr{Top}(2n)) \lra \pi_{2n+1}(\mr{Top}) \lra \pi_{2n+1}(G).\]
The homotopy groups $\pi_i(G/\mr{Top})$ are identified with the simply-connected surgery obstruction groups $L_i(\bZ)$ \cite[p.\ 274]{kirbysiebenmann}, which vanish in odd degrees so it follows that the right map is surjective; thus we shall be done if we show that the left map is surjective.

To show this we will instead show that the connecting map
\[\partial \colon \pi_{2n+1}(\mr{Top}, \mr{Top}(2n)) \lra \pi_{2n}(\mr{Top}(2n))\]
is injective. From \cite[page 246]{kirbysiebenmann}, it follows that the map
\[\pi_{2n+1}(\mr{O}, \mr{O}(2n)) \lra \pi_{2n+1}(\mr{Top}, \mr{Top}(2n))\]
is an isomorphism, and as in \cite[Lemma 5.2]{grwabelian} by work of Paechter \cite{Paechter} (the relevant result can be found on p.~249, by taking $p=1$, $m \geq 3$, and $k=4s$ or $4s+2$) we have
\[\pi_{2n+1}(\mr{O}, \mr{O}(2n)) = \begin{cases}
\bZ/4 & n \text{ odd}\\
(\bZ/2)^2 & n \text{ even}.
\end{cases}\]
We therefore consider the diagram 
\[\begin{tikzcd}
& \pi_{2n+1}(\mr{Top}(2n)/\mr{O}(2n)) \dar \rar{\cong}& \pi_0(\mr{Diff}_\partial(D^{2n})) \\
\pi_{2n+1}(\mr{O}, \mr{O}(2n)) \rar \dar[swap]{\cong}& \pi_{2n}(\mr{O}(2n)) \dar \rar & \pi_{2n}(\mr{O})\\
\pi_{2n+1}(\mr{Top}, \mr{Top}(2n)) \rar& \pi_{2n}(\mr{Top}(2n)).
\end{tikzcd}\]
The top vertical map is zero: it corresponds to  the map sending a diffeomorphism to its derivative, which is trivial by Lemma \ref{lem:ThetaActsTriv}. Thus the bottom middle vertical map is injective. On the other hand we know the groups $\pi_{2n}(\mr{O}(2n))$ from Table \ref{tab.pi2no}, and the groups $\pi_{2n}(\mr{O})$ from Bott periodicity, so we may simply check that $\pi_{2n+1}(\mr{O}, \mr{O}(2n)) \to \pi_{2n}(\mr{O}(2n))$ must be injective for $n > 3$. It follows from commutativity of the square that the bottom map is injective, as required.
\end{proof}

\section{Comparing stabilisers and the proof of Theorem \ref{thm:main}}

In this section we wish to analyse the exact sequence
\begin{equation}\label{eq:HatF}
0 \lra {\Gamma}_{g}^{\fr, [\ell]} \lra {\Gamma}_g^{\fr, [[\ell]]} \overset{{f}_\ell}\lra \mr{Str}^\mr{fr}_\partial(D^{2n}) \cong \pi_{2n}(\mr{SO}(2n)),
\end{equation}
coming from \eqref{eq:FundSeq}.

\begin{proposition}\label{prop:FEllAnalysis}
For any $n > 0$ the map ${f}_\ell$ factors as
\[{\Gamma}_g^{\fr, [[\ell]]} \lra {G}_g^{\fr, [[\ell]]} \lra H_1({G}_g^{\fr, [[\ell]]}) \lra H_1({G}_\infty^{\fr, [[\ell]]}) \overset{h_\ell}\lra \pi_{2n}(\mr{SO}(2n))\]
for some $h_\ell$, where the first three maps are the natural quotient, abelianisation, and stabilisation maps.

\begin{enumerate}[(i)]
\item If $n=1,3$ then $\pi_{2n}(\mr{SO}(2n))=0$ (so the map ${f}_\ell$ is surjective).

\item If $n \not\equiv 0 \pmod 4$ and $n \not=1,3$ then $h_\ell$ is an isomorphism.

\item If $n \equiv 0 \pmod 4$ then $h_\ell$ is injective with image of index 2 in $\pi_{2n}(\mr{SO}(2n)) \cong (\bZ/2)^3$.
\end{enumerate}
\end{proposition}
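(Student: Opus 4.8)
The plan is to construct the asserted factorization and then pin down $h_\ell$ by trapping the order of $\mr{im}({f}_\ell)$ between two bounds, both equal to $|H_1({G}_\infty^{\fr,[[\ell]]})| = 4$. The starting point is that $\pi_{2n}(\mr{SO}(2n))$ is abelian and ${f}_\ell$ is a group homomorphism (as recorded after \eqref{eq:FundSeq}). Thus to factor ${f}_\ell$ through the quotient ${\Gamma}_g^{\fr,[[\ell]]}\to{G}_g^{\fr,[[\ell]]}$ it suffices to show ${f}_\ell$ vanishes on the kernel $I_g^{\fr,[[\ell]]}$; it then descends to ${G}_g^{\fr,[[\ell]]}$ and, the target being abelian, to $H_1({G}_g^{\fr,[[\ell]]})$.

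First I would prove the vanishing ${f}_\ell|_{I_g^{\fr,[[\ell]]}}=0$. By Proposition \ref{prop:framings-rel-point}, $I_g^{\fr,[[\ell]]}$ is an extension of $0$ (for $n$ odd) or $\mr{Hom}(H_n,\bZ/2)$ (for $n$ even) by $\Theta_{2n+1}$, and $\Theta_{2n+1}$ acts trivially on $\mr{Str}^\mr{fr}_\partial(W_{g,1})$ by Lemma \ref{lem:ThetaActsTriv}, so ${f}_\ell|_{\Theta_{2n+1}}=0$; for $n$ odd this already gives the claim. For $n$ even, I would use that ${f}_\ell$, being a homomorphism to an abelian group, is constant on conjugacy classes, so the induced homomorphism $\mr{Hom}(H_n,\bZ/2)\to\pi_{2n}(\mr{SO}(2n))$ is invariant under the conjugation action of ${\Gamma}_g^{\fr,[[\ell]]}$; since ${G}_g^{\fr,[[\ell]]}=\mr{O}_{g,g}(\bZ)$ by Proposition \ref{prop:IdentifyingDoubleBracketEll}, this action is the natural one, and the map factors through the coinvariants $\mr{Hom}(H_n,\bZ/2)_{\mr{O}_{g,g}(\bZ)}$. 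These vanish for $g\geq 2$: dually it suffices that $H_n\otimes\bF_2$ has no nonzero $\mr{O}_{g,g}(\bZ)$-invariant vector, which holds because $\mr{O}_{g,g}(\bZ)$ preserves the mod $2$ quadratic refinement $x\mapsto\tfrac12\lambda(x,x)\bmod 2$ and surjects onto the corresponding finite orthogonal group over $\bF_2$, which acts on each of its $\bar q$-level sets without fixed vectors. To reach the \emph{stable} group I would note that the construction is compatible with the genus stabilisation maps (extend diffeomorphisms and the framing over the new handle), so the maps $H_1({G}_g^{\fr,[[\ell]]})\to\pi_{2n}(\mr{SO}(2n))$ assemble into a map of the direct system $\{H_1({G}_g^{\fr,[[\ell]]})\}_g$ to the constant system; passing to the colimit $H_1({G}_\infty^{\fr,[[\ell]]})=\colim_g H_1({G}_g^{\fr,[[\ell]]})$ produces $h_\ell$ and delivers the factorization by the universal property of the colimit.

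Finally the numerics. Part (i) is immediate from Table \ref{tab.pi2no}, where the target is $0$. For (ii) and (iii) write $f_\ell=h_\ell\circ\Phi$, where $\Phi\colon{\Gamma}_g^{\fr,[[\ell]]}\to H_1({G}_\infty^{\fr,[[\ell]]})$ is the composite of the quotient, abelianisation and stabilisation maps; each is surjective (the last by Lemmas \ref{lem:SpaCalc} and \ref{lem:h1-gg-improv-even}), so $\mr{im}({f}_\ell)=\mr{im}(h_\ell)$ has order $\leq|H_1({G}_\infty^{\fr,[[\ell]]})|=4$. For the matching lower bound I would observe that $\Phi$ kills ${\Gamma}_g^{\fr,[\ell]}$: by naturality of stabilisation its restriction to ${\Gamma}_g^{\fr,[\ell]}$ factors through $H_1({G}_\infty^{\fr,[\ell]})$, which is $0$ by Lemma \ref{lem:triv-fr-stable} since here $n\neq 1,3$. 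Hence the surjection $\Phi$ descends to a surjection $\mr{im}({f}_\ell)={\Gamma}_g^{\fr,[[\ell]]}/{\Gamma}_g^{\fr,[\ell]}\twoheadrightarrow H_1({G}_\infty^{\fr,[[\ell]]})$, giving $|\mr{im}({f}_\ell)|\geq 4$. Combining the two bounds yields $|\mr{im}({f}_\ell)|=4$, and as $h_\ell$ is then a map out of a group of order $4$ whose image has order $4$ it is injective; by Table \ref{tab.pi2no} it is an isomorphism when $|\pi_{2n}(\mr{SO}(2n))|=4$ (case (ii)) and has image of index $2$ when $\pi_{2n}(\mr{SO}(2n))\cong(\bZ/2)^3$ (case (iii)).

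The hard part is the vanishing ${f}_\ell|_{I_g^{\fr,[[\ell]]}}=0$ for $n$ even. Unlike the $\Theta_{2n+1}$-part, this is not covered by Lemma \ref{lem:ThetaActsTriv}, and it is exactly what legitimises factoring through ${G}_g^{\fr,[[\ell]]}$ rather than merely through $H_1({\Gamma}_g^{\fr,[[\ell]]})$; everything downstream is then either formal (the colimit argument) or a direct appeal to the already-established Lemmas \ref{lem:triv-fr-stable}, \ref{lem:SpaCalc} and \ref{lem:h1-gg-improv-even}. The coinvariants computation settles it cleanly for $g\geq 2$, while the low-genus case (where the relevant invariants of $H_n\otimes\bF_2$ need not vanish) would have to be inspected separately.
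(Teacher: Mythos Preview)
Your overall strategy---kill $I_g^{\fr,[[\ell]]}$ to factor through $G_g^{\fr,[[\ell]]}$, then abelianise and stabilise, and finally trap $|\mr{im}(f_\ell)|$ between two copies of $4$ using Lemma~\ref{lem:triv-fr-stable}---is the same as the paper's, and your numerics paragraph is essentially identical to the paper's argument. The only structural difference is the order of operations: the paper stabilises first (factoring $f_\ell$ through $H_1(\Gamma_\infty^{\fr,[[\ell]]})$ by naturality of \eqref{eq:HatF} under boundary connect-sum) and only then passes to $H_1(G_\infty^{\fr,[[\ell]]})$, whereas you factor through $G_g^{\fr,[[\ell]]}$ at finite $g$ and stabilise afterwards. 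Doing it the paper's way makes the $g=1$ issue you flag at the end disappear automatically: once $h_\ell$ exists at $g=\infty$, the factorisation through $G_g^{\fr,[[\ell]]}\to H_1(G_g^{\fr,[[\ell]]})\to H_1(G_\infty^{\fr,[[\ell]]})$ at every finite $g$ follows from commutativity of the stabilisation square, with no need to compute coinvariants at $g=1$.

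There is, however, a genuine gap: the case $n=2$. Both results you invoke to control $I_g^{\fr,[[\ell]]}$---Proposition~\ref{prop:framings-rel-point} and Lemma~\ref{lem:ThetaActsTriv}---carry the hypothesis $2n\geq 6$, so neither the extension description of $I_g^{\fr,[[\ell]]}$ nor the triviality of the derivative map on $\Theta_{2n+1}$ is available when $n=2$. (Indeed for $n=2$ the group $\Theta_5=\pi_0(\mr{Diff}_\partial(D^4))$ is not even known to be finite, and Kreck's structure theorem for $I_g$ does not apply.) The paper handles $n=2$ by a completely different argument: by \cite[Theorem~1]{kreckisotopy} the kernel of $\Gamma_g\to G'_g$ consists of diffeomorphisms pseudoisotopic to the identity, and by Quinn's stable isotopy theorem \cite[Theorem~1.4]{quinnisotopy} these become isotopic to the identity after enough stabilisations by $S^2\times S^2$, so that $H_1(\Gamma_\infty^{\fr,[[\ell]]})\to H_1(G_\infty^{\fr,[[\ell]]})$ is already an isomorphism. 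Your proof as written does not touch this, and the coinvariants mechanism cannot substitute for it.
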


In Remark \ref{rem:index-4-ident} we will determine the index 2 subgroup in part (iii).

\begin{proof}
If $n=1,3$ then $\pi_{2n}(\mr{SO}(2n))=0$ so the claims are vacuous.

By naturality of the sequence \eqref{eq:HatF} under boundary connect-sum, the connecting map ${f}_\ell$ factors over (the abelianisation of) its stabilisation, i.e.\ as
\[{\Gamma}_g^{\fr, [[\ell]]} \lra {\Gamma}_\infty^{\fr, [[\ell]]} \lra H_1({\Gamma}_\infty^{\fr, [[\ell]]}) \overset{g_\ell}\lra \pi_{2n}(\mr{SO}(2n)).\]
To prove the first part we must show that this map $g_\ell$ factors as
\[H_1({\Gamma}_\infty^{\fr, [[\ell]]}) \lra H_1({G}_\infty^{\fr, [[\ell]]}) \overset{h_\ell}\lra \pi_{2n}(\mr{SO}(2n))\]
for some (unique, as the first map is surjective) map $h_\ell$.

If $n=2$ then by \cite[Theorem 1]{kreckisotopy} the map ${\Gamma}_g^{\fr, [[\ell]]} \to {G}_g^{\fr, [[\ell]]}$ has kernel consisting of isotopy classes of diffeomorphisms which are pseudoisotopic to the identity, and by \cite[Theorem 1.4]{quinnisotopy} such a diffeomorphism becomes isotopic to the identity after sufficiently-many stabilisations by $S^2 \times S^2$. Thus these maps become isomorphisms of groups in the colimit, and so in particular $H_1({\Gamma}_\infty^{\fr, [[\ell]]}) \to H_1({G}_\infty^{\fr, [[\ell]]})$ is an isomorphism, so $g_\ell$ factors as desired.

Suppose now that $n >3$. By stabilising if necessary we may suppose that $g$ is large. By Lemma \ref{lem:ThetaActsTriv} the subgroup $\Theta_{2n+1} \leq \Gamma_g$ acts trivially on the set of framings relative to the boundary, so $\Theta_{2n+1} \leq \Gamma_g^{\fr, [\ell]} \leq \Gamma_g^{\fr, [[\ell]]}$ and is therefore annihilated by ${f}_\ell$. Now ${f}_\ell$ is a homomorphism to an abelian group, so factors over $H_1({\Gamma}_g^{\fr, [[\ell]]}/\Theta_{2n+1})$. To calculate the latter group, we use the extension
\[0 \lra \begin{cases}
0 & \text{ $n$ odd}\\
\mr{Hom}(H_n, \bZ/2) &\text{ $n$ even}
\end{cases} \lra {\Gamma}_g^{\fr, [[\ell]]}/\Theta_{2n+1} \lra G_g^{\fr,[[\ell]]} \lra 0\]
from Section \ref{sec:StabGammaDoubleBracketEll}. The Serre spectral sequence for this extension gives an exact sequence
\[\cdots\lra \left[\begin{cases}
0 & \text{ $n$ odd}\\
\mr{Hom}(H_n, \bZ/2) &\text{ $n$ even}
\end{cases}\right]_{G_g^{\fr,[[\ell]]}} \lra H_1({\Gamma}_g^{\fr, [[\ell]]}/\Theta_{2n+1}) \lra H_1(G_g^{\fr,[[\ell]]}) \lra 0\]
and it follows from \cite[Lemma A.2]{KrannichMCG} that the leftmost term is zero. Thus we have $H_1({\Gamma}_g^{\fr, [[\ell]]}/\Theta_{2n+1}) \overset{\sim}\to H_1(G_g^{\fr,[[\ell]]})$ from which the factorisation of $g_\ell$ over some $h_\ell$ follows.

Before proving (ii) and (iii), we first show that for large enough $g$ the group ${\Gamma}_g^{\fr, [[\ell]]}/{\Gamma}_g^{\fr, [\ell]}$ has order at least 4. To see this we use the quotient map to ${G}_g^{\fr, [[\ell]]}/{G}_g^{\fr, [\ell]}$. By Lemma \ref{lem:triv-fr-stable}, for $n \neq 1, 3$ the group ${G}_g^{\fr, [\ell]}$ has trivial abelianisation, so there is an induced surjection
\[{G}_g^{\fr, [[\ell]]}/{G}_g^{\fr, [\ell]} \lra H_1({G}_g^{\fr, [[\ell]]}) = \begin{cases}
\bZ/4 & \text{$n$ odd,}\\
(\bZ/2)^2 & \text{$n$ even}.
\end{cases}\]
Thus  ${\Gamma}_g^{\fr, [[\ell]]}/{\Gamma}_g^{\fr, [\ell]}$ indeed has order at least 4. By the exact sequence \eqref{eq:HatF} we have ${\Gamma}_g^{\fr, [[\ell]]}/{\Gamma}_g^{\fr, [\ell]} = \mr{im}({f}_\ell) = \mr{im}(h_\ell)$, so this group has order at least 4.

On the other hand $ H_1({G}_\infty^{\fr, [[\ell]]})$ has order precisely 4, so $h_\ell$ must be injective. If $n \not\equiv 0 \pmod 4$ and $n \neq 3$ then $\pi_{2n}(\mr{SO}(2n))$ has order 4, so $h_\ell$ must be an isomorphism.  If $n \equiv 0 \pmod 4$ then $\pi_{2n}(\mr{SO}(2n))$ has order 8, so $h_\ell$ must be injective onto an index 2 subgroup. This proves parts (ii) and (iii).
\end{proof}

This discussion allows us to describe the subgroups $G_g^{\fr, [\ell]} \leq G_g^{\fr, [[\ell]]}$, where they have the following interesting intrinsic descriptions.

\begin{corollary}\label{cor:IdentifyingSingleBracketEll}
If $n \neq 1, 3$ then $G_g^{\fr, [\ell]} = \mr{ker}(G_g^{\fr, [[\ell]]} \to H_1(G_\infty^{\fr, [[\ell]]}))$.

If $n=1, 3$ then $G_g^{\fr, [\ell]} = G_g^{\fr, [[\ell]]}$.
\end{corollary}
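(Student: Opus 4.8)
The plan is to deduce both statements formally from the exact sequence \eqref{eq:HatF}, together with the factorisation of ${f}_\ell$ already established in Proposition \ref{prop:FEllAnalysis}. No new geometric input is needed: the corollary is a diagram chase once the injectivity of $h_\ell$ is in hand.

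First I would dispose of the case $n = 1, 3$. Here Proposition \ref{prop:FEllAnalysis}(i) records that $\pi_{2n}(\mr{SO}(2n)) = 0$, so the homomorphism ${f}_\ell$ of \eqref{eq:HatF} is the zero map. By exactness of \eqref{eq:HatF} this forces ${\Gamma}_g^{\fr, [\ell]} = \ker({f}_\ell) = {\Gamma}_g^{\fr, [[\ell]]}$. Applying the restriction of the homomorphism $\Gamma_g \to G'_g$ and passing to images then gives $G_g^{\fr, [\ell]} = G_g^{\fr, [[\ell]]}$, as claimed.

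For $n \neq 1, 3$ I would argue as follows. Let $q \colon {\Gamma}_g^{\fr, [[\ell]]} \to G_g^{\fr, [[\ell]]}$ denote the restriction of $\Gamma_g \to G'_g$; by the very definition of $G_g^{\fr, [[\ell]]}$ as an image, this map is surjective. Write $\phi \colon G_g^{\fr, [[\ell]]} \to H_1(G_\infty^{\fr, [[\ell]]})$ for the composite of the abelianisation and stabilisation maps appearing in the corollary. Proposition \ref{prop:FEllAnalysis} then expresses ${f}_\ell = h_\ell \circ \phi \circ q$, where in parts (ii) and (iii) the map $h_\ell$ is \emph{injective}. By exactness of \eqref{eq:HatF} we have ${\Gamma}_g^{\fr, [\ell]} = \ker({f}_\ell)$, and since $h_\ell$ is injective this equals $\ker(\phi \circ q) = q^{-1}(\ker \phi)$.

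It then remains to compute the image. Since $G_g^{\fr, [\ell]}$ is by definition the image of ${\Gamma}_g^{\fr, [\ell]}$ under $q$, I obtain $G_g^{\fr, [\ell]} = q\big(q^{-1}(\ker \phi)\big)$; because $q$ is surjective and $\ker \phi \subseteq G_g^{\fr, [[\ell]]} = \mr{im}(q)$, the general identity $q(q^{-1}(S)) = S \cap \mr{im}(q)$ gives $G_g^{\fr, [\ell]} = \ker \phi = \ker\big(G_g^{\fr, [[\ell]]} \to H_1(G_\infty^{\fr, [[\ell]]})\big)$, which is the asserted equality. The one point to flag is that there is no genuine obstacle at this stage: all of the substance sits in the injectivity of $h_\ell$ from Proposition \ref{prop:FEllAnalysis} — which itself rests on the vanishing Lemma \ref{lem:triv-fr-stable} and the abelianisation computations of Section \ref{sec:Abs} — while the present argument is only the short chase above, relying on the surjectivity of $q$ and injectivity of $h_\ell$.
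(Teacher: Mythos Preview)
Your argument is correct and follows essentially the same route as the paper's proof: both use the factorisation of $f_\ell$ from Proposition~\ref{prop:FEllAnalysis} together with the injectivity of $h_\ell$ (for $n \neq 1,3$) to identify $\Gamma_g^{\fr,[\ell]} = \ker(f_\ell)$ with the preimage of $\ker\phi$ under the surjection $q$, then take images. The paper compresses the chase into two sentences, while you have written it out more carefully; the content is the same.
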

\begin{proof}
For $n \neq 1,3$ the kernel of the composition
\[\Gamma_g^{\fr, [[\ell]]} \lra G_g^{\fr, [[\ell]]} \lra H_1(G_\infty^{\fr, [[\ell]]})\]
is the same as the kernel $\Gamma_g^{\fr, [\ell]}$ of ${f}_\ell$ by Proposition \ref{prop:FEllAnalysis}. Thus $G_g^{\fr, [[\ell]]} \to H_1(G_\infty^{\fr, [[\ell]]})$ has kernel $G_g^{\fr, [\ell]}$. 

If $n=1, 3$ then $\pi_{2n}(\mr{SO}(2n))=0$ so $\Gamma_g^{\fr, [\ell]}=\Gamma_g^{\fr, [[\ell]]}$ and hence their images in $G'_g$ are equal too.
\end{proof}

\begin{proof}[Proof of Theorem \ref{thm:main}]
Suppose that $n \geq 2$ and $g \geq 1$. The argument will be by analysing the sequence \eqref{eq:FundSeq} using the factorisation of Proposition \ref{prop:FEllAnalysis}. This yields the exact top row of the commutative diagram 
\begin{equation*}
\begin{tikzcd} 
0 \rar &\Gamma^{\mr{fr},[\ell]}_g \rar &[-18pt] \Gamma^{\mr{fr},[[\ell]]}_g \rar{f_\ell} \dar[two heads] &[-11pt] \mr{Str}^\mr{fr}_\partial(D^{2n}) \rar \dar{\cong} &[-15pt] \mr{Str}^\mr{fr}_\partial(W_{g,1})/\Gamma_g \rar[two heads] &[-11pt] \mr{Str}_\ast^\mr{fr}(W_{g,1})/\Gamma_g \\
&	 & H_1(G^{\mr{fr},[[\ell]]}_\infty) \rar{h_\ell} & \pi_{2n}(\mr{SO}(2n)).
	\end{tikzcd}
	\end{equation*}
	
 By the calculations in Section \ref{sec:Abs} the composition
\[{\Gamma}_g^{\fr, [[\ell]]} \lra {G}_g^{\fr, [[\ell]]} \lra H_1({G}_g^{\fr, [[\ell]]}) \lra H_1({G}_\infty^{\fr, [[\ell]]}),\]
which is the left-hand vertical map in the diagram, is surjective as long as $g \geq 1$. By Proposition \ref{prop:FEllAnalysis} if $n \not \equiv 0 \pmod 4$ then the map ${f}_\ell$ is surjective; if $n \equiv 0 \pmod 4$ then it has cokernel $\bZ/2$. By Proposition \ref{prop:upper-bound} the set $\mr{Str}^\mr{fr}_\ast(W_{g,1})/\Gamma_g$, which is the right-most term in the diagram, has a single element unless $n=3,7$ in which case it has two. It follows by combining these facts that $\mr{Str}^\mr{fr}_\partial(W_{g,1})/\Gamma_g$ has a single element unless $n = 3, 7$ or $n \equiv 0 \pmod 4$ in which case it has two elements.

The case $n=1$ and $g \geq 2$ is \cite[Theorem 2.9]{RWFramed}.
\end{proof}

\section{$\theta$-structures on $W_{g,1}$}\label{sec:ThetaStr}

In \cite[Section 8]{KR-WAlg} we more generally considered tangential structures whose $\mr{GL}_{2n}(\bR)$-space $\Theta$ has the property that the homotopy quotient $B \coloneqq \Theta \sslash \mr{GL}_{2n}(\bR)$ is $n$-connected (in terms of the associated fibration $\theta \colon B \to B\mr{O}(2n)$, this means that $B$ is $n$-connected). Our results about framings can be used to also classify such $\theta$-structures on $W_{g,1}$, up to homotopy and diffeomorphisms. We will assume that $n \geq 2$.

Given a boundary condition $\ell_\partial\colon \mr{Fr} (TW_{g,1}\vert_{\partial W_{g,1}}) \to \Theta$  we let $\mr{Bun}_\partial(\mr{Fr}(TW_{g,1}),\Theta;\ell_\partial)$ be the space of $\mr{GL}_{2n}(\bR)$-maps $\mr{Fr}(TW_{g,1}) \to \Theta$ extending $\ell_\partial$. Its set of path components is denoted $\mr{Str}^\theta_\partial(W_{g,1})$. The moduli space of $W_{g,1}$'s with $\theta$-structures is defined as
\[B\mr{Diff}^\theta_\partial(W_{g,1}) \coloneqq \mr{Bun}_\partial(\mr{Fr}(TW_{g,1}),\Theta;\ell_\partial) \sslash \mr{Diff}_\partial(W_{g,1}).\]
Its set of path components is the set of orbits $\mr{Str}^\theta_\partial(W_{g,1})/\Gamma_g$. This is what we shall compute in this section, but we first make some definitions.

The boundary condition $\ell_\partial$ singles out a path component $\Theta^+$ of $\Theta$. Since $\Theta \sslash \mr{GL}_{2n}(\bR)$ is $n$-connected, the map $\pi_n(\mr{SO}(2n)) \to \pi_n(\Theta^+)$ is surjective. By Lemma \ref{lem:levine} the map $S\pi_n(\mr{SO}(n)) \to \pi_n(\mr{SO}(2n))$ is surjective unless $n=3,7$, in which case it has cokernel $\bZ/2$. This leads to two cases when $n=3,7$:
\begin{enumerate}[(A)]
	\item $S\pi_n(\mr{SO}(n)) \to \pi_n(\mr{SO}(2n)) \to \pi_n(\Theta^+)$ is not surjective (and thus has index 2),
	\item $S\pi_n(\mr{SO}(n)) \to \pi_n(\mr{SO}(2n)) \to \pi_n(\Theta^+)$ is surjective.
\end{enumerate}

We also define
\[C\pi_{2n}(\Theta^+) \coloneqq \mr{coker}\Big(H_1(G^{\mr{fr},[[\tau]]}_\infty) \xrightarrow{h_\tau} \pi_{2n}(\mr{SO}(2n)) \to \pi_{2n}(\Theta^+)\Big),\]
which seems at first sight to depend on the orbit of homotopy class $[[\tau]]$ of reference framing $\tau$ relative to $\ast$, but is in fact independent of this choice: by Proposition \ref{prop:FEllAnalysis} the map $h_\tau$ is surjective unless $n \equiv 0 \pmod 4$, but in that case the orbit of $[[\tau]]$ is unique by Proposition \ref{prop:upper-bound}.

\begin{theorem}\label{thm:theta}
Suppose that $\Theta \sslash \mr{GL}_{2n}(\bR)$ is $n$-connected. Let $g \geq 1$ and $n \geq 2$. The set of orbits for the action of the mapping class group $\Gamma_g$ on the set $\mathrm{Str}_\partial^{\theta}(W_{g,1})$ of homotopy classes of $\theta$-structures extending $\ell_\partial$ is in bijection with
	\begin{enumerate}[(i)]
		\item $C\pi_{2n}(\Theta^+)$ if $n \neq 3,7$;
		\item $C\pi_{2n}(\Theta^+) \times \bZ/2$ if $n=3,7$ and we are in case (A);
		\item $\pi_{2n}(\Theta^+)$ if $n=3,7$ and we are in case (B).
	\end{enumerate}
\end{theorem}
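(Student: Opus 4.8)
The plan is to rerun the proof of Theorem~\ref{thm:main} with the group $\mr{SO}(2n)$ replaced everywhere by $\Theta^+$, comparing the two via the orbit map. Fixing the reference framing $\tau$ trivialises $\mr{Fr}(TW_{g,1}) \cong W_{g,1}\times\mr{GL}_{2n}(\bR)$, and restriction to $W_{g,1}\times\{e\}$ identifies $\mr{Str}^\theta_\partial(W_{g,1}) \cong \pi_0(\mr{map}_\partial(W_{g,1},\Theta^+))$ and $\mr{Str}^\theta_\ast(W_{g,1}) \cong \pi_0(\mr{map}_\ast(W_{g,1},\Theta^+))$ compatibly with the $\Gamma_g$-actions, exactly as in \eqref{eq:FrTRivialised}. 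The orbit map $o\colon\mr{SO}(2n)\to\Theta^+$ at the basepoint cut out by $\ell_\partial$ is the connecting map of the fibration $\Theta^+\to\Theta\sslash\mr{GL}_{2n}(\bR)\to B\mr{GL}_{2n}(\bR)$, so the hypothesis that the base is $n$-connected gives that $o_*\colon\pi_k(\mr{SO}(2n))\to\pi_k(\Theta^+)$ is an isomorphism for $k<n$ and surjective for $k=n$. Postcomposition with $o$ then yields $\Gamma_g$-equivariant comparison maps from the framing computation to the $\theta$-computation.

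Next I would set up the $\theta$-analogues of Section~\ref{sec:Relaxing}. There is a short exact sequence with outer terms $\mr{Str}^\theta_\partial(D^{2n}) = \pi_{2n}(\Theta^+)$ and $\mr{Str}^\theta_\ast(W_{g,1}) = \mr{Hom}(H_n,\pi_n(\Theta^+))$, and the $I_g$-action on the latter factors through composition with $S\pi_n(\mr{SO}(n))\to\pi_n(\mr{SO}(2n))\xrightarrow{o_*}\pi_n(\Theta^+)$. The number of $\Gamma_g$-orbits on $\mr{Str}^\theta_\ast(W_{g,1})$ is therefore governed, just as in Propositions~\ref{prop:framings-rel-point} and~\ref{prop:upper-bound}, by surjectivity of this composite: a single rel-point orbit when $n\neq3,7$ and in case (B), and two orbits detected by an Arf invariant in case (A), where the composite has index $2$. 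Correspondingly, as in Proposition~\ref{prop:IdentifyingDoubleBracketEll}, the image $G^{\theta,[[\ell]]}_g$ of the stabiliser in $G'_g$ is all of $G'_g$ when $n\neq3,7$; it is $\mr{Sp}^q_{2g}(\bZ)$ or $\mr{Sp}^a_{2g}(\bZ)$ in case (A); and, crucially, it is the whole of $G'_g = \mr{Sp}_{2g}(\bZ)$ in case (B), since there the rel-point action is transitive.

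The main computation is the disc correction. The $\theta$-analogue of the orbit--stabiliser sequence \eqref{eq:FundSeq} shows that the fibre of $\mr{Str}^\theta_\partial(W_{g,1})/\Gamma_g\to\mr{Str}^\theta_\ast(W_{g,1})/\Gamma_g$ over a rel-point orbit $[[\ell]]$ is $\pi_{2n}(\Theta^+)/\mr{im}(f^\theta_\ell)$, where $f^\theta_\ell\colon\Gamma^{\theta,[[\ell]]}_g\to\pi_{2n}(\Theta^+)$ is the connecting map. I would then transport Proposition~\ref{prop:FEllAnalysis}: the subgroup $\Theta_{2n+1}$ is annihilated by Lemma~\ref{lem:ThetaActsTriv}, the $I_g$-contribution to the relevant first homology vanishes by the coinvariant computation of \cite[Lemma A.2]{KrannichMCG}, and naturality of $o$ identifies $f^\theta_\ell$ with the composite $\Gamma^{\theta,[[\ell]]}_g\to H_1(G^{\theta,[[\ell]]}_\infty)\xrightarrow{h_\ell}\pi_{2n}(\mr{SO}(2n))\xrightarrow{o_*}\pi_{2n}(\Theta^+)$. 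Hence $\mr{im}(f^\theta_\ell)$ is exactly the image of the composite defining $C\pi_{2n}(\Theta^+)$, so the fibre is $C\pi_{2n}(\Theta^+)$ --- except in case (B), where $G^{\theta,[[\ell]]}_\infty = \mr{Sp}_\infty(\bZ)$ has trivial abelianisation by Section~\ref{sec:Abs}, forcing $f^\theta_\ell=0$ and fibre $\pi_{2n}(\Theta^+)$. Assembling the fibres over the rel-point orbits then gives: one orbit with fibre $C\pi_{2n}(\Theta^+)$ in case (i); two orbits with fibre $C\pi_{2n}(\Theta^+)$ in case (ii), hence $C\pi_{2n}(\Theta^+)\times\bZ/2$; and one orbit with fibre $\pi_{2n}(\Theta^+)$ in case (iii).

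I expect the main obstacle to be the transport of Proposition~\ref{prop:FEllAnalysis}: one must verify that $f^\theta_\ell$ genuinely factors through $H_1(G^{\theta,[[\ell]]}_\infty)$ and that this factorisation is compatible through $o_*$ with the framing map $h_\ell$, so that the geometric vanishing of Lemma~\ref{lem:triv-fr-stable} (which enters only indirectly, via the control of $\mr{im}(f^\theta_\ell)$ by $H_1(G^{\theta,[[\ell]]}_\infty)$ rather than by a genus-$g$ group) is being applied correctly. The conceptual subtlety, easily missed, is that case (B) is precisely the situation in which the stabiliser is the full symplectic group $\mr{Sp}_{2g}(\bZ)$, whose stable abelianisation vanishes; this is what replaces $C\pi_{2n}(\Theta^+)$ by the whole of $\pi_{2n}(\Theta^+)$ and accounts for the qualitatively different answer in part (iii).
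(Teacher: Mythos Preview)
Your proposal is correct and follows essentially the same strategy as the paper's proof: analyse the rel-point orbits via the $I_g$-action, identify $G^{\theta,[[\ell]]}_g$, and compute the cokernel of $f^\theta_\ell$ in each case. The one notational wrinkle is your composite $\Gamma^{\theta,[[\ell]]}_g\to H_1(G^{\theta,[[\ell]]}_\infty)\xrightarrow{h_\ell}\pi_{2n}(\mr{SO}(2n))\xrightarrow{o_*}\pi_{2n}(\Theta^+)$: the map you call $h_\ell$ there is the \emph{framing} map $h_{\ell^{\fr}}$ of Proposition~\ref{prop:FEllAnalysis}, whose domain is $H_1(G^{\fr,[[\ell^{\fr}]]}_\infty)$, and this only makes sense because in the cases $n\neq 3,7$ and case~(A) the inclusion $\Gamma^{\fr,[[\ell^{\fr}]]}_g\to\Gamma^{\theta,[[\ell]]}_g$ is an \emph{isomorphism} (so $G^{\fr,[[\ell^{\fr}]]}_\infty = G^{\theta,[[\ell]]}_\infty$). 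The paper exploits this isomorphism directly---writing $f^\theta_\ell = o_*\circ f_{\ell^{\fr}}$ and reading off the cokernel $C\pi_{2n}(\Theta^+)$ from Proposition~\ref{prop:FEllAnalysis} without re-establishing any factorisation for $\theta$---whereas you rerun the factorisation argument (killing $\Theta_{2n+1}$, then the $I_g$-coinvariants) for $\theta$ and then invoke naturality. Both routes work; the paper's is slightly more economical since it avoids reproving the factorisation outside case~(B), which also sidesteps having to treat $n=2$ separately (recall Lemma~\ref{lem:ThetaActsTriv} needs $2n\geq 6$). Your handling of case~(B) is exactly the paper's.
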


We will explain the proof of this theorem in parallel with that of Theorem \ref{thm:main}. The definitions and results of Sections \ref{sec:framings}, \ref{sec:moduli-spaces}, and \ref{sec:Relaxing} go through for $\theta$-structures. By \cite[Lemma 8.5]{KR-WAlg}, up to homotopy there is a unique orientation preserving boundary condition $\ell_\partial$ which extends to a $\theta$-structure on all of $W_{g,1}$, which we may take to be $\ell^\tau_\partial$ coming from a reference framing $\tau$. The reference framing induces a homeomorphism
\[\mr{Bun}_\partial(\mr{Fr}(TW_{g,1}),\Theta;\ell_\partial) \cong \mr{map}_\partial(W_{g,1},\Theta).\]
Its path components will be denoted $\mr{Str}^\theta_\partial(W_{g,1})$. We can relax boundary conditions to get an exact sequence
\[\begin{tikzcd} 
0 \rar &[-12pt] \mr{Str}^\theta_\partial(D^{2n}) \dar{\cong} \rar{\circlearrowright} &[-9pt] \mr{Str}^\theta_\partial(W_{g,1}) \rar \dar{\cong} &[-9pt] \mr{Str}^\theta_\ast(W_{g,1}) \rar \dar{\cong} &[-12pt] 0 \\
0 \rar & \pi_{2n}(\Theta^+) \rar{\circlearrowright} &  \pi_0(\mr{map}_\partial(W_{g,1}, \Theta)) \rar & \mr{Hom}(H_n,\pi_n(\Theta^+)) \rar & 0.
\end{tikzcd}\]

The arguments for Proposition \ref{prop:upper-bound} go through for $\theta$-structures, giving the following:

\begin{proposition}\label{prop:upper-bound-theta} Suppose $n \geq 2$ and $g \geq 1$, then
	\begin{itemize}
		\item If $n \neq 3,7$ or we are in case (B), then $\mr{Str}^\theta_\ast(W_{g,1})/\Gamma_g$ consists of a single element.
		\item If $n=3,7$ and we are in case (A), then $\mr{Str}^\theta_\ast(W_{g,1})/\Gamma_g$ consists of two elements.
	\end{itemize}
\end{proposition}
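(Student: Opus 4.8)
The plan is to transcribe the proof of Proposition~\ref{prop:upper-bound} with $\pi_n(\mr{SO}(2n))$ replaced throughout by $\pi_n(\Theta^+)$, and with Levine's stabilisation map replaced by its composite $c \colon S\pi_n(\mr{SO}(n)) \to \pi_n(\mr{SO}(2n)) \to \pi_n(\Theta^+)$ with the $n$-connectivity surjection. For $n=2$ there is nothing to do: $\pi_2(\mr{SO}(4))=0$ surjects onto $\pi_2(\Theta^+)$, so $\mr{Str}^\theta_\ast(W_{g,1}) \cong \mr{Hom}(H_n,\pi_2(\Theta^+))=0$ is already a point. So suppose $2n \geq 6$, where Theorem~\ref{thm:kreck} is available. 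Using the reference framing $\tau$ to identify $\mr{Str}^\theta_\ast(W_{g,1}) \cong \mr{Hom}(H_n,\pi_n(\Theta^+))$ via the relaxed exact sequence displayed above, I would first analyse the $I_g$-action. Exactly as for framings, derivatives of elements of $\Theta_{2n+1} \leq I_g$ are supported in a disc near the boundary and hence act trivially (Lemma~\ref{lem:ThetaActsTriv} and Remark~\ref{rem:ThetaActsTriv}), so the action factors through $I_g/\Theta_{2n+1} = \mr{Hom}(H_n,S\pi_n(\mr{SO}(n)))$ and is by translation along the image of $c$. Since $H_n$ is free, the set of $I_g$-orbits is therefore $\mr{Hom}(H_n,\mr{coker}(c))$.

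Next I would read off $\mr{coker}(c)$. When $n \neq 3,7$, Lemma~\ref{lem:levine} makes $S\pi_n(\mr{SO}(n)) \to \pi_n(\mr{SO}(2n))$ surjective and $n$-connectivity makes $\pi_n(\mr{SO}(2n)) \to \pi_n(\Theta^+)$ surjective, so $c$ is onto and $I_g$ already acts transitively; the same holds by fiat in case~(B). This disposes of the single-orbit assertions. For $n=3,7$ in case~(A), write $\pi_n(\Theta^+) = \pi_n(\mr{SO}(2n))/N$ and set $K = \mr{im}(S\pi_n(\mr{SO}(n)) \to \pi_n(\mr{SO}(2n)))$, which has index $2$ by Lemma~\ref{lem:levine}(iii). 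Non-surjectivity of $c$ means $K+N \neq \pi_n(\mr{SO}(2n))$, which forces $N \subseteq K$ and identifies $\mr{coker}(c)$ with $\pi_n(\mr{SO}(2n))/K = \bZ/2$, the very same $\bZ/2$ as in the framed theory. Consequently the comparison map $\mr{Str}^\mr{fr}_\ast(W_{g,1}) \to \mr{Str}^\theta_\ast(W_{g,1})$ induced by the basepoint $\theta_0 \in \Theta^+$ — which under the trivialisations is postcomposition with $\pi_n(\mr{SO}(2n)) \to \pi_n(\Theta^+)$ — descends to a $\Gamma_g$-equivariant \emph{bijection} on $I_g$-orbits, both being $\mr{Hom}(H_n,\pi_n(\mr{SO}(2n))/K)$.

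With this bijection in hand the residual $\Gamma_g/I_g = G'_g = \mr{Sp}_{2g}(\bZ)$-action is literally the framed one, so composing it with Lemma~\ref{lem:QuadStr} produces a $\Gamma_g$-equivariant bijection $\mr{Str}^\theta_\ast(W_{g,1})/I_g \cong \mr{Quad}(H_n,\lambda)$, and Arf's theorem (Section~\ref{sec:ClassQuad}) then gives exactly two $\mr{Sp}_{2g}(\bZ)$-orbits, distinguished by the Arf invariant. This yields the two-element count in case~(A).

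The step I expect to be the main obstacle is the group-theoretic identification in case~(A): showing that non-surjectivity of $c$ genuinely forces $N \subseteq K$, so that the $\theta$-theoretic cokernel coincides with the framed one and the comparison map is a bijection (not merely a surjection) on $I_g$-orbits. This is where one must use that $K$ has index exactly $2$ — valid only for $n=3,7$ — together with the precise definitions of cases~(A) and~(B); the same computation is what guarantees that the count is independent of the chosen basepoint $\theta_0 \in \Theta^+$.
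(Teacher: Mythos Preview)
Your proposal is correct and follows essentially the same route as the paper, which simply asserts that ``the arguments for Proposition~\ref{prop:upper-bound} go through for $\theta$-structures'' and then records that in case~(A) one obtains $\mr{Str}^\theta_\ast(W_{g,1})/I_g \cong \mr{Quad}(H_n,\lambda)$. Your write-up is in fact more explicit than the paper at the one nontrivial point: the observation that, since $K$ has index exactly~$2$, non-surjectivity of $c$ forces $N \subseteq K$ and hence $\mr{coker}(c) = \pi_n(\mr{SO}(2n))/K \cong \bZ/2$, making the comparison map a bijection (not merely a surjection) on $I_g$-orbits and allowing direct transport of Lemma~\ref{lem:QuadStr} rather than reproving its $\theta$-analogue.
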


These arguments also give information about the stabiliser $\Gamma_g^{\theta,[[\ell]]}$ of $[[\ell]] \in \mr{Str}^\theta_\ast(W_{g,1})$, as well as its image $G^{\theta,[[\ell]]}_g$ in $G'_g$. The map
\begin{align*}
\mr{Str}^{\fr}_\ast(W_{g,1}) &\lra \mr{Str}^\theta_\ast(W_{g,1})\\
[[\ell^{\fr}]] &\longmapsto [[\ell]]
\end{align*}
associating to a framing the induced $\theta$-structure is surjective, as by assumption the map $\pi_n(\mr{SO}(2n)) \to \pi_n(\Theta^+)$ is. If $n=2$ it follows that $\mr{Str}^\theta_\ast(W_{g,1})$ is a single point; if $n \neq 3, 7$ it follows that $\mr{Str}^\theta_\ast(W_{g,1})/I_g$ is a single point; if $n=3,7$, it follows as in the proof of Proposition \ref{prop:framings-rel-point} that $\mr{Str}^\theta_\ast(W_{g,1})/I_g$ is in bijection with $\mr{Quad}(H_n, \lambda)$ in case (A) and is a single point in case (B). Thus for $n \geq 2$ and $g \geq 1$, we have
	\[G_g^{\theta, [[\ell]]} = \begin{cases} 
	\mr{Sp}_{2g}^{q\text{ or }a}(\bZ) & \text{if $n=3,7$, we are in case (A), and $\ell$ has Arf invariant $0$ or $1$,} \\
	\mr{Sp}_{2g}(\bZ) & \text{if $n = 3,7$ and we are in case (B),} \\
	\mr{Sp}_{2g}^q(\bZ) & \text{if $n$ is odd but not 3 or 7,} \\
	\mr{O}_{g,g}(\bZ) & \text{if $n$ is even.}\end{cases}\]
	Furthermore, if $n \neq 3,7$ or $n=3,7$ and we are in case (A), it follows that the natural inclusion of stabilisers $\Gamma_g^{\mr{fr},[[\ell^\mr{fr}]]} \to \Gamma_g^{\theta,[[\ell]]}$	is an isomorphism.

The analogue of the fundamental sequence \eqref{eq:FundSeq} for $\theta$-structures gives, for any framing $\ell^{\fr}$, a commutative diagram
\[\begin{tikzcd} 
 \Gamma^{\fr,[\ell^\fr]}_g \arrow[r, hook] \dar & \Gamma^{\fr,[[\ell^\fr]]}_g \rar{{f}_{\ell^\fr}} \dar{(\ast)} & \mr{Str}^\fr_{\partial}(D^{2n}) \dar \rar{- \cdot [\ell^{\fr}]} &\mr{Str}^\fr_\partial(W_{g,1})/\Gamma_g \arrow[r, two heads] \dar & \mr{Str}^\fr_{*}(W_{g,1})/\Gamma_g \dar\\
 \Gamma^{\theta,[\ell]}_g \arrow[r, hook] & \Gamma^{\theta,[[\ell]]}_g \rar{{f}_{\ell}} & \mr{Str}^\theta_{\partial}(D^{2n}) \rar{- \cdot [\ell]} &\mr{Str}^\theta_\partial(W_{g,1})/\Gamma_g \arrow[r, two heads] & \mr{Str}^\theta_{*}(W_{g,1})/\Gamma_g.
\end{tikzcd}\]

\begin{proof}[Proof of Theorem \ref{thm:theta}]
We proceed in three cases.
\vspace{1ex}

\noindent\textbf{The cases $n \neq 3,7$.} In this case the map indicated by ($\ast$) is an isomorphism and ${f}_\ell$ is determined by ${f}_{\ell^\fr}$. Using Proposition \ref{prop:FEllAnalysis}, we then identify the cokernel of ${f}_\ell$ with $C\pi_{2n}(\Theta^+)$. As $\mr{Str}^\theta_\ast(W_{g,1})/\Gamma_g$ consists of a single element, we conclude that $\mr{Str}^\theta_\partial(W_{g,1})/\Gamma_g \cong C\pi_{2n}(\Theta^+)$.
\vspace{1ex}

\noindent\textbf{Case (A).} In this case $n=3$ or $7$ and $S\pi_n(\mr{SO}(n)) \to \pi_n(\mr{SO}(2n)) \to \pi_n(\Theta)$ not surjective. Then $\mr{Str}^\theta_\ast(W_{g,1})/\Gamma_g$ consists of two elements, distinguished by an Arf invariant. Choosing framings $\ell^\mr{fr}$ with Arf invariant $0$ and $1$ respectively, we get two commutative diagrams as above. In both cases, the map indicated by ($\ast$) is an isomorphism, and as above we identify the cokernel of ${f}_\ell$ with $C\pi_{2n}(\Theta^+)$. Thus we get a collection of orbits with Arf invariant $0$ and another collection of orbits with Arf invariant $1$, each in bijection with $C\pi_{2n}(\Theta^+)$.
\vspace{1ex}

\noindent\textbf{Case (B).} In this case $n=3$ or $7$ and $S\pi_n(\mr{SO}(n)) \to \pi_n(\mr{SO}(2n)) \to \pi_n(\Theta)$ surjective. The proof of Proposition \ref{prop:FEllAnalysis} gives a factorisation
\[\begin{tikzcd} \Gamma^{\theta,[[\ell]]}_g \arrow{rr}{{f}_\ell} \dar & & \pi_{2n}(\Theta) \\
G^{\theta,[[\ell]]}_g \rar & H_1(G^{\theta,[[\ell]]}_g) \rar & H_1(G^{\theta,[[\ell]]}_\infty) \uar{h_\ell} \end{tikzcd}\]
with left map the quotient map, and bottom maps abelianisation followed by stabilisation. As $G^{\theta,[[\ell]]}_g = \mr{Sp}_{2g}(\bZ)$, the bottom-right term vanishes by the computations in Section \ref{sec:Abs}, so ${f}_\ell=0$. Combining this with the fact that $\mr{Str}^\theta_{*}(W_{g,1})/\Gamma_g$ is a single point in this case gives the claimed result.
\end{proof}

\subsection{Example: stable framings}

\emph{Stable framings} are trivialisations of the stable tangent bundle. In this case we take $\Theta = \mr{GL}_\infty(\bR)$, made into a $\mr{GL}_{2n}(\bR)$-space by stabilisation. Then $\Theta^+ = \mr{GL}^+_\infty(\bR)$, which deformation retracts onto $\mr{SO}$. The map $\pi_n(\mr{SO}(2n)) \to \pi_n(\mr{SO})$ induced by stabilisation is an isomorphism as long as $n \geq 2$, so when $n=3,7$ we are in case (A). 

\begin{lemma}
The map $f_\ell\colon \Gamma^{\mr{sfr}, [[\ell]]}_g \to \pi_{2n}(\mr{SO})$ is zero.
\end{lemma}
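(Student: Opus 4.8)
The plan is to reduce at once to the only nontrivial case and there to compare stable framings with ordinary framings. By Bott periodicity $\pi_{2n}(\mr{SO})$ vanishes unless $2n \equiv 0 \pmod 8$, that is unless $n \equiv 0 \pmod 4$; in every other case---in particular for all odd $n$ and for $n = 3, 7$ (where $2n \equiv 6 \pmod 8$)---the target is trivial and there is nothing to prove. I therefore assume $n \equiv 0 \pmod 4$, so that $\pi_{2n}(\mr{SO}) \cong \bZ/2$ and, crucially, $n$ is even with $n \neq 3, 7$.

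For such $n$ I would run the case through the comparison diagram between the fundamental sequence for framings and that for stable framings. Since $n \neq 3,7$, the discussion preceding the proof of Theorem \ref{thm:theta} shows that the left vertical map $(\ast) \colon \Gamma_g^{\fr, [[\ell^\fr]]} \to \Gamma_g^{\mr{sfr}, [[\ell]]}$ is an isomorphism, and hence that it induces an isomorphism $H_1(G_\infty^{\fr,[[\ell^\fr]]}) \cong H_1(G_\infty^{\mr{sfr},[[\ell]]})$. The vertical map on disc framings is, for stable framings, the stabilisation map $s \colon \pi_{2n}(\mr{SO}(2n)) \to \pi_{2n}(\mr{SO})$ induced by $\mr{SO}(2n) \hookrightarrow \mr{SO}$. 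Commutativity of the relevant square then identifies $f_\ell$ with $s \circ f_{\ell^\fr} \circ (\ast)^{-1}$, so it is enough to prove $s \circ f_{\ell^\fr} = 0$. Since $f_{\ell^\fr}$ factors through the map $h_{\ell^\fr} \colon H_1(G_\infty^{\fr,[[\ell^\fr]]}) \to \pi_{2n}(\mr{SO}(2n))$ of Proposition \ref{prop:FEllAnalysis}, this in turn reduces to the inclusion $\mr{im}(h_{\ell^\fr}) \subseteq \ker(s)$.

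The crux is thus to locate $\mr{im}(h_{\ell^\fr})$ inside $\pi_{2n}(\mr{SO}(2n)) \cong (\bZ/2)^3$. By Proposition \ref{prop:FEllAnalysis}(iii) it is a subgroup of index $2$, and what I need is exactly that it coincides with $\ker(s)$; this is the content of Remark \ref{rem:index-4-ident}, which pins down the index-$2$ subgroup of part (iii). Given that identification, $s \circ h_{\ell^\fr} = 0$, hence $f_\ell = 0$. I expect this identification to be the main obstacle: since $\ker(s)$ is likewise an index-$2$ subgroup of $(\bZ/2)^3$, the inclusion is equivalent to an equality of subgroups and so cannot be obtained by order considerations alone---one must understand the geometric origin of the classes in $\mr{im}(h_{\ell^\fr})$. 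A self-contained alternative would bypass Remark \ref{rem:index-4-ident} and compute $f_\ell$ on generators: by Lemma \ref{lem:h1-gg-improv-even} the stabilisation $H_1(\mr{O}_{1,1}(\bZ)) \to H_1(\mr{O}_{\infty,\infty}(\bZ))$ is surjective, so it suffices to realise the two generators of $\mr{O}_{1,1}(\bZ) \cong (\bZ/2)^2$---the swap $e \leftrightarrow f$ and the antipode $(e,f) \mapsto (-e,-f)$ on $H_n(W_{1,1};\bZ)$---by diffeomorphisms lying in the stabiliser $\Gamma_1^{\mr{sfr},[[\ell]]}$ and to verify that the induced change of \emph{stable} framing in a disc near the boundary is null-homotopic in each case. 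Either route ultimately turns on the behaviour of the unstable obstruction $f_{\ell^\fr}$ under stabilisation of the tangent bundle.
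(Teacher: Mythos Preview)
Your primary argument is circular. You reduce the lemma to the identification $\mr{im}(h_{\ell^{\fr}}) = \ker(s)$ and then invoke Remark~\ref{rem:index-4-ident} for this. But look at how that remark is proved: it deduces the identification from the commutative square whose bottom row is $h_\ell = 0$ for \emph{stable} framings---precisely the content of the present lemma. Remark~\ref{rem:index-4-ident} is a \emph{consequence} of this lemma, not an input to it; ``this argument'' in the remark refers back to the proof you are trying to write. So as it stands your proof assumes what it is meant to show.

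Your alternative sketch (realise the generators of $\mr{O}_{1,1}(\bZ)$ by explicit diffeomorphisms and check their stable-framing obstruction vanishes) is a plausible strategy, but you do not carry it out, and the verification is not obviously trivial. The paper avoids any such explicit computation by a different mechanism: rather than comparing with framings, it works directly with stable framings and exploits the \emph{framed} mapping class group $\check{\Gamma}^{\mr{sfr},\ell}_g$. The key observation is that $\check{\Gamma}^{\mr{sfr},\ell}_g$ surjects onto $\Gamma^{\mr{sfr},[\ell]}_g = \ker(f_\ell)$, so the composition
\[
H_1(\check{\Gamma}^{\mr{sfr},\ell}_\infty) \lra H_1(G^{\mr{sfr},[[\ell]]}_\infty) \overset{h_\ell}\lra \pi_{2n}(\mr{SO})
\]
is automatically zero. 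One then only needs the first map to be surjective, and this is supplied by an external result (\cite[Section~5.2]{grwabelian}) identifying $H_1(\check{\Gamma}^{\mr{sfr},\ell}_\infty)$ and showing it surjects onto $H_1(\mr{O}_{\infty,\infty}(\bZ))$. This gives $h_\ell = 0$ without ever needing to locate $\mr{im}(h_{\ell^{\fr}})$ inside $(\bZ/2)^3$.
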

\begin{proof}
The group $\pi_{2n}(\mr{SO})$ vanishes unless $n \equiv 0 \pmod 4$ in which case it is given by $\bZ/2$. In this case we claim that $h_\ell\colon H_1(G^{\mr{sfr},[[\ell]]}_\infty;\bZ) \to  \pi_{2n}(\mr{SO})$ is zero. To see this note that the composition
\[H_1(\check{\Gamma}^{\mr{sfr},\ell}_\infty;\bZ) \lra H_1(G^{\mr{sfr},[\ell]}_\infty;\bZ) \lra  H_1(G^{\mr{sfr},[[\ell]]}_\infty;\bZ) \overset{h_\ell}\lra  \pi_{2n}(\mr{SO})\]
is zero by the analogue for stable framings of the exact sequence \eqref{eq:HatF}. But $G^{\mr{sfr},[\ell]}_g = G^{\mr{sfr},[[\ell]]}_g = \mr{O}_{g,g}(\bZ)$ for $n$ even as we have discussed above, and by Section 5.2 of \cite{grwabelian} the composition
		\[\pi_{1}(\Sigma^{-2n}\mr{SO}/\mr{SO}(2n)) \cong H_1(\check{\Gamma}^{\mr{sfr},\ell}_\infty;\bZ) \lra \pi_1(\mathbf{MT}\theta_n) \cong H_1(\Gamma_\infty;\bZ) \lra H_1(\mr{O}_{\infty,\infty}(\bZ);\bZ) \]
is surjective.
\end{proof}

We conclude that there are, up to homotopy and diffeomorphism, two stable framings on $W_{g,1}$  when $n=3,7$ or $n \equiv 0 \pmod 4$, and a unique one otherwise. That is, the classification of stable framings is the same as that of framings: up to homotopy and diffeomorphism, every stable framing of $W_{g,1}$ arises from a unique framing.

\begin{remark}\label{rem:index-4-ident} 
Returning to Proposition \ref{prop:FEllAnalysis} (iii), i.e.~the case $n \equiv 0 \pmod 4$, this argument identifies the index 2 subgroup hit by the map called $h_\ell$ in that proposition, which in the notation of this section would be called $h_{\ell^{\fr}}$. It implies that this index 2 subgroup is the kernel of the stabilisation map $\pi_{2n}(\mr{SO}(2n)) \to \pi_{2n}(\mr{SO})$. To see this consider the commutative diagram
\[\begin{tikzcd}H_1(G^{\mr{fr},[[\ell]]}_\infty) \cong (\bZ/2)^2 \rar[hook]{h_{\ell^{\fr}}} \dar & \pi_{2n}(\mr{SO}(2n)) \cong (\bZ/2)^3 \dar[two heads] \\
	H_1(G^{\mr{sfr},[[\ell]]}_\infty) \rar{h_\ell = 0} & \pi_{2n}(\mr{SO}) \cong \bZ/2 \end{tikzcd}\]
	where the right-hand vertical map is surjective by Lemma \ref{lem:levine} (i).
\end{remark}

\bibliographystyle{amsplain}
\bibliography{../../cell}

\vspace{.5cm}

\end{document}